\definecolor{red}{rgb}{1,0,0}
\definecolor{green}{rgb}{0,1,0}
\definecolor{blue}{rgb}{0,0,1}
\definecolor{refkey}{gray}{.625}
\definecolor{labelkey}{gray}{.625}
 \def\title@font{\normalsize\bfseries}
 \let\ltx@maketitle\@maketitle
 \def\@maketitle{\bgroup%
 \let\ltx@title\@title%
 \def\@\title{\resizebox{\textwidth}{!}{%
  \mbox{\title@font\ltx@title}%
 }}%
 \ltx@maketitle%
 \egroup}
\theoremstyle{plain}
\newtheorem*{zorn*}{Zorn's lemma}
\newtheorem*{tychonoff*}{Tychonoff's theorem}
\newtheorem{definition}{Definition}[section]
\newtheorem{lem}[definition]{Lemma}
\newtheorem{Cor}[definition]{Corollary}
\newtheorem{Thm}[definition]{Theorem}
\newtheorem*{theorem*}{Theorem}
\newtheorem{Def}[definition]{Definition}
\newtheorem{def-prop}[definition]{Definition-Proposition}
\newtheorem{prop}[definition]{Proposition}
\newtheorem{prop-def}[definition]{Proposition-Definition}
\newtheorem{Ex}[definition]{Example}
\newtheorem{Rem}[definition]{Remark}
\DeclareMathOperator{\id}{id}
\DeclareMathOperator{\ad}{ad}
\DeclareMathOperator{\D}{\mathcal{D}}
\newcommand{\dd}{d_A}
\newcommand{\dM}{d}
\newcommand{\g}{\mathfrak{g}}
\newcommand {\emptycomment}[1]{}
\newcommand{\standardsign}{\mathrm{sgn}}
\newcommand{\wedgeA}{\Gamma(\wedge^\bullet A)}
\newcommand{\CinfM}{C^\infty(M)}
\newcommand{\secA}{\Gamma(A)}
\newcommand{\secB}{\Gamma(B)}
\newcommand{\secAd}{\Gamma(A^*)}
\newcommand{\OmegaA}{\Gamma(\wedge^\bullet A^*)}
\newcommand{\OmegaAdegree}[1]{\Gamma(\wedge^{#1} A^*)}
\newcommand{\wedgeAdegree}[1]{\Gamma(\wedge^{#1} A)}
\newcommand{\oldpsi}{\phi}
\newcommand{\Brg}[2]{[ #1 , #2 ]_{\g}}
\newcommand{\Brgd}[2]{[ #1 , #2 ]_{\g^*}}
\newcommand{\BrA}[2]{[ #1 , #2 ]_A}
\newcommand{\BrAd}[2]{[ #1 , #2 ]_*}
\newcommand{\CBr}[2]{[ #1 , #2 ]}
\newcommand{\puredegree}[1]{|#1|}
\newcommand{\bigD}{\bar{D}}
\newcommand{\XX}{\mathfrak{X}}
\newcommand{\GammaL}{\Gamma\mathcal{L}}
\newcommand{\fotimes}{\tilde{\otimes}}
\newcommand{\trace}{\mathrm{tr}}
\newcommand{\equalbyreason}[1]{\xlongequal[]{\mbox{#1}}}
\newcommand{\tobefilledin}{\,\cdot\,}
\newcommand{\R}{\mathbb{R}}
\newcommand{\threesecofA}{\tau}
\newcommand{\dstar}{d_\ast}
\begin{document}
\title{Dirac generating operators of split Courant algebroids}

\author{Liqiang Cai}
\address{School of Mathematics and Statistics, Henan University}
\email{\href{mailto:cailiqiang@vip.henu.edu.cn}{cailiqiang@vip.henu.edu.cn}}

\author{Zhuo Chen}
\address{Department of Mathematics, Tsinghua University}
\email{\href{mailto:zchen@math.tsinghua.edu.cn}{zchen@math.tsinghua.edu.cn}}

\author{Honglei Lang}
\address{College of Science, China Agricultural University} 
\email{\href{mailto:hllang@cau.edu.cn}{hllang@cau.edu.cn}}

\author{Maosong Xiang}
\address{Center for Mathematical Sciences, Huazhong University of Science and Technology}
\email{\href{mailto: msxiang@hust.edu.cn}{msxiang@hust.edu.cn}}
\thanks{Research partially supported by NSFC grants 11701146, 11901221, 11901568, 12071241.}

\begin{abstract}
Given a vector bundle $A$ over a smooth manifold $M$ such that the square root $\mathcal{L}$ of the line bundle $\wedge^{\mathrm{top}}A^\ast \otimes \wedge^{\mathrm{top}}T^\ast M$ exists,
the Clifford bundle associated to the split pseudo-Euclidean vector bundle $(E = A \oplus A^\ast, \langle \cdot, \cdot \rangle)$, admits a spinor bundle $\wedge^\bullet A \otimes \mathcal{L}$, whose section space can be thought of as that of Berezinian half-densities of the graded manifold $A^\ast[1]$.
We give an explicit construction of Dirac generating operators of split Courant algebroid (or proto-bialgebroid) structures on $A \oplus A^\ast$ introduced by Alekseev and Xu. We also prove that the square of the Dirac generating operator gives rise to an invariant of the split Courant algebroid.
\end{abstract}

\maketitle

\tableofcontents

\section*{Introduction}
The main object of this paper is split Courant algebroid, which can also be expressed as a proto-bialgebroid.
In \cite{CS}, the second author of this paper and Sti\'{e}non studied a special type of split Courant algebroids which are doubles of Lie bialgebroids. This paper presents a study of this problem in general.

The origin of studying proto-bialgebroids can be traced back to Drinfeld's work on Lie bialgebras \cite{Drinfeld1983}. After that, with his landmark ``Quantum groups'' article \cite{Drinfeld1986}, a series of follow-up studies developed rapidly.
For example,   Drinfeld  further studied quasi-Hopf algebras  that generalize the Hopf
algebras defining quantum groups, and their semi-classical limits, the Lie quasi-bialgebras \cite{Drinfeld1990}. Kosmann-Schwarzbach also introduced the notion of quasi-Poisson Lie groups  in \cites{YKS1991,YKS1992}. It turns out that the infinitesimal counterpart of a quasi-Poisson Lie group is a quasi-Lie bialgebra, which is another weak version of the Lie bialgebra structure. The more general case of proto-bialgebras (called there ``proto-Lie bialgebras'') is treated in  \cite{B-YKS1993}.

Lie bialgebras are special cases of Lie bialgebroids introduced by Mackenzie and Xu in~\cite{MX1994}, where they appeared as linearization of Poisson groupoids. Likewise, proto-bialgebras are special cases of proto-bialgebroids. According to  Kosmann-Schwarzbach~\cite{KS2005} (see also \cites{Roy1999, Roy2002}), a proto-bialgebroid is a pair $(A,A^\ast)$ of vector bundles in duality, together with a degree $3$  function $\Theta$  (known as the Hamiltonian generating function) on the $(-2)$-shifted Poisson manifold $T^\ast[2]A[1]$ satisfying the classical master equation $\{\Theta,\Theta\}=0$.
Unpacking this Hamiltonian generating function $\Theta$, we obtain the following data:
\begin{itemize}
\item Two skew-symmetric brackets $\BrA{\tobefilledin}{\tobefilledin}$, $\BrAd{\tobefilledin}{\tobefilledin}$  on $\Gamma(A)$  and  $\Gamma(A^\ast)$, respectively;
\item Two bundle maps (called anchors) $a_A\colon A \to TM$ and $a_\ast \colon A^\ast \to TM$;
\item  Two elements ${\threesecofA} \in \wedgeAdegree{3}$ and $\oldpsi \in\OmegaAdegree{3}$, which are ``fluxes'' in field theory \cite{DeserStasheff2014}.
\end{itemize}
Unpacking the classical master equation $\{\Theta,\Theta\}=0$, both $(A, \BrA{\tobefilledin}{\tobefilledin}, a_A)$ and $(A^\ast, \BrAd{\tobefilledin}{\tobefilledin}, a_\ast)$ are \textit{skew-symmetric dull algebroids} in the sense of \cite{MJ2018}, and the above data are subject to several compatibility conditions (see Definition~\ref{Def:Proto-bialgebroids}).
When both ${\threesecofA}$ and $\oldpsi$  vanish, it becomes a Lie bialgebroid $(A, A^\ast)$.
The cases  $\oldpsi=0$ or $\threesecofA=0$ correspond to Lie quasi-bialgebroids or quasi-Lie bialgebroids, respectively.

The notion of Courant algebroids also originates from Drinfeld's observation \cite{Drinfeld1986} that the direct sum $\g\oplus \g^\ast$ (called the Drinfeld double) of a Lie bialgebra $(\g,\g^\ast)$ is a canonical quadratic Lie algebra.
Extending the construction of the Drinfeld double of a Lie bialgebra to the
case of a Lie bialgebroid $(A,A^\ast)$ is a non-trivial problem. One of the solutions is
provided by Liu, Weinstein and Xu \cite{LWX} in terms of the \textit{Courant algebroid} structure on $A\oplus A^\ast$.
Roughly speaking, a Courant algebroid consists of a pseudo-Euclidean vector bundle $(E, \langle\cdot,\cdot\rangle)$ over a smooth manifold $M$, a Leibniz bracket $\circ\colon \Gamma(E)\times \Gamma(E)\to \Gamma(E)$ (known as the Dorfman bracket), and a map $\rho\colon E\to TM$ (called the anchor) satisfying several compatible conditions (see Definition \ref{Def:CA-Dorfman bracket}).

Proto-bialgebroids can also be interpreted as \textit{split Courant algebroids}.
Given a proto-bialgebroid $(A,A^\ast)$, one can obtain a Courant algebroid structure on $A\oplus A^\ast$ (see Section~\ref{Sec:Pre} for detail).
Conversely, to get a proto-bialgebroid out of a Courant algebroid $E$, one needs an additional assumption --- the pseudo-Euclidean vector bundle $E$ decomposes as the direct sum of two
transverse Lagrangian sub-bundles. In other words, when the Courant structure is defined on the Whitney sum $E=A\oplus A^\ast$ of a vector bundle $A$ and its dual, where $A$  and $A^\ast$ are both co-isotropic subbundles in $E$, we obtain a split Courant algebroid, and thus a proto-bialgebroid structure on $(A,A^\ast)$.
In particular, the Courant algebroid structure on $E$ can be the double of a Lie bialgebroid, a Lie quasi-bialgebroid, or a quasi-Lie bialgebroid.

Various attempts have been made to understand Courant algebroids.
One method is due to Weinstein, \u{S}evera, and Roytenberg \cites{Severaletter, Roy2001}  --- a Courant algebroid can be described as a degree $2$ symplectic graded manifold together with a degree $3$  Hamiltonian generating function $\Theta$ satisfying $\{\Theta,\Theta\} = 0$, where $\{\cdot,\cdot\}$ is the graded Poisson bracket induced from the graded symplectic structure. This graded Poisson bracket is called big bracket in~\cite{KS2005}. The anchor map and the Dorfman bracket of the Courant algebroid $E$ are recovered as derived brackets.

Around the same time,  in an unpublished manuscript \cite{AX2001manu},  motivated by an earlier work of Cabras and Vinogradov \cite{CabrasVinogradov}, Alekseev and Xu approached Courant algebroids in terms of \textit{Dirac generating operators}, an analogue of Kostant's cubic Dirac operators \cite{Kostant1999}. Here is a quick sketch --- Let $E\to M$ be a vector bundle endowed with a fiberwise nondegenerate pseudo-metric $\langle \cdot, \cdot\rangle$, and let $\mathcal{C}(E)$ be the associated bundle of \textit{Clifford algebras}. Assume that there exists a bundle of Clifford modules $S$ over the same base manifold $M$, that is, a vector bundle whose fibers are Clifford modules over fibers of $\mathcal{C}(E)$. The natural $\mathbb{Z}_2$-grading of $\Gamma(\mathcal{C}(E))$ induces a $\mathbb{Z}_2$-grading on the operators on $S$.  For example,  the multiplication by a function $f\in \CinfM$ is an even operator, while the Clifford action of a section $e\in \Gamma(E)$ is odd.  A Dirac generating operator  is an odd operator $D$ on $\Gamma(S)$ satisfying the following properties (here and below, $[\cdot,\cdot]$ stands for the graded commutator on the space of graded operators on $\Gamma(S)$):
\begin{itemize}
\item For all $f\in \CinfM$, the operator $[D, f]$ is the Clifford action of some section of $E$.
\item For all $e_1,e_2\in \Gamma(E)$, the operator $[[D, e_1], e_2]$ is the Clifford action of some section of $E$.
\item The square of $D$ is the multiplication by some function on M.
\end{itemize}
From a Dirac generating operator $D$,  the \textit{derived bracket} $e_1\circ e_2 = [[D, e_1], e_2]$ on $\Gamma(E)$ together with the anchor map $\rho \colon E\to TM$ given by $\rho(e)f=2\langle[D, f], e\rangle$,  define a Courant algebroid structure on $E$.
Conversely,
it is proved \emph{loc.cit} that for a general Courant algebroid $E$ there exists a  Dirac generating operator, acting on a certain spinor bundle of $(E, \langle\cdot,\cdot\rangle)$, which plays exactly the same role as the de Rham differential operator does in the Cabras-Vinogradov's approach to the standard Courant algebroid.

Part of the motivation behind this work is to better understand the Dirac generating
operators of Courant algebroids associated to proto-bialgebroids.
As stated earlier that,  the proto-bialgebroid structure on $(A, A^\ast)$ can be encoded in a certain Hamiltonian function $\Theta$ satisfying the classical master equation $\{\Theta, \Theta\} = 0$. According to~\cites{KS2005, Roy1999, Roy2002}, the function $\Theta$ is the sum of four homogeneous terms
 \[
\Theta=\dd+d_\ast+\threesecofA+\oldpsi\in C^\infty(T^\ast[2]A[1])
\]
 where $\dd$ and $d_\ast$ correspond to the skew-symmetric dull algebroid structure on $A$ and $A^\ast$, respectively. The main purpose of this paper is to prove a \textit{quantum
analog} of this condition.

Here is an outline of our results. Given a rank $n$ vector bundle $A$ over an $m$-dimensional smooth manifold $M$, we consider the vector bundle $E=A\oplus A^\ast$ which is equipped with the standard pseudo-metric (see \eqref{Eqt:standardmetricsplitcase}). The spin module we take is $S=\wedge^\bullet A \otimes (\wedge^nA^\ast \otimes\wedge^mT^\ast M)^{1/2}$,  which can be thought of as the space of Berezinian half densities on the graded manifold $A^\ast[1]$.
Given a pair of skew-symmetric dull algebroids $A$ and $A^\ast$, and two elements ${\threesecofA} \in\wedgeAdegree{3}$ and $\oldpsi \in\OmegaAdegree{3}$, we introduce an operator
\begin{eqnarray*}
\bigD:=\breve{{\dstar}}+\breve{\partial}+{\threesecofA}-\iota_{\oldpsi} \colon \Gamma(S) \rightarrow  \Gamma(S).
\end{eqnarray*}
Here $\breve{{\dstar}}$ and $\breve{\partial}$  come from the skew-symmetric dull algebroid structure on $A^\ast$ and $A$, respectively, similar to how we define a Batalin-Vilkovisky operator.
For detailed explanation of the symbols, see Section \ref{Sec:firstmainthmsection}. The operator $\bigD$ actually comes from a formula invented by Kosmann-Schwarzbach  in \cite{KS2005},  called a \textit{deriving operator} therein.
Our first main theorem (Theorem \ref{MAIN THM}) declares the following equivalence of facts:
\[
(A, A^\ast)~\mbox{forms a proto-bialgebroid} ~\Leftrightarrow~  \bigD^2\in \CinfM ~\Leftrightarrow~ \bigD \mbox{ is a Dirac generating operator}.
\]
Recently, Gr\"{u}tzmann, Michel, and Xu~\cite{GMX} studied Weyl quantization of degree 2 symplectic graded manifolds. Given a pseudo-Euclidean vector bundle $(E, \langle \cdot,\cdot \rangle)$ over $M$, each metric connection $\nabla$ on $E$ determines a degree 2 symplectic graded manifold $(T^\ast[2]M \oplus E[1], \omega_\nabla)$. They proved that the Weyl quantization of this degree $2$ symplectic graded manifold establishes a bijection between Hamiltonian generating functions and \textit{skew-symmetric} Dirac generating operators.  By considering the square of the unique skew-symmetric Dirac generating operator, they also obtain a new Courant algebroid invariant. This new invariant, as a function on the base manifold, is a natural extension of the square norm of the Cartan 3-form of a quadratic Lie algebra.

In our second main result (Theorem \ref{f-D2}), we give a specific expression of the Dirac generating operator $\bigD$ of a proto-bialgebroid $(A, A^\ast)$,
and prove directly that the square $\bigD^2$ of $\bigD$
is indeed the invariant of the proto-bialgebroid $(A,A^\ast)$ (without Weyl quantization).
When reduced to the case $\threesecofA=0$ and $\oldpsi=0$, our results recover the conclusions in~\cite{CS} regarding Dirac generating operators for Lie bialgebroids.

The derived brackets of the Courant algebroids  and more generally, metric algebroids \cite{Vaisman2012},  play an important role in the generalized complex geometry developed by Hitchin \cite{Hitchin1999} and Gualtieri \cite{Gualtieri2007}, and double field theory \cites{DeserStasheff2014,DFT-MSS}, where many remarkable results have been established. We hope our results will be of some use in this subject.
Notably, it is shown in the papers \cites{A-Costa2020,FegierZambon2015} that each split Courant algebroid $E=A\oplus A^\ast$ corresponds to a multiplicative curved $L_\infty$-algebra structure on $\Gamma(\wedge^\bullet A)[2]$.

The paper is organized as follows. Section \ref{Sec:Pre} gives a succinct account of standard facts about Courant algebroids, proto-bialgebroids and Dirac generating operators. The differential operator $\bigD$ is defined in Section \ref{Sec:Dirac4split} and the main theorems are then stated without proofs. Section \ref{Sec:Proofs} is devoted to prove
the statements in Section \ref{Sec:Dirac4split}. Our results are then particularized to a few concrete situations in Section \ref{Sec:examples}.

\subsection*{Conventions, terminologies and notations}
\smallskip
\begin{enumerate}
	\item \textit{The manifold $M$, the ring $ \CinfM$, and space of vector fields $\XX(M)$.} We only work with real smooth manifolds, say $M$.
The symbol $\CinfM$ denotes the algebra of real valued smooth functions on $M$;
and $\XX(M):=\Gamma(TM)$ denotes the space of vector fields.
	\item \textit{The tensor product} $\fotimes$ stands for $\otimes_{\CinfM}$.
	\item \textit{Vector bundle $A \rightarrow M$.} A vector bundle $A \rightarrow M$ means a real vector bundle of finite rank. Denote by $A^\ast$ the dual vector bundle of $A$.
	\item  \textit{Skew-symmetric dull algebroids $A$ and $A^\ast$.} The terminology of skew-symmetric  dull algebroids is introduced in \cites{MJ2018}, which refers to a triple  $(A,\BrA{\tobefilledin}{\tobefilledin},a_A)$  consisting of the following data
	\begin{itemize}
		\item a vector bundle $A\rightarrow M$;
		\item a bundle map $a_A\colon \secA\rightarrow\XX(M)$, called the anchor;
		\item an $\R$-bilinear and skew-symmetric bracket $\BrA{\tobefilledin}{\tobefilledin} \colon \secA\times\secA\rightarrow\secA$
	\end{itemize}
	satisfying  the Leibniz rule
	\begin{eqnarray*}
		~\BrA{x}{fy}=f\BrA{x}{y}+a_A(x)(f)\, y,
	\end{eqnarray*}
	for all $x,y\in\secA$ and $f\in \CinfM$.
	Moreover, if the Jacobi identity for $\BrA{\tobefilledin}{\tobefilledin}$ holds (that is $\BrA{\tobefilledin}{\tobefilledin}$ is a Lie bracket),
	then the skew-symmetric dull algebroid $(A,\BrA{\tobefilledin}{\tobefilledin},a_A)$ is a Lie algebroid.
	
	Similarly,   $(A^\ast,\BrAd{\tobefilledin}{\tobefilledin},a_\ast)$ denotes a skew-symmetric dull algebroid whose underlying vector bundle is dual to $A$.  We do not presume any compatibility conditions between $(A,\BrA{\tobefilledin}{\tobefilledin},a_A)$ and $(A^\ast, \BrAd{\tobefilledin}{\tobefilledin}, a_\ast)$ unless otherwise specified.
	\item \textit{The derivations $\dd$ and ${\dstar}$.} Given a skew-symmetric dull algebroid $(A,\BrA{\tobefilledin}{\tobefilledin},a_A)$, it induces a derivation
	\[
	\dd\colon  \OmegaAdegree{\bullet} \to \OmegaAdegree{\bullet+1}
   \]
   by
	\begin{align*}
		({\dd}\omega)(x_0,x_1,\ldots,x_n) &:= \sum_{i=0}^n (-1)^i a_A(x_i)
\big(\omega(x_0,x_1,\ldots,\hat{x_i},\ldots,x_n)\big)\\
		&\qquad+\sum_{0\leqslant i< j\leqslant n} (-1)^{i+j} \omega(\BrA{x_i}{x_j},x_0,x_1,\ldots,\hat{x_i},\ldots,\hat{x_j},\ldots,x_n),
	\end{align*}
for all $\omega\in \OmegaAdegree{n}$ and $x_0,x_1,\ldots, x_n\in\secA$.
It follows that $\dd^2=0$ if and only if $(A,\BrA{\tobefilledin}{\tobefilledin},a_A)$ is a Lie algebroid.
The derivation of the skew-symmetric dull algebroid $(A^\ast,\BrAd{\tobefilledin}{\tobefilledin},a_\ast)$ is denoted by
\[
{\dstar}\colon \wedgeAdegree{\bullet} \rightarrow \wedgeAdegree{\bullet+1}.
\]
\item  \textit{Pairings $ \langle \tobefilledin|\tobefilledin\rangle $ and $\langle \tobefilledin,\tobefilledin\rangle $ of $A\oplus A^\ast$.}
For any $\xi\in \secAd$, denote by $\iota_{\xi}\colon \wedgeAdegree{\bullet} \rightarrow \wedgeAdegree{\bullet-1}$ the standard contraction defined by
\[
(\iota_{\xi}r)(\eta_1,\eta_2,\ldots,\eta_{n-1}) :=r(\xi,\eta_1,\eta_2,\ldots,\eta_{n-1}),
	\quad \forall r\in \wedgeAdegree{n}, \eta_1,\ldots,\eta_{n-1}\in\secAd.
\]
For $\xi_1\wedge\xi_2\wedge\ldots\wedge\xi_n\in \OmegaAdegree{n}$, we define
\[
\iota_{\xi_1\wedge\xi_2\wedge\ldots\wedge\xi_n} :=\iota_{\xi_n}\circ \iota_{\xi_{n-1}}\circ \cdots\circ \iota_{\xi_1} \colon \wedgeAdegree{\bullet} \rightarrow \wedgeAdegree{\bullet-n}.
\]
Similarly, for all $x,x_1,\cdots, x_n \in \secA$, we have contractions $\iota_x \colon \OmegaAdegree{\bullet} \rightarrow \OmegaAdegree{\bullet-1}$ and
\[
\iota_{x_1\wedge x_2\wedge\ldots\wedge x_n}
	:=\iota_{x_n}\circ\iota_{x_{n-1}}\circ\cdots\circ\iota_{x_1}  \colon \Gamma(\wedge^\bullet A^\ast) \to  \Gamma(\wedge^{\bullet-n} A^\ast).
\]
We make the following agreement:
	\begin{eqnarray*}
		\langle \xi_1 \wedge \ldots \wedge \xi_n| x_1 \wedge \ldots \wedge x_n \rangle
		&:=& \iota_{x_n} \iota_{x_{n-1}}\cdots \iota_{x_1} (\xi_1 \wedge \ldots \wedge \xi_n)\\
		&:=& \iota_{\xi_n} \iota_{\xi_{n-1}}\cdots \iota_{\xi_1} (x_1 \wedge \ldots \wedge x_n)\\
		&=&\sum_{\sigma \in S_n} \standardsign(\sigma)
		  \xi_1 ( x_{\sigma(1)} )\xi_2 ( x_{\sigma(2)} ) \cdots   \xi_n ( x_{\sigma(n)} ).
	\end{eqnarray*}
There are two pairings on $A\oplus A^\ast$ given by for all $x+\xi,y+\eta\in\Gamma(A\oplus A^\ast)$,
	\begin{eqnarray*}
		\langle x+\xi|y+\eta\rangle &:=&\xi(y)+\eta(x),	\\
		\mbox{ and }\quad
		\langle x+\xi,y+\eta\rangle &:=& \frac{1}{2}\langle x+\xi|y+\eta\rangle = \frac{1}{2}\xi(y)+\frac{1}{2}\eta(x).
	\end{eqnarray*}
	 We refer to the second one as the \textit{standard pseudo-metric} on $A\oplus A^\ast$.
\item \textit{The elements ${\threesecofA}$ and $\oldpsi$.}
Throughout this paper, the symbol ${\threesecofA}$ stands for an element in $\wedgeAdegree{3}$; while   $\oldpsi$ stands for an element in $ \OmegaAdegree{3}$.
\item \textit{Einstein convention} is adopted throughout the paper: $a^ib_i=\sum_i a^ib_i$.	
\end{enumerate}
\paragraph{\bf Acknowledgements}
{We would like to thank Zhangju Liu,  Mathieu Sti\'enon, and  Ping Xu   for fruitful discussions and useful comments.
 }

\section{Preliminaries}\label{Sec:Pre}
In this preliminary section, we make a succinct introduction to split Courant algebroids, proto-bialgebroids, and Dirac generating operators.
\subsection{Split Courant algebroids and proto-bialgebroids}
The notion of Courant algebroid was first introduced in \cite{LWX}.
\begin{Def}\label{Def:CA-Dorfman bracket}
A Courant algebroid is a vector bundle $E \rightarrow M$ equipped with three structures: (1)
a pseudo-metric on $E$, i.e., a nondegenerate symmetric bilinear form $\langle \tobefilledin,\tobefilledin\rangle$ on $\Gamma(E)$; (2) a bilinear operation $\circ$ on $\Gamma(E)$ called Dorfman bracket; and (3) a bundle map $\rho\colon E \rightarrow TM$ called anchor. These structure maps are subject to the following axioms:
	\begin{compactenum}
		\item  $e_1 \circ (e_2 \circ e_3)=(e_1 \circ e_2) \circ e_3+e_2 \circ (e_1 \circ e_3), \;\forall e_1,e_2,e_3\in\Gamma(E)$;
		\item $\rho(e_1 \circ e_2)=[\rho(e_1),\rho(e_2)], ~\forall e_1,e_2\in\Gamma(E);$
		\item $e_1 \circ (f e_2) =f(e_1 \circ e_2)+\rho(e_1)(f) \, e_2, ~\forall e_1,e_2\in\Gamma(E), f\in \CinfM;$
		\item $e \circ e=\D\langle e,e\rangle, ~\forall e\in\Gamma(E);$
		\item $\rho(e) \, \langle h_1,h_2\rangle=\langle e \circ h_1,h_2\rangle+\langle h_1,e \circ h_2\rangle, ~\forall e,h_1,h_2\in\Gamma(E)$,
	\end{compactenum}
	where $\D\colon \CinfM\rightarrow \Gamma(E)$ is defined by\footnote{Note that the definition of   $\D$ in different literature may differ by a constant multiple.}
 \begin{equation*}\label{Eqt:Df}
		\langle \D(f),e\rangle=\frac{1}{2}\rho(e)(f).
\end{equation*}
\end{Def}
This paper is devoted to study a particular type of Courant algebroids commonly known as  split Courant algebroids. In fact, they are equivalent to the objects of proto-bialgebroids. Let us firstly clarify this correspondence.

A pseudo-Euclidean vector bundle  is called \textbf{split}, if it is isomorphic to the Whitney sum $A \oplus A^\ast$ for some vector bundle $A \to M$, equipped with the standard pseudo-metric given by
\begin{equation}\label{Eqt:standardmetricsplitcase}
\langle x+\xi, y+\eta \rangle := \frac{1}{2}\langle x+\xi|y+\eta\rangle
=\frac{1}{2}\xi(y)+\frac{1}{2}\eta(x), \quad\forall x+\xi,y+\eta\in \Gamma(A\oplus A^\ast).
\end{equation}
A Courant algebroid is called split, if its underlying pseudo-Euclidean vector bundle is split.

Assume that $E = A \oplus A^\ast$ is a split Courant algebroid. Then the anchor $\rho \colon E \to TM$ is decomposed into $a_A\colon A\to TM$ and $a_{\ast}\colon A^\ast\to TM$ by
\begin{equation*}
	\rho(x+\xi)=a_A(x)+a_{\ast}(\xi),
\end{equation*}
for all $x \in\secA$ and $\xi \in\secAd$.
The restriction of the Dorfman bracket on $\Gamma(A)$ determines a skew-symmetric dull algebroid $(A, \BrA{\tobefilledin}{\tobefilledin}, a_A)$ and an element $\oldpsi\in\OmegaAdegree{3}$ by
\[
x\circ y=\BrA{x}{y}-\iota_y\iota_{x}\oldpsi,
\]
for all $x,y\in\secA$.
Similarly, the restriction of the Dorfman bracket on $\Gamma(A^\ast)$ determines  skew-symmetric dull algebroid $(A^\ast,\BrAd{\tobefilledin}{\tobefilledin}, a_\ast)$ and an element  ${\threesecofA} \in \wedgeAdegree{3}$ by
\[
\xi\circ \eta=-\iota_\eta\iota_{\xi}{\threesecofA}+\BrAd{\xi}{\eta},
\]
for all $\xi,\eta\in\secAd$. The Dorfman bracket $\circ$ on $\Gamma(A \oplus A^\ast)$ takes the form
\begin{eqnarray}\label{Eqn:Dorfmanbracket}
	(x+\xi)\circ(y+\eta)
	&=(\BrA{x}{y}+L_\xi y -\iota_\eta ({\dstar}(x)) -\iota_\eta\iota_{\xi}{\threesecofA})
+(\BrAd{\xi}{\eta}+L_x\eta-\iota_y(\dd(\xi))-\iota_y\iota_{x}\oldpsi ).
\end{eqnarray}
The pair $(A, A^\ast)$ of skew-symmetric dull algebroids is subject to several compatibility conditions induced from the axioms of the Courant algebroid $A\oplus A^\ast$. (See~\cite{Roy1999} for details and also~\cite{KS2005} in terms of big bracket.)
These conditions are exactly the axioms of proto-bialgebroids, which we summarize below.
\begin{Def}[\cites{Roy1999, KS2005}]\label{Def:Proto-bialgebroids}
A proto-bialgebroid consists of the following data:
\begin{itemize}
\item two skew-symmetric dull algebroids in duality, $(A,\BrA{\tobefilledin}{\tobefilledin},a_A)$ and $(A^\ast, \BrAd{\tobefilledin}{\tobefilledin}, a_\ast)$,
		\item an element ${\threesecofA} \in\wedgeAdegree{3}$,
		\item an element $\oldpsi \in\OmegaAdegree{3}$.
	\end{itemize}
	They are subject to the following axioms: for all $x,y,z\in\secA$, $\xi,\eta,\chi\in\secAd$,
	\begin{compactenum}
		\item The Jacobi identity of $\BrA{\tobefilledin}{\tobefilledin}$ is controlled by $\oldpsi$ and ${\dstar}$, i.e.,
		\begin{eqnarray*}\label{Jacobi of A1}
			\BrA{\BrA{x}{y}}{z} +\BrA{\BrA{y}{z}}{x} +\BrA{\BrA{z}{x}}{y} ={\dstar}\big(\oldpsi(x,y,z)\big)
			+\iota_{\oldpsi}\big({\dstar} (x\wedge y\wedge z)\big),
		\end{eqnarray*}
where the map  ${\dstar}\colon \wedgeAdegree{\bullet} \rightarrow \wedgeAdegree{\bullet+1}$ is the derivation arising from the skew-symmetric dull algebroid $(A^\ast,\BrAd{\tobefilledin}{\tobefilledin}, a_\ast)$.
\item The Jacobi identity of $\BrAd{\tobefilledin}{\tobefilledin}$ is controlled by ${\threesecofA}$ and $\dd$, i.e.,
		\begin{eqnarray*}
			\BrAd{\BrAd{\xi}{\eta}}{\chi}+\BrAd{\BrAd{\eta}{\chi}}{\xi}+\BrAd{\BrAd{\chi}{\xi}}{\eta}
			=\dd\big({\threesecofA}(\xi,\eta,\chi)\big) +\iota_{{\threesecofA}} \big(\dd(\xi\wedge\eta\wedge \chi)\big),
		\end{eqnarray*}		
where the map $\dd\colon \OmegaAdegree{\bullet} \rightarrow \OmegaAdegree{\bullet+1}$ is the derivation arising from the skew-symmetric dull algebroid  $(A, \BrA{\tobefilledin}{\tobefilledin}, a_A)$.		
		\item The skew-symmetric dull algebroids
		$(A,\BrA{\tobefilledin}{\tobefilledin},a_A)$ and $(A^*,\BrAd{\tobefilledin}{\tobefilledin},a_*)$ are compatible in the sense that
		\begin{eqnarray*}
	{\dstar}(\BrA{x}{y}) =\BrA{{\dstar}(x)}{y} +\BrA{x}{{\dstar}(y)} +\iota_{(\iota_y\iota_x\oldpsi)}{\threesecofA} ;
		\end{eqnarray*}
		\item The element $\oldpsi\in \OmegaAdegree{3}$ is $\dd$-closed, i.e., $\dd(\oldpsi) =0$.
		\item The element ${\threesecofA}\in\wedgeAdegree{3}$ is ${\dstar}$-closed, i.e., ${\dstar}({\threesecofA}) =0$.
\end{compactenum}
Such a proto-bialgebroid will be denoted by $(A,\BrA{\tobefilledin}{\tobefilledin}, \BrAd{\tobefilledin}{\tobefilledin}, a_A, a_{\ast},{\threesecofA},\oldpsi)$.
\end{Def}
\begin{Rem}
Assume that $(A,\BrA{\tobefilledin}{\tobefilledin}, \BrAd{\tobefilledin}{\tobefilledin}, a_A, a_{\ast}, {\threesecofA},\oldpsi )$ is a proto-bialgebroid.
\begin{itemize}
\item If ${\threesecofA} \in \wedgeAdegree{3}$ vanishes, then the triple $(A^\ast, \BrAd{\tobefilledin}{\tobefilledin}, a_{\ast})$ is a Lie algebroid and the six-tuple $(A, \BrA{\tobefilledin}{\tobefilledin}, \BrAd{\tobefilledin}{\tobefilledin},a_A,a_{\ast}, \oldpsi )$ is known as a quasi-Lie bialgebroid.
\item If $\oldpsi\in \OmegaAdegree{3}$ vanishes, then the triple $(A,\BrA{\tobefilledin}{\tobefilledin}, a_A)$ is a Lie algebroid and the six-tuple $(A,\BrA{\tobefilledin}{\tobefilledin}, \BrAd{\tobefilledin}{\tobefilledin}, a_A, a_{\ast}, {\threesecofA} )$ is called a Lie quasi-bialgebroid.
\item If both $\threesecofA$ and $\oldpsi$ vanish, then $A$ and $A^\ast$ form a Lie bialgebroid.
\end{itemize}
\end{Rem}

\subsection{Dirac generating operators}
We now briefly recall the approach to Courant algebroids via Dirac generating operators~\cites{AX2001manu,GMX}.
Given a pseudo-Euclidean vector bundle $(E, \langle \cdot, \cdot \rangle)$ over $M$,
let $\mathcal{C}(E)\rightarrow M$ be the associated bundle of Clifford algebras with the generating relation $e_1\otimes e_2+e_2\otimes e_1 = 2\langle e_1,e_2\rangle$, for all $p \in M$ and all $e_1,e_2\in\mathcal{C}(E)_p$.
Assume that there exists a smooth vector bundle $S\rightarrow M$ whose fiber $S_p$ over every point $p\in M$ is the spin module of the Clifford algebra $\mathcal{C}(E)_p$. Assume further that $S$ is $\mathbb{Z}_2$-graded, i.e., $S=S^0\oplus S^1$.
An operator $D$ on $\Gamma(S)$ is said to be even (resp. odd) if $D(S^i)\subset S^i$ (resp. $D(S^i)\subset S^{i+1}$). Here $i \in \mathbb{Z}_2$.
If $D_1$ and $D_2$ are operators of degree $i_1$ and $i_2$, respectively, then their graded commutator is given by $\CBr{D_1}{D_2}=D_1\circ D_2-(-1)^{i_1i_2}D_2\circ D_1$.
\begin{Def}[\cite{AX2001manu}]\label{Def:DGO}
A Dirac generating operator for a pseudo-Euclidean vector bundle $(E,\langle\tobefilledin,\tobefilledin\rangle)$ is an odd operator $D$ on $\Gamma(S)$ satisfying the following conditions.
\begin{compactenum}[$(a)$]
\item For all $f\in \CinfM$, we have $\CBr{D}{f}\in\Gamma(E)$. This means that the operator $\CBr{D}{f}$ is the Clifford action of some section of $E$ on $\Gamma(S)$.
\item For all $e_1,e_2\in\Gamma(E)$, we have $\CBr{\CBr{D}{e_1}}{e_2}\in\Gamma(E)$.
\item The square of $D$ is the multiplication by some smooth function on $M$, i.e., $D^2\in \CinfM$.
\end{compactenum}
\end{Def}
\begin{Thm}[\cite{AX2001manu}]
Let $D$ be a Dirac generating operator for a pseudo-Euclidean vector bundle $(E,\langle\tobefilledin,\tobefilledin\rangle)$ over $M$. Then there is a canonical Courant algebroid structure on $E$,  whose anchor and Dorfman bracket are defined respectively by
		\begin{eqnarray*}
			\rho(e)(f)&=&\CBr{\CBr{D}{f}}{e}, \\
			\quad\mbox{ and }\quad
			e_1\circ e_2&=&\CBr{\CBr{D}{e_1}}{e_2},
		\end{eqnarray*}
where $e, e_1, e_2 \in \Gamma(E)$ and $f\in \CinfM$.
\end{Thm}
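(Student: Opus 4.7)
The plan is to verify the five axioms of Definition~\ref{Def:CA-Dorfman bracket} from conditions $(a)$--$(c)$, using only the graded Jacobi identity for the $\mathbb{Z}_2$-graded commutator of operators on $\Gamma(S)$ and the Clifford relation $\CBr{e_1}{e_2}=2\langle e_1,e_2\rangle$ for $e_1,e_2\in\Gamma(E)$. The proof turns on a single substantive calculation, a ``central identity'' stated below, from which the deep axioms follow, the remaining axioms being short.

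First I would set up three workhorse identities. Applying graded Jacobi to the triple $(D,D,X)$ gives $\CBr{D}{\CBr{D}{X}}=\CBr{D^2}{X}$; since $D^2\in\CinfM$ is central against Clifford multiplication and function multiplication, this bracket vanishes whenever $X$ is a section of $E$ or a smooth function. One further application of graded Jacobi to $\CBr{D}{\CBr{\CBr{D}{e_1}}{e_2}}$ then yields the central identity
\[
\CBr{D}{e_1\circ e_2}=\CBr{\CBr{D}{e_1}}{\CBr{D}{e_2}}.
\]
Second, since any function graded-commutes with an element of $\Gamma(E)$, graded Jacobi yields the alternative anchor formula $\CBr{\CBr{D}{e}}{f}=\CBr{e}{\CBr{D}{f}}=2\langle\CBr{D}{f},e\rangle=\rho(e)(f)$. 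Third, comparing $2\langle\CBr{D}{f},e\rangle=\rho(e)(f)$ with the defining relation of $\D$ and using non-degeneracy of $\langle\tobefilledin,\tobefilledin\rangle$ identifies $\CBr{D}{f}=\D(f)$ for every $f\in\CinfM$.

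I would then dispatch the Courant algebroid axioms in the order $(3), (4), (5), (1), (2)$. Axiom $(3)$ is immediate from the graded Leibniz rule for the even operator $\CBr{D}{e_1}$ combined with the alternative anchor formula. Axiom $(4)$ follows from graded Jacobi applied to $(D,e,e)$ together with $\CBr{e}{e}=2\langle e,e\rangle$, giving $e\circ e=\CBr{D}{\langle e,e\rangle}=\D\langle e,e\rangle$. Axiom $(5)$ is obtained by expanding $\CBr{\CBr{D}{e}}{h_1 h_2+h_2 h_1}$ via the graded Leibniz rule and substituting $h_1 h_2+h_2 h_1=2\langle h_1,h_2\rangle$. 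Axioms $(1)$ and $(2)$ are the true payoff of the central identity: one application of graded Jacobi to $\CBr{\CBr{D}{e_1}}{\CBr{\CBr{D}{e_2}}{e_3}}$, respectively to $\CBr{\CBr{D}{e_1}}{\CBr{\CBr{D}{e_2}}{f}}$, combined with $\CBr{\CBr{D}{e_1}}{\CBr{D}{e_2}}=\CBr{D}{e_1\circ e_2}$, collapses the two sides of each desired equality.

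The main obstacle is the central identity itself, since it is the sole place where condition $(c)$ is essential: the vanishing $\CBr{D}{\CBr{D}{e_1}}=\CBr{D^2}{e_1}=0$ is exactly what promotes the derived bracket to a left Loday bracket satisfying axiom $(1)$ and simultaneously forces $\rho$ to be a Lie algebra morphism for axiom $(2)$. Every remaining verification reduces to $\mathbb{Z}_2$-sign bookkeeping inside graded Jacobi and the Clifford relation, so essentially all the technical care concentrates at this one step.
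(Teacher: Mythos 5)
Your proof is correct. The paper itself gives no proof of this statement --- it is quoted from the unpublished Alekseev--Xu manuscript --- but your argument is the standard derived-bracket derivation and every step checks out: the identity $\CBr{D}{\CBr{D}{X}}=\CBr{D^2}{X}=0$ (valid because $D^2\in\CinfM$ commutes with both function multiplication and the $\CinfM$-linear Clifford action), the resulting central identity $\CBr{D}{e_1\circ e_2}=\CBr{\CBr{D}{e_1}}{\CBr{D}{e_2}}$, and the sign bookkeeping in each application of graded Jacobi are all right, and axioms $(1)$--$(5)$ follow exactly as you describe. The only points left tacit are that $\CBr{\CBr{D}{e}}{\tobefilledin}$ is a derivation of $\CinfM$ (so $\rho(e)$ is genuinely a vector field) and that $\rho(fe)=f\rho(e)$ (so $\rho$ is a bundle map $E\to TM$); both are immediate from the graded Leibniz rule and the $\CinfM$-linearity of the pairing, so they do not constitute a gap.
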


It is natural to ask which kind of Dirac generating operators generates split Courant algebroids. This question is our main concern of the paper.
	
\section{Dirac generating operators of split Courant algebroids}\label{Sec:Dirac4split}
 In this section, we characterize Dirac generating operators of split Courant algebroids.
 We mainly follow the approach developed in~\cite{CS}, where Dirac generating operators of Courant algebroids arising from Lie bialgebroids are considered.

\subsection{General settings and the first main theorem}\label{Sec:firstmainthmsection}
Let $(A,\BrA{\tobefilledin}{\tobefilledin}, a_A)$ be a skew-symmetric dull algebroid, and $B$ a vector bundle over the same base manifold $M$.
By saying an $A$-connection on $B$, we mean an $\R$-bilinear map
\[
\secA \times \secB\to \secB,\quad (x,b)\mapsto \nabla_x  b
\]
satisfying
\begin{align*}
\nabla_{fx} b &= f \nabla_x b  &\mbox{and} & & \nabla_x  (fb) &= a_A(x)(f)b+f \nabla_x b,
\end{align*}
for all $x\in\secA, b\in \secB, f\in \CinfM$.
Such an $A$-connection determines an operator called covariant derivative $\dd^B\colon \OmegaAdegree{\bullet}\fotimes \secB \rightarrow \OmegaAdegree{\bullet+1} \fotimes \secB$ satisfying
\[
\dd^B(\omega \otimes b) = \big(\dd(\omega)\big)\otimes b +(-1)^{k}\omega\wedge \big(\dd^B(b)\big),
\]
for all $\omega\in \OmegaAdegree{k}, b\in \secB$. Here and in the sequel $\fotimes$ stands for $\otimes_{\CinfM}$. However, an element in $\OmegaAdegree{\bullet}\fotimes \secB$ is still denoted by $\omega\otimes b$ rather than $\omega\fotimes b$.

Let $(A,\BrA{\tobefilledin}{\tobefilledin}, a_A)$ and $(A^\ast,\BrAd{\tobefilledin}{\tobefilledin},a_\ast)$ be a pair of skew-symmetric dull algebroids, where $A$ is a rank $n$ vector bundle over an $m$-dimensional manifold $M$.
Inspired by~\cite{ELW},  the line bundle $\wedge^nA^\ast \otimes\wedge^mT^\ast M$ admits a canonical $A^\ast$-connection. More precisely, a section $\xi\in\secAd$ ``acts" on $\Gamma(\wedge^nA^\ast \otimes\wedge^mT^\ast M)$ by Lie derivatives
\begin{eqnarray*}
L_\xi  (\xi_1\wedge\ldots\wedge\xi_n\otimes\mu)
=\sum_{i=1}^n(\xi_1\wedge\ldots\wedge\BrAd{\xi}{\xi_i}\wedge\ldots\wedge\xi_n\otimes\mu)
+\xi_1\wedge\ldots\wedge\xi_n\otimes L_{a_\ast(\xi)}\mu.
\end{eqnarray*}
The square root $\mathcal{L}=(\wedge^nA^\ast \otimes\wedge^mT^\ast M)^{1/2}$, if exists,  also admits an $A^\ast$-connection. Denote the associated covariant derivative by
\begin{eqnarray*}
\breve{{\dstar}}\colon \wedgeAdegree{k}\fotimes\GammaL \rightarrow  \wedgeAdegree{k+1}\fotimes \GammaL .
\end{eqnarray*}
Similarly, if the square root $(\wedge^n A \otimes \wedge^m T^\ast M)^{1/2}$  exists, it also admits an $A$-connection, and thus a covariant derivative
\begin{eqnarray*}
\Gamma\big(\wedge^k A^\ast \otimes (\wedge^n A \otimes \wedge^m T^\ast M)^{1/2}\big)
\rightarrow
\Gamma\big(\wedge^{k+1} A^\ast \otimes (\wedge^n A \otimes \wedge^m T^\ast M)^{1/2}\big).
\end{eqnarray*}
Using the isomorphisms of vector bundles
\begin{eqnarray*}
\wedge^k A^\ast \cong \wedge^k A^\ast \otimes \wedge^{n} A \otimes \wedge^{n} A^\ast
\cong \wedge^{n-k} A \otimes \wedge^n A^\ast
\end{eqnarray*}
and
\begin{eqnarray*}
\wedge^n A^\ast \otimes (\wedge^n A \otimes \wedge^m T^\ast M)^{1/2}
\cong (\wedge^n A^\ast \otimes \wedge^m T^\ast M)^{1/2},
\end{eqnarray*}
one has a family of isomorphisms
\begin{eqnarray*}
\beta_k \colon \wedge^k A^\ast \otimes (\wedge^n A \otimes \wedge^m T^\ast M)^{1/2}
&\cong& \wedge^{n-k}A \otimes (\wedge^n A^\ast \otimes \wedge^m T^\ast M)^{1/2}.
\end{eqnarray*}
Therefore, one ends up with a derivation
\begin{eqnarray*}
\breve{\partial}\colon \wedgeAdegree{k}\fotimes\GammaL \rightarrow  \wedgeAdegree{k-1}\fotimes\GammaL .
\end{eqnarray*}
In the sequel, we fix two elements ${\threesecofA}\in\wedgeAdegree{3}$ and $\oldpsi\in\OmegaAdegree{3}$. They give rise to two operators
\begin{eqnarray*}
{\threesecofA}\colon \wedgeAdegree{k}\fotimes\GammaL
\rightarrow \wedgeAdegree{k+3}\fotimes\GammaL,
&\quad&
{\threesecofA}(r\otimes l):=({\threesecofA}\wedge r)\otimes l,
\\\mbox{and }\quad
\iota_\oldpsi\colon \wedgeAdegree{k}\fotimes\GammaL
\rightarrow\wedgeAdegree{k-3}\fotimes\GammaL,
&\quad&
\iota_\oldpsi(r\otimes l):=(\iota_\oldpsi r)\otimes l.
\end{eqnarray*}
We wish to find a simple condition so that ${\threesecofA}$ and $\oldpsi$ together with the two skew-symmetric dull algebroids $A$ and $A^\ast$ form a proto-bialgebroid.
For this purpose, the spinor bundle we choose is $S = \wedge^\bullet A\otimes \mathcal{L}$.
For the convenience of description, we introduce the odd operator
\begin{eqnarray}\label{hatD}
\bigD=\breve{{\dstar}}+\breve{\partial}+{\threesecofA}-\iota_{\oldpsi}\colon {\wedgeA\fotimes \GammaL } \rightarrow{\wedgeA\fotimes \GammaL } .
\end{eqnarray}
Here is our first main result, which can be regarded as an enhancement of \cite{CS}*{Theorem 3.1}.
\begin{Thm}\label{MAIN THM}
Let $(A,\BrA{\tobefilledin}{\tobefilledin},a_A)$ and $(A^\ast, \BrAd{\tobefilledin}{\tobefilledin}, a_\ast)$ be two dual skew-symmetric dull algebroids. Given ${\threesecofA}\in\wedgeAdegree{3}$ and  $\oldpsi\in\OmegaAdegree{3}$, let $\bigD$ be the operator defined in~\eqref{hatD}.
Then the following three statements are equivalent:
\begin{compactenum}
\item The septuple $(A,\BrA{\tobefilledin}{\tobefilledin}, \BrAd{\tobefilledin}{\tobefilledin}, a_A, a_\ast, {\threesecofA}, \oldpsi)$ is a proto-bialgebroid;
\item The square of the operator $\bigD$ satisfies $\bigD^2\in \CinfM$;
\item The operator $\bigD\colon {\wedgeA\fotimes \GammaL } \rightarrow{\wedgeA\fotimes \GammaL } $ is a Dirac generating operator for the split pseudo-Euclidean vector bundle $(A\oplus A^\ast, \langle\cdot,\cdot\rangle)$, where $ \langle\cdot,\cdot\rangle$ is the standard metric defined in~\eqref{Eqt:standardmetricsplitcase}.
\end{compactenum}
\end{Thm}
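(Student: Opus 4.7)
My plan is to prove the three-way equivalence via the easy direction (3) $\Rightarrow$ (2), the direct-computation direction (2) $\Rightarrow$ (3), and the analytic core (1) $\Leftrightarrow$ (2). The implication (3) $\Rightarrow$ (2) is immediate from clause (c) of Definition \ref{Def:DGO}. Under the Clifford action realizing $x \in \secA$ as exterior multiplication $x \wedge \,\cdot\,$ and $\xi \in \secAd$ as contraction $\iota_\xi$ on $\wedgeA \fotimes \GammaL$, a short derivation-based calculation gives $\CBr{\bigD}{f} = (\dstar f)\wedge\,\cdot\, + \iota_{\dd f}$, which is the Clifford action of $\dstar f + \dd f \in \Gamma(A \oplus A^\ast)$. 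An analogous but longer computation shows that $\CBr{\CBr{\bigD}{e_1}}{e_2}$ reproduces the Dorfman bracket in \eqref{Eqn:Dorfmanbracket}, with the four summands $\breve{\dstar}$, $\breve{\partial}$, ${\threesecofA}$, $-\iota_{\oldpsi}$ contributing, respectively, the $\BrAd{\cdot}{\cdot}$-, $\BrA{\cdot}{\cdot}$-, $\threesecofA$-, and $\oldpsi$-parts. Conditions (a) and (b) of Definition \ref{Def:DGO} are therefore automatic for $\bigD$ regardless of axioms, so once (2) is established, (3) follows.

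The heart of the proof is (1) $\Leftrightarrow$ (2). I would expand
\begin{align*}
\bigD^2 &= \breve{\dstar}^2 + \breve{\partial}^2 + \{\breve{\dstar},\breve{\partial}\} + \{\breve{\dstar},{\threesecofA}\} - \{\breve{\dstar},\iota_{\oldpsi}\} + \{\breve{\partial},{\threesecofA}\} - \{\breve{\partial},\iota_{\oldpsi}\} - \{{\threesecofA},\iota_{\oldpsi}\},
\end{align*}
using that $\threesecofA \wedge \threesecofA = 0$ and $\oldpsi \wedge \oldpsi = 0$ since both $\threesecofA$ and $\oldpsi$ have odd degree, whence the self-anticommutators $\{\threesecofA,\threesecofA\}$ and $\{\iota_{\oldpsi},\iota_{\oldpsi}\}$ vanish. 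Each surviving summand has a fixed net operator-degree on the wedge-grading of $\wedgeA \fotimes \GammaL$: $\{\breve{\dstar},{\threesecofA}\}$ has degree $+4$; $\{\breve{\partial},\iota_{\oldpsi}\}$ has degree $-4$; the pair $\breve{\dstar}^2$ and $\{\breve{\partial},{\threesecofA}\}$ has degree $+2$; the pair $\breve{\partial}^2$ and $\{\breve{\dstar},\iota_{\oldpsi}\}$ has degree $-2$; and $\{\breve{\dstar},\breve{\partial}\}$ together with $\{{\threesecofA},\iota_{\oldpsi}\}$ sit in degree $0$. The condition $\bigD^2 \in \CinfM$ is thus equivalent to the simultaneous vanishing of all non-zero-degree components together with the zero-degree part being multiplication by a function.

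The matching of these vanishing conditions to the axioms of Definition \ref{Def:Proto-bialgebroids} then proceeds case-by-case: the degree-$(+4)$ equation reads $\dstar \threesecofA = 0$, axiom (5); the degree-$(-4)$ equation reads $\dd \oldpsi = 0$, axiom (4); the degree-$(+2)$ equation, after testing on $\xi_1 \wedge \xi_2 \wedge \xi_3$ and using the standard interpretation of $\dstar^2$ as the Jacobi defect of $\BrAd{\cdot}{\cdot}$, becomes axiom (2); the degree-$(-2)$ equation is axiom (1) by the dualized argument; and the first-order part of the degree-$0$ equation, once $\{\breve{\dstar},\breve{\partial}\}$ is recognised as measuring the compatibility between $(A,\BrA{\cdot}{\cdot})$ and $(A^\ast,\BrAd{\cdot}{\cdot})$, is axiom (3). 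The residual zero-order scalar is then precisely $\bigD^2 \in \CinfM$ and is unconstrained by (1), as it is the invariant of the proto-bialgebroid referred to in the introduction.

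The main obstacle is the degree-zero analysis. Unlike the other components, which can be read off on $\wedgeAdegree{\bullet}$ alone, the operator $\{\breve{\dstar},\breve{\partial}\}$ genuinely senses the $A$- and $A^\ast$-connections on the half-density bundle $\mathcal{L}$ through the isomorphisms $\beta_k$; a naive commutator of $\dstar$ with a divergence on $\wedgeAdegree{\bullet}$ would miss these $\GammaL$-contributions, and worse, they can leak into the first-order part whose coefficient must be matched with axiom (3). I plan to evaluate $\{\breve{\dstar},\breve{\partial}\}$ on pure wedges $x_1 \wedge \cdots \wedge x_k \otimes l$, cleanly split the outcome into a first-order derivation (to be compared with the $\iota_{(\iota_y \iota_x \oldpsi)}\threesecofA$ term of axiom (3) after adding $\{\threesecofA,\iota_\oldpsi\}$) and a zero-order scalar, and verify that the latter is a genuine function on $M$ rather than a higher-order obstruction. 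A secondary bookkeeping task is to confirm that the cross-terms $\{\breve{\dstar},{\threesecofA}\}$, $\{\breve{\partial},{\threesecofA}\}$, $\{\breve{\dstar},\iota_{\oldpsi}\}$, $\{\breve{\partial},\iota_{\oldpsi}\}$ are sufficiently $\CinfM$-tensorial with respect to the $\GammaL$-factor that they make no contribution to the scalar remainder, so the axiom-by-axiom matching survives intact.
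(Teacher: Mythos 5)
Your overall strategy --- expand $\bigD^2$ by degree on the wedge-grading of $\wedgeA\fotimes\GammaL$ and match the graded components to the axioms of Definition \ref{Def:Proto-bialgebroids} --- is exactly the paper's (Equation \eqref{Eqt:Dsquarecomputed}), and your degree bookkeeping, your observation that conditions $(a)$ and $(b)$ of Definition \ref{Def:DGO} hold for $\bigD$ independently of the axioms (so that $(2)\Rightarrow(3)$), and your flagging of the degree-zero sector as the delicate one are all correct. The gap is in the degree-$(\pm 2)$ sectors, which you treat as routine. The degree-$(+2)$ component is not $\dstar^2+L_{\threesecofA}$ but
\[
\dstar^2 + L_{\threesecofA} + \tfrac{1}{2}\dstar(X_0) + \tfrac{1}{2}\iota_{\xi_0}{\threesecofA} - \partial({\threesecofA}),
\]
because $\breve{\dstar}^2=\dstar^2+\tfrac{1}{2}\dstar(X_0)\wedge$ and the anticommutator $\{\breve{\partial},{\threesecofA}\}$ produces $L_{\threesecofA}-\partial({\threesecofA})+\tfrac{1}{2}\iota_{\xi_0}{\threesecofA}$ via the BV relation \eqref{BV}. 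Its vanishing is therefore axiom $(2)$ \emph{plus} the modular identity $\partial({\threesecofA})=\tfrac{1}{2}\iota_{\xi_0}{\threesecofA}+\tfrac{1}{2}\dstar(X_0)$. In the direction $(2)\Rightarrow(1)$ this is harmless (evaluate at $v=1$ to peel off the modular identity first), but in the direction $(1)\Rightarrow(2)$ you must \emph{prove} that identity from the axioms --- this is Lemma \ref{Main-prop-1} in the paper, a genuinely nontrivial computation with the modular elements $X_0,\xi_0$ acting on $s\otimes V$ --- and your plan contains no step that would produce it.

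The degree-$(-2)$ sector has the same problem compounded by another: the component involves $\partial^2$ on $\wedgeAdegree{\bullet}$, whereas axiom $(1)$ is a statement about $\dd^2$ on $\OmegaAdegree{\bullet}$; your ``dualized argument'' must conjugate by $V^\sharp$, and that conjugation is exactly what generates the extra terms $-\tfrac{1}{2}\iota_{\iota_{X_0}\oldpsi}-\tfrac{1}{2}\iota_{\dd(\xi_0)}$ that have to be cancelled against $\partial_\ast(\oldpsi)$ (Lemma \ref{Lem:temp327} and Corollary \ref{Main-prop-22}). A minor slip in the same vein: the spinor module is $\wedgeA\fotimes\GammaL$, so the degree-$(+2)$ component should be tested on multivectors, not on $\xi_1\wedge\xi_2\wedge\xi_3$. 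None of this changes the architecture of your argument, but without the two modular identities of Lemma \ref{Main-prop-1} and the $V^\sharp$-conjugation of Lemma \ref{Lem:temp327} the implication $(1)\Rightarrow(2)$ does not close; the claimed one-to-one correspondence between graded components and individual axioms is not exact.
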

Suppose that $\bigD$ is a Dirac generating operator. The function $\bigD^2\in \CinfM$ is called the \textbf{characteristic function} of the proto-bialgebroid$(A, \BrA{\tobefilledin}{\tobefilledin}, \BrAd{\tobefilledin}{\tobefilledin}, a_A, a_\ast, {\threesecofA},\oldpsi)$ or of the associated split Courant algebroid $(A\oplus A^\ast, \langle\tobefilledin,\tobefilledin\rangle,\circ,\rho,{\threesecofA},\oldpsi)$.
\emptycomment{
As a by-product of  the Dirac generating operator $\bigD$, the Courant algebroid structure on $A\oplus A^\ast$ are of derived form:
\begin{eqnarray*}
\mbox{the anchor} \qquad
\rho(e)(f)&=&\CBr{\CBr{\bigD}{f}}{e},
\\
\mbox{the Dorfman bracket}\qquad
e_1\circ e_2&=&\CBr{\CBr{\bigD}{e_1}}{e_2},
\end{eqnarray*}
where $e, e_1, e_2\in\Gamma(A\oplus A^\ast), f\in \CinfM$.
}
The proof of Theorem  \ref{MAIN THM} is deferred to Section \ref{Sec:Proofs}.

\subsection{Modular elements and the second main theorem}\label{Sec:LAP}
 Throughout this section, we assume that $(A,\BrA{\tobefilledin}{\tobefilledin},a_A)$ and $(A^\ast, \BrAd{\tobefilledin}{\tobefilledin}, a_\ast)$ are skew-symmetric dull algebroids in duality. Denote by
$\dd\colon \OmegaAdegree{\bullet}\rightarrow\OmegaAdegree{\bullet+1}$ and
$\dstar \colon \wedgeAdegree{\bullet}\rightarrow\wedgeAdegree{\bullet+1}$ the associated derivations.
The Lie derivative along any element $x \in \secA$ is an $\R$-linear derivation
\[
L_x \colon \Gamma(\wedge^\bullet A \otimes \wedge^\diamond A^\ast) \to \Gamma(\wedge^\bullet A \otimes \wedge^\diamond A^\ast)
\]
induced by $L_xy := [x,y]_A$ for all $y \in \secA$ and by a Cartan type formula $L_x \xi := \dd (\iota_x\xi) + \iota_x (\dd \xi)$.
Similarly, the Lie derivative along any element $\xi \in \secAd$ is an $\R$-linear derivation
\[
L_\xi \colon \Gamma(\wedge^\bullet A \otimes \wedge^\diamond A^\ast) \to \Gamma(\wedge^\bullet A \otimes \wedge^\diamond A^\ast)
\]
induced by $L_\xi \eta := [\xi,\eta]_\ast$ for all $\eta \in \secAd$ and by a Cartan type formula $L_\xi x := d_\ast(\iota_\xi x) + \iota_\xi (d_\ast x)$.
\begin{lem}[\cite{CS}*{Proposition 4.6}]\label{42}
For all $x\in\secA,\xi\in\secAd$ and $f\in \CinfM$, we have
\begin{align*}
\big\langle {\dstar}\big(\BrA{f}{x}\big) -\BrA{{\dstar}(f)}{x} +\BrA{f}{{\dstar}(x)}|\xi\big\rangle
&= \langle (L_{d_A f}  + L_{d_\ast f})x | \xi \rangle  =\big(\CBr{L_x}{L_\xi}-L_{x\circ\xi}\big)(f),
\end{align*}
where $x \circ \xi := -\iota_\xi (d_\ast x) + L_x \xi$.
\end{lem}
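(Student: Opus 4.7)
The plan is to prove both equalities by unpacking definitions via three ingredients: (i) the Cartan-type formula $L_\xi x = \dstar(\iota_\xi x) + \iota_\xi(\dstar x)$ stated just before the lemma, (ii) the Schouten-type extension of $\BrA{\cdot}{\cdot}$ from $\secA$ to $\wedgeAdegree{\bullet}$ forced by the Leibniz rule (which yields $\BrA{f}{x} = -a_A(x)(f)$ and $\BrA{f}{P} = -\iota_{\dd f} P$ for $P\in\wedgeAdegree{\bullet}$), and (iii) the duality between the contractions $\iota_{(-)}$ and the pairing $\langle\cdot|\cdot\rangle$. Crucially, the Jacobi identity for either bracket is never invoked, which is why the identity survives at the level of skew-symmetric dull algebroids with no compatibility assumed between $A$ and $A^\ast$.

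For the first equality, apply the Cartan-type formula with $\omega = \dd f \in \secAd$ and use $\iota_{\dd f} x = a_A(x)(f)$ to obtain $L_{\dd f}(x) = \dstar(a_A(x)(f)) + \iota_{\dd f}(\dstar x)$, while $L_{\dstar f}(x) = \BrA{\dstar f}{x}$ by definition. Substituting the Schouten-type interpretations of $\BrA{f}{x}$ and $\BrA{f}{\dstar x}$ into the LHS, it rearranges into $(L_{\dd f}+L_{\dstar f})(x)$, which is the first equality after pairing with $\xi$. For the second equality, pair the middle expression with $\xi$ and expand term by term: $\langle\dstar(a_A(x)(f)),\xi\rangle$ reduces to $a_\ast(\xi)(L_x f) = L_\xi L_x f$ by the definition of $\dstar$ on functions, and $\langle\iota_{\dd f}(\dstar x),\xi\rangle$ equals the value of the $2$-vector $\dstar x$ on $(\dd f,\xi)$, which unfolds via the standard derivation formula
\[
(\dstar x)(\alpha,\beta) = a_\ast(\alpha)\langle x,\beta\rangle - a_\ast(\beta)\langle x,\alpha\rangle - \langle x,\BrAd{\alpha}{\beta}\rangle
\]
applied with $\alpha = \dd f, \beta = \xi$. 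Simultaneously, $\CBr{L_x}{L_\xi}(f) = a_A(x)(a_\ast(\xi) f) - a_\ast(\xi)(a_A(x) f)$, and since $x\circ\xi = -\iota_\xi(\dstar x) + L_x\xi \in \Gamma(A \oplus A^\ast)$, one has $L_{x\circ\xi}(f) = -a_A(\iota_\xi\dstar x)(f) + a_\ast(L_x\xi)(f)$. A direct term-by-term match, in which the contributions involving $\BrAd{\dd f}{\xi}$ cancel against the cross terms from the anchor actions, closes the identity.

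The main technical hurdle is sign bookkeeping and the consistent interpretation of the Schouten-type extension of $\BrA{\cdot}{\cdot}$ needed to give meaning to $\BrA{f}{\dstar x}$, since the extension is not canonical in the absence of Jacobi. The reassuring point is that only the Leibniz rule and the derivation property of $\dd$ and $\dstar$ are used throughout; no Jacobi identity is required, so the identity is genuinely an assertion about a dual pair of skew-symmetric dull algebroids. Once the correct Schouten-type sign conventions are fixed (as in \cite{CS}*{Section 4}), the whole computation reduces to rearranging the Cartan and derivation formulas above.
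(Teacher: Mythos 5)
The paper offers no proof of this lemma at all --- it is imported wholesale as \cite{CS}*{Proposition 4.6} --- so you are supplying an argument where the authors supply only a citation. Your overall strategy (unpack the Cartan-type formulas, the extension of $\BrA{\tobefilledin}{\tobefilledin}$ to functions forced by the BV relation~\eqref{BV}, and the anchors, then match terms) is the right one and essentially the only route available. But the key step of your first equality, the claim that the left-hand side ``rearranges into $(L_{\dd f}+L_{\dstar f})(x)$'', does not close. With the conventions you yourself state (and which are forced: $\BrA{f}{x}=\partial(fx)-f\,\partial(x)=-a_A(x)(f)$ follows from~\eqref{BV}), one finds
\[
\dstar\big(\BrA{f}{x}\big)-\BrA{\dstar (f)}{x}+\BrA{f}{\dstar (x)}
=-\dstar\big(a_A(x)(f)\big)-\BrA{\dstar(f)}{x}-\iota_{\dd f}\big(\dstar(x)\big)
=-\big(L_{\dd f}+L_{\dstar f}\big)(x),
\]
the \emph{negative} of the asserted middle expression. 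The obstruction is structural, not a matter of bookkeeping: the left-hand side carries $-\BrA{\dstar(f)}{x}$, whereas $L_{\dstar f}x=+\BrA{\dstar(f)}{x}$ is an honest bracket of two sections of $A$, so no admissible sign convention for $\BrA{f}{\tobefilledin}$ can reverse it. A rank-one test makes this concrete: take $M=\R$, $A=A^\ast=M\times\R$ with frames $e,e^\ast$, $\BrA{e}{e}=\BrAd{e^\ast}{e^\ast}=0$, $a_A(e)=a_\ast(e^\ast)=\partial_t$, and $x=ge$, $\xi=e^\ast$; then the first expression evaluates to $-2f'g'$, while $\big\langle(L_{\dd f}+L_{\dstar f})(ge)\,\big|\,e^\ast\big\rangle=+2f'g'$.

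What is true --- and what the rest of the paper actually uses --- is the equality between the first and third expressions: in the same test case $\big(\CBr{L_{ge}}{L_{e^\ast}}-L_{(ge)\circ e^\ast}\big)(f)=-g'f'-g'f'=-2f'g'$, matching the first expression, and the general expansion you outline for the third expression (the derivation formula for $(\dstar x)(\dd f,\xi)$, the dual Cartan formula for $a_\ast(L_x\xi)(f)$, and the anchor commutator) does combine to give exactly the first expression. Consequently your second equality, if executed as planned, would also come out with an overall sign: $(\CBr{L_x}{L_\xi}-L_{x\circ\xi})(f)=-\langle(L_{\dd f}+L_{\dstar f})x\,|\,\xi\rangle$. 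To repair the argument, either bypass the middle expression and prove the first and third equal to each other directly, or record the middle term with the corrected sign; as written, the asserted rearrangement is false and the ``direct term-by-term match'' cannot close the identity in the stated form. (A smaller slip: in the second-equality computation you occasionally pair with $\langle\cdot,\cdot\rangle=\tfrac{1}{2}\langle\cdot|\cdot\rangle$ where the lemma uses $\langle\cdot|\cdot\rangle$, which would introduce stray factors of $2$.)
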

Let us choose a nowhere-vanishing section $\Omega\in \OmegaAdegree{n}$ and let $V \in \wedgeAdegree{n}$ be the dual of $\Omega\in \OmegaAdegree{n}$ in the sense that $\langle\Omega|V\rangle=1$.
\begin{lem}[\cite{CS}*{Lemmas 4.1 and 4.3}]
For all $x\in\secA$ and $\xi\in\secAd$, we have
\begin{eqnarray}\label{LLL1}
L_x\Omega&=&-\partial(x)\,\Omega, \\
\label{LLL2}L_x V&=&\partial(x)\, V,\\
\label{LLL3}L_\xi\Omega&=&\partial_*(\xi)\,\Omega,\\
\label{LLL5}(L_x\Omega)\otimes V&=&-\Omega\otimes(L_x V),\\
\label{LLL6}(L_\xi\Omega)\otimes V&=&-\Omega\otimes (L_\xi V).
\end{eqnarray}
\end{lem}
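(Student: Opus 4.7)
The plan hinges on the observation that $\OmegaAdegree{n}$ and $\wedgeAdegree{n}$ are rank-one bundles, each locally generated by the nowhere-vanishing section $\Omega$ (respectively $V$). Since $L_x$ and $L_\xi$ are $\R$-linear derivations preserving the $\wedge A\otimes\wedge A^\ast$ bi-degree, each of the sections $L_x\Omega$, $L_\xi\Omega$, $L_xV$, $L_\xi V$ is necessarily a $\CinfM$-multiple of $\Omega$ or $V$ accordingly. Equations \eqref{LLL1} and \eqref{LLL3} can therefore be taken as the \emph{defining} relations of the two scalar operators $\partial\colon\secA\to\CinfM$ and $\partial_\ast\colon\secAd\to\CinfM$ (the ``modular'' functions attached to $\Omega$). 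Equivalently, since $\OmegaAdegree{n+1}=0$ forces $\dd\Omega=0$, Cartan's formula gives $L_x\Omega=\dd(\iota_x\Omega)\in\OmegaAdegree{n}$, which then must have the form $-\partial(x)\Omega$; the symmetric argument on the $A^\ast$-side produces $\partial_\ast(\xi)$.

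For \eqref{LLL2}, I would apply $L_x$ to the constant pairing $\langle\Omega|V\rangle=1$. Because $L_x$ acts on $\CinfM$ by $a_A(x)$ (as $\iota_xf=0$ forces $L_xf=\iota_x\dd f=a_A(x)(f)$), the left-hand side vanishes:
\begin{equation*}
0=a_A(x)(1)=L_x\langle\Omega|V\rangle=\langle L_x\Omega|V\rangle+\langle\Omega|L_xV\rangle.
\end{equation*}
Writing $L_xV=cV$ for some $c\in\CinfM$ (possible because $\wedgeAdegree{n}$ is a line bundle) and substituting \eqref{LLL1} yields $-\partial(x)+c=0$, hence \eqref{LLL2}. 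An identical argument, now using \eqref{LLL3} and $L_\xi$ acting on $\CinfM$ via $a_\ast(\xi)$, produces the companion identity $L_\xi V=-\partial_\ast(\xi)V$.

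Equations \eqref{LLL5} and \eqref{LLL6} then fall out immediately from $\CinfM$-bilinearity of $\fotimes$:
\begin{equation*}
(L_x\Omega)\otimes V=\bigl(-\partial(x)\Omega\bigr)\otimes V=-\Omega\otimes\bigl(\partial(x)V\bigr)=-\Omega\otimes(L_xV),
\end{equation*}
and, symmetrically, $(L_\xi\Omega)\otimes V=\partial_\ast(\xi)\Omega\otimes V=-\Omega\otimes\bigl(-\partial_\ast(\xi)V\bigr)=-\Omega\otimes(L_\xi V)$. The only non-trivial verification in this plan, and hence the expected main obstacle, is the Leibniz rule $L_x\langle\Omega|V\rangle=\langle L_x\Omega|V\rangle+\langle\Omega|L_xV\rangle$ (and its $L_\xi$-analogue). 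This reduces to the standard commutation $[L_x,\iota_y]=\iota_{\BrA{x}{y}}$ applied inductively on the degree of $V$, which is itself a routine consequence of the derivation property already built into the Cartan-formula definitions of $L_x$ and $L_\xi$ adopted above.
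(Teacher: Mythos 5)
Your computational skeleton---Cartan's formula in top degree, dualizing through $\langle\Omega|V\rangle=1$, and moving the scalar across $\fotimes=\otimes_{\CinfM}$ for \eqref{LLL5}--\eqref{LLL6}---is the right one. The gap is in how you treat $\partial$ and $\partial_\ast$: you propose to \emph{take} \eqref{LLL1} and \eqref{LLL3} as the definitions of $\partial(x)$ and $\partial_\ast(\xi)$. But in this paper those operators are already defined in Section~\ref{Sec:LAP} as the Batalin--Vilkovisky operators obtained by conjugating $\dd$ and $\dstar$ by $V^\sharp$ (Equation \eqref{Eqn:V-d-partial} and its $\dstar$-analogue), and it is \emph{those} operators that appear everywhere downstream (the BV relation \eqref{BV}, Lemma~\ref{Main-prop-1}, the term $\partial(X_0)$ in the characteristic function). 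Under your reading, \eqref{LLL1} and \eqref{LLL3} become tautologies and the actual content of the lemma---that the degree-one parts of the BV operators are computed by Lie derivatives of $\Omega$ and $V$---is never established. Your phrase ``which then must have the form $-\partial(x)\Omega$'' asserts precisely the point that needs proof.

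The repair is one line each, and you already have the key observation. Since $\dd\Omega\in\OmegaAdegree{n+1}=0$, Cartan gives $L_x\Omega=\dd(\iota_x\Omega)=\dd\,\Omega^\sharp(x)$; writing $L_x\Omega=c\,\Omega$ and applying $V^\sharp$, the definition $\partial(x)=-(-1)^{n(1+1)}\,V^\sharp\dd\,\Omega^\sharp(x)$ yields $\partial(x)=-V^\sharp(c\,\Omega)=-c\,\langle\Omega|V\rangle=-c$, which is \eqref{LLL1}. On the $\ast$-side it is cleanest to argue in the order opposite to yours: $\dstar V\in\wedgeAdegree{n+1}=0$ gives $L_\xi V=\dstar(\iota_\xi V)=\dstar V^\sharp(\xi)=-V^\sharp(\partial_\ast\xi)=-\partial_\ast(\xi)\,V$ directly from the defining square for $\partial_\ast$; your duality identity $0=\langle L_\xi\Omega|V\rangle+\langle\Omega|L_\xi V\rangle$ then yields \eqref{LLL3}, rather than the reverse. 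Everything else is correct as written: the derivation of \eqref{LLL2} from \eqref{LLL1} by pairing against $V$, the reduction of the Leibniz rule for $\langle\,\cdot\,|\,\cdot\,\rangle$ to $[L_x,\iota_y]=\iota_{\BrA{x}{y}}$ (which indeed uses only the Leibniz rule of the dull bracket, never Jacobi, so it is valid here), and the $\CinfM$-balancing argument for \eqref{LLL5}--\eqref{LLL6}.
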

The elements $\Omega$ and $V$ induce two isomorphisms:
\begin{eqnarray*}
\Omega^\sharp&\colon &\Gamma(\wedge^kA) \rightarrow \Gamma(\wedge^{n-k}A^\ast), \qquad   r\mapsto \iota_r\Omega, \\
V^\sharp&\colon &\Gamma(\wedge^kA^\ast) \rightarrow \Gamma(\wedge^{n-k}A),\qquad \omega \mapsto\iota_\omega V,
\end{eqnarray*}
which are essentially inverse to each other:
\begin{eqnarray*}
V^\sharp\circ\Omega^\sharp
&=&(-1)^{k(n-1)}\id_{\wedgeAdegree{k}}, \\
\Omega^\sharp\circ V^\sharp
&=&(-1)^{k(n-1)}\id_{\OmegaAdegree{k}} .
\end{eqnarray*}
Consider the operator $\partial$ induced from $\dd$ by the isomorphism $V^\sharp$:
\[
\begin{tikzcd}
\OmegaAdegree{k} \ar{r}{V^\sharp} \ar{d}[swap]{-(-1)^k\dd} &\wedgeAdegree{n-k}
\ar{d}{\partial} \\
\OmegaAdegree{k+1} \ar{r}[swap]{V^\sharp} & \wedgeAdegree{n-k-1}.
\end{tikzcd}
\]
In other words, for all $\omega\in\OmegaAdegree{k}$, we have
\begin{eqnarray}\label{Eqn:V-d-partial}
-V^\sharp\dd\omega =(-1)^k\partial V^\sharp \omega,
\end{eqnarray}
which implies that
\begin{eqnarray*}
\partial r =  -(-1)^{n-k}\left(V^\sharp \circ \dd \circ (V^\sharp)^{-1}\right) r = - (-1)^{n(k+1)}(V^\sharp \circ \dd \circ \Omega^\sharp) r, \quad\forall r\in\wedgeAdegree{k}.
\end{eqnarray*}
The operator $\partial$ is not a derivation, but a second order differential operator,  called a Batalin-Vilkovisky operator for the skew-symmetric dull algebroid $A$.
For any $r_1\in\wedgeAdegree{k}$ and $r_2\in\wedgeAdegree{l}$, we have the BV relation
\begin{eqnarray}\label{BV}
\BrA{r_1}{r_2}=(-1)^k\partial(r_1\wedge r_2) -(-1)^k (\partial r_1)\wedge r_2 -r_1\wedge(\partial r_2).
\end{eqnarray}

Similarly, we also have the Batalin-Vilkovisky operator $\partial_\ast$ dual to ${\dstar}$:
\[
\begin{tikzcd}
\wedgeAdegree{n-k} \ar{d}[swap]{(-1)^k{\dstar}} &\OmegaAdegree{k} \ar{l}[swap]{V^\sharp}
\ar{d}{\partial_\ast} \\
\wedgeAdegree{n-k+1} & \OmegaAdegree{k-1} \ar{l}{V^\sharp}.
\end{tikzcd}
\]
Explicitly, we have for all $\omega\in\OmegaAdegree{k}$,
\begin{eqnarray*}
{\dstar} V^\sharp\omega =(-1)^kV^\sharp\partial_\ast\omega.
\end{eqnarray*}
The BV relation for $\partial_\ast$  reads
\begin{eqnarray}\label{BV2}
\BrAd{\Xi_1}{\Xi_2} =(-1)^l\partial_\ast(\Xi_1\wedge\Xi_2) -(-1)^l(\partial_\ast\Xi_1)\wedge\Xi_2 - \Xi_1 \wedge(\partial_\ast\Xi_2),
\end{eqnarray}
 for all $\Xi_1\in\OmegaAdegree{l}$, $\Xi_2\in \OmegaAdegree{p}$.
\begin{Rem}
One should be cautious that in general $\partial^2\neq 0$. In fact, from Equation \eqref{BV} one can find for all $x,y,z\in\secA$,
\begin{eqnarray*}
\partial^2(x\wedge y) &=&\BrA{\partial(x)}{y} +\BrA{x}{\partial(y)} -\partial(\BrA{x}{y}),\\
\mbox{and}\quad
\partial^2(x\wedge y\wedge z) &=&\frac{1}{3}\partial\big(\BrA{x\wedge y}{z}\big)
+\frac{1}{3}\partial\big(\BrA{y\wedge z}{x}\big) +\frac{1}{3}\partial\big(\BrA{z\wedge x}{y}\big) \\
&&\qquad+\frac{1}{3}\partial\big(x\wedge \BrA{y}{z}\big) +\frac{1}{3}\partial\big(y\wedge \BrA{z}{x}\big)
+\frac{1}{3}\partial\big(z\wedge \BrA{x}{y}\big)\\
&&\qquad\qquad+\BrA{x}{\partial(y) z} +\BrA{y}{\partial(z) x} +\BrA{z}{\partial(x) y} \\
&&\qquad\qquad\qquad+\BrA{\partial(x)}{y} \,z +\BrA{\partial(y)}{z} \,x +\BrA{\partial(z)}{x} \,y.
\end{eqnarray*}
In fact, if $\partial^2=0$, then $A$ is a Lie algebroid.
\end{Rem}

 \begin{def-prop}
Let $s\in\Gamma(\wedge^m T^\ast M)$ be a volume form of $M$, $\Omega\in \OmegaAdegree{n}$  a nowhere-vanishing section, and $V \in \wedgeAdegree{n}$ be its dual such that $\langle\Omega | V \rangle=1$.
 \begin{compactenum}
 		\item There exists a unique $X_0 \in \secA$, called the modular element of the skew-symmetric dull algebroid $A^\ast$, such that
 		\begin{eqnarray}\label{X_0}
 	L_\xi(\Omega\otimes s) =(L_\xi\Omega)\otimes s +\Omega\otimes L_{a_\ast(\xi)}s =\langle \xi | X_0\rangle\Omega\otimes s, \quad\forall\xi\in\secAd.
 		\end{eqnarray}
 		\item There exists a unique $\xi_0\in\secAd$, called the modular element  of the skew-symmetric dull algebroid $A$, such that
 		\begin{eqnarray}\label{xi_0}
 		L_x(s\otimes V) =(L_{a_A(x)}s)\otimes V+s\otimes L_xV =\langle \xi_0 | x \rangle s\otimes V, \quad\forall x\in\secA.
 		\end{eqnarray}
 \end{compactenum}
 \end{def-prop}
When $(A,A^\ast)$ is a Lie bialgebroid, both $X_0$ and $\xi_0$ are Chevalley-Eilenberg $1$-cocycles, called modular cocycles. Their cohomology classes are called modular classes~\cite{ELW}.

Our second main result  gives expression of the characteristic function $\bigD^2$  when the operator $\bigD$ in~\eqref{hatD} is a Dirac generating operator.
\begin{Thm}\label{f-D2}
Assume that $(A,\BrA{\tobefilledin}{\tobefilledin},\BrAd{\tobefilledin}{\tobefilledin}, a_A, a_\ast, {\threesecofA}, \oldpsi)$ is a proto-bialgebroid such that the line bundle $\wedge^nA^\ast \otimes\wedge^mT^\ast M$ of the graded manifold $A^\ast[1]$ admits a square root $\mathcal{L}$.
Then the Dirac generating operator $\bigD$ has the form
\begin{eqnarray}\label{expression of D}
\bigD ={\dstar}+\frac{1}{2}X_0 - \partial + \frac{1}{2}\iota_{\xi_0} + {\threesecofA}-\iota_{\oldpsi} \colon  {\wedgeA\fotimes \GammaL } \to {\wedgeA\fotimes \GammaL } .
\end{eqnarray}
Moreover, the characteristic function
 \begin{equation}\label{Eqt:brevefexplicitly}
 \breve{f}:= \bigD^2 = \frac{1}{4}\langle\xi_0|X_0\rangle -\frac{1}{2}\partial(X_0) -\langle{\threesecofA}|\oldpsi\rangle,
\end{equation}
is an invariant of this proto-bialgebroid, which can be determined by any of the following two equations:
\begin{compactenum}
\item[$(a)$] $L_{X_0}(\Omega\otimes s)
=4\big(\breve{f}+\langle{\threesecofA}|\oldpsi\rangle\big) \, \Omega\otimes s$;
\item[$(b)$] $L_{\xi_0}(s\otimes V)
=4\big(\breve{f}+\langle{\threesecofA}|\oldpsi\rangle\big) \, s\otimes V$.
\end{compactenum}
\end{Thm}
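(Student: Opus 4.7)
The plan is to establish the explicit form \eqref{expression of D} of $\bigD$ first, then compute $\bigD^2$ with the aid of Theorem \ref{MAIN THM}, and finally derive the equivalent characterizations (a), (b) along with the invariance claim.

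For \eqref{expression of D}, I would unpack the definitions of $\breve{\dstar}$ and $\breve{\partial}$ via a trivialization. Setting $l_0 = \sqrt{\Omega \otimes s}$, the $A^\ast$-connection on $\mathcal{L}^2 = \wedge^n A^\ast \otimes \wedge^m T^\ast M$ is given by Lie derivatives, so \eqref{X_0} yields $\nabla_\xi(\Omega \otimes s) = \langle \xi \mid X_0 \rangle\, \Omega \otimes s$; taking the square root gives $\nabla_\xi l_0 = \tfrac{1}{2}\langle \xi \mid X_0 \rangle\, l_0$. By the graded Leibniz rule, this yields $\breve{\dstar}(r \otimes l_0) = (\dstar r + \tfrac{1}{2} X_0 \wedge r) \otimes l_0$, i.e.\ $\breve{\dstar} = \dstar + \tfrac{1}{2} X_0\wedge$ in the trivialization. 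The analogous computation for $\breve{\partial}$, using \eqref{xi_0}, the $A$-connection on $\wedge^n A \otimes \wedge^m T^\ast M$, and the sign identity \eqref{Eqn:V-d-partial} transported through the isomorphisms $\beta_k$, gives $\breve{\partial} = -\partial + \tfrac{1}{2}\iota_{\xi_0}$. Adding $\threesecofA - \iota_\oldpsi$ establishes \eqref{expression of D}.

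Since Theorem \ref{MAIN THM} already guarantees $\bigD^2 \in \CinfM$, I would evaluate $\bigD^2$ on the distinguished element $1 \otimes l_0$ and extract its scalar part. Using \eqref{expression of D} and the identities $\dstar 1 = \partial 1 = \iota_{\xi_0} 1 = \iota_\oldpsi 1 = 0$, we get $\bigD(1 \otimes l_0) = \bigl(\tfrac{1}{2} X_0 + \threesecofA\bigr) \otimes l_0$. Applying $\bigD$ once more and carefully expanding, many contributions drop out: $X_0 \wedge X_0 = 0$ and $\threesecofA \wedge \threesecofA = 0$ (the latter since $\threesecofA$ has odd exterior degree), $\iota_\oldpsi X_0 = 0$ for bidegree reasons, $\dstar \threesecofA = 0$ by axiom (5) of Definition \ref{Def:Proto-bialgebroids}, and $\tfrac{1}{2}\threesecofA \wedge X_0 + \tfrac{1}{2} X_0 \wedge \threesecofA = 0$ by graded commutativity. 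The scalar part that remains is exactly $\tfrac{1}{4}\langle \xi_0 \mid X_0 \rangle - \tfrac{1}{2}\partial(X_0) - \langle \threesecofA \mid \oldpsi \rangle$; all surviving higher-exterior-degree contributions must vanish by Theorem \ref{MAIN THM}, yielding \eqref{Eqt:brevefexplicitly}. This is the main obstacle, as one must carefully bookkeep every term in $\bigD^2(1 \otimes l_0)$ to correctly isolate the scalar piece.

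Finally, the operator $\bigD$ in \eqref{hatD} is defined intrinsically from the proto-bialgebroid data, while the trivialization $\Omega \otimes s$ and the resulting modular elements $X_0, \xi_0$ are only computational devices; hence $\breve{f} = \bigD^2$ is automatically an invariant of the proto-bialgebroid. To derive (a), I would expand $L_{X_0}(\Omega \otimes s) = (L_{X_0}\Omega) \otimes s + \Omega \otimes L_{a_A(X_0)} s$. By \eqref{LLL1}, $L_{X_0}\Omega = -\partial(X_0)\Omega$; to rewrite the second term, apply \eqref{xi_0} with $x = X_0$ and expand its left-hand side via \eqref{LLL2}, obtaining $L_{a_A(X_0)} s = (\langle \xi_0 \mid X_0 \rangle - \partial(X_0))\, s$. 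Substituting gives $L_{X_0}(\Omega \otimes s) = (\langle \xi_0 \mid X_0 \rangle - 2\partial(X_0))\, \Omega \otimes s$, which equals $4\bigl(\breve{f} + \langle \threesecofA \mid \oldpsi \rangle\bigr)\, \Omega \otimes s$ by \eqref{Eqt:brevefexplicitly}. Statement (b) follows by the symmetric argument exchanging the roles of $(A, \Omega, X_0, \partial)$ and $(A^\ast, V, \xi_0, \partial_\ast)$.
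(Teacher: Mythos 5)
Your derivation of the explicit form \eqref{expression of D}, your evaluation of $\bigD^2$ on $1\otimes l_0$ (a legitimate shortcut: once Theorem \ref{MAIN THM} guarantees $\bigD^2\in\CinfM$, only the degree-zero component of $\bigD^2(1\otimes l_0)$ needs to be extracted, and it is indeed $\frac{1}{4}\langle\xi_0|X_0\rangle-\frac{1}{2}\partial(X_0)-\langle{\threesecofA}|\oldpsi\rangle$; the paper instead computes $\bigD^2(v)$ for general $v$ and kills each homogeneous piece by hand), and your proofs of statements $(a)$ and $(b)$ are all correct and consistent with the paper.

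The gap is in the invariance claim. You dismiss it by saying that $\bigD$ in \eqref{hatD} is ``defined intrinsically'' so that $\breve{f}=\bigD^2$ is ``automatically an invariant.'' This is precisely what the paper does \emph{not} concede: its closing Remark states that the Dirac generating operator $\bigD$ \emph{does} depend on the choices of $s$, $\Omega$ and $V$ (the identifications entering the construction of $\breve{\partial}$, in particular the square root of the canonical pairing $\wedge^nA^\ast\otimes\wedge^nA\cong\R$ hidden in the isomorphisms $\beta_k$, are not canonical), so invariance of $\bigD^2$ cannot be read off from \eqref{hatD}. Independently of that, the formula \eqref{Eqt:brevefexplicitly} is written entirely in terms of the choice-dependent objects $X_0$, $\xi_0$ and $\partial$, and the content of the invariance assertion is exactly that this combination is unchanged under rescaling; that has to be checked. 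The paper does so explicitly: for $s'=gs$, $\Omega'=h\Omega$, $V'=h^{-1}V$ one finds
\begin{equation*}
X_0'=X_0+\dstar(\log g+\log h),\qquad \xi_0'=\xi_0+\dd(\log g-\log h),\qquad \partial'=\partial-\iota_{\dd\log h},
\end{equation*}
and then verifies that $\frac{1}{4}\langle\xi_0'|X_0'\rangle-\frac{1}{2}\partial'(X_0')$ equals $\frac{1}{4}\langle\xi_0|X_0\rangle-\frac{1}{2}\partial(X_0)$. Your proposal contains neither this computation nor any substitute for it, so the invariance part of the theorem remains unproved.
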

We postpone the proof to Section \ref{Sec:Proofs}.
In particular, when both $\tau$ and $\phi$ vanishes, we rediscover the Dirac generating operator and characteristic function of Lie bialgebroids.
\begin{Cor}[\cites{CS, GMX}]
  Assume that $(A,\BrA{\tobefilledin}{\tobefilledin},\BrAd{\tobefilledin}{\tobefilledin}, a_A, a_\ast)$ is a Lie bialgebroid. Then its Dirac generating operator $\bigD$ has the form
  \[
   \bigD =  \breve{{\dstar}}+\breve{\partial} ={\dstar}-\partial+\frac{1}{2}(X_0 +\iota_{\xi_0}).
  \]
  The characteristic function
  \[
  \breve{f} = \frac{1}{4}\langle\xi_0|X_0\rangle -\frac{1}{2}\partial(X_0)
  \]
  is an invariant of this Lie bialgebroid $(A,A^\ast)$.
\end{Cor}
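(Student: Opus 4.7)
My plan is to derive the corollary as a direct specialization of Theorem \ref{f-D2} to the vanishing-twist case $\threesecofA = 0$, $\oldpsi = 0$. The first step is to verify that a Lie bialgebroid $(A, A^\ast)$ is exactly such a proto-bialgebroid. Scanning Definition \ref{Def:Proto-bialgebroids} with both twists set to zero, axioms (1) and (2) collapse to the strict Jacobi identities for $\BrA{\cdot}{\cdot}$ and $\BrAd{\cdot}{\cdot}$; axiom (3) reduces to the standard compatibility $\dstar(\BrA{x}{y}) = \BrA{\dstar x}{y} + \BrA{x}{\dstar y}$; and axioms (4) and (5) become vacuous. This entitles me to invoke Theorem \ref{f-D2}.

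Next, I would read off both expressions for $\bigD$ by direct substitution. Setting $\threesecofA = 0$ and $\oldpsi = 0$ in the defining formula \eqref{hatD} kills the last two summands and gives at once
\[
\bigD = \breve{\dstar} + \breve{\partial},
\]
which is the first equality in the corollary. Plugging the same vanishings into the explicit expression \eqref{expression of D} from Theorem \ref{f-D2} yields
\[
\bigD = \dstar + \tfrac{1}{2} X_0 - \partial + \tfrac{1}{2}\iota_{\xi_0},
\]
which after regrouping is precisely $\dstar - \partial + \tfrac{1}{2}(X_0 + \iota_{\xi_0})$, the second form asserted.

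Finally, specializing the formula \eqref{Eqt:brevefexplicitly} for the characteristic function kills the pairing $\langle \threesecofA | \oldpsi \rangle$ and leaves
\[
\breve{f} = \tfrac{1}{4}\langle \xi_0 | X_0 \rangle - \tfrac{1}{2}\partial(X_0),
\]
as claimed. Invariance---that is, independence of the auxiliary choices of a nowhere-vanishing $\Omega \in \OmegaAdegree{n}$ and volume form $s \in \Gamma(\wedge^m T^\ast M)$ that enter the definitions of $X_0$, $\xi_0$, $\partial$---is inherited from Theorem \ref{f-D2}, since conditions (a) and (b) there specialize to the intrinsic Lie-derivative identities $L_{X_0}(\Omega \otimes s) = 4\breve{f}\, \Omega \otimes s$ and $L_{\xi_0}(s \otimes V) = 4\breve{f}\, s \otimes V$, which determine $\breve{f}$ without reference to the chosen trivializations. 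There is no substantive obstacle anywhere in this argument: the corollary is a bookkeeping specialization of Theorem \ref{f-D2}, and the only care required is in checking that setting the twists to zero strips both the axioms of Definition \ref{Def:Proto-bialgebroids} and the formulas of Theorem \ref{f-D2} down to the data and expressions of the Lie bialgebroid case.
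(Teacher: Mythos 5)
Your proposal is correct and matches the paper's own (implicit) derivation: the corollary is stated as an immediate specialization of Theorem \ref{f-D2} to the case $\threesecofA = 0$, $\oldpsi = 0$, which is exactly what you carry out. The only detail you add beyond the paper is the explicit check that a Lie bialgebroid satisfies the axioms of Definition \ref{Def:Proto-bialgebroids} with vanishing twists, which is a harmless and correct elaboration.
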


\section{Proofs of main theorems}\label{Sec:Proofs}
Let  $(A,\BrA{\tobefilledin}{\tobefilledin},a_A)$ and $(A^\ast, \BrAd{\tobefilledin}{\tobefilledin}, a_\ast)$ be two skew-symmetric dull algebroids in duality.
We also fix two elements ${\threesecofA}\in\wedgeAdegree{3}$ and $\oldpsi\in\OmegaAdegree{3}$.
All other notations are specified as before.

\subsection{Some preparatory work}
\begin{lem}\label{Lem:temp327}
For all $s \in \wedgeAdegree{k}$ and $\omega \in \OmegaAdegree{n-k}$ satisfying $s=V^\sharp(\omega) = \iota_\omega V$, we have
\begin{eqnarray*}
&&\big(\partial^2
		-{\dstar} \circ\iota_{\oldpsi}
		-\iota_{\oldpsi}\circ{\dstar}
		-\frac{1}{2}\iota_{\iota_{X_0}\oldpsi}
		-\frac{1}{2}\iota_{\dd(\xi_0)}\big)(s)
		\nonumber\\
		&=&-V^\sharp\bigg(\big(\dd^2
		+L_{\oldpsi}-\partial_\ast(\oldpsi)
		+\frac{1}{2}\iota_{X_0}\oldpsi
		+\frac{1}{2}\dd(\xi_0)\big)\omega\bigg),
\end{eqnarray*}	
where $L_{\oldpsi}\omega=\BrAd{\oldpsi}{\omega}$.
\end{lem}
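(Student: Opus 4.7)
The plan is to establish the identity by transporting each summand on the left-hand side across the isomorphism $V^\sharp$ and matching it with the corresponding summand on the right-hand side inside $\OmegaA$. The only structural facts I need are the two duality identities
\[
V^\sharp \dd = -(-1)^k \partial\, V^\sharp, \qquad \dstar\, V^\sharp = (-1)^k V^\sharp \partial_\ast,
\]
(with $k = \deg \omega$), the BV identity~\eqref{BV2}, the composition rule for contractions, and the defining equations~\eqref{X_0}--\eqref{xi_0} for the modular elements $X_0, \xi_0$.

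First I would handle the term $\partial^2 s$. Iterating the first duality identity once yields $\partial^2 V^\sharp\omega = -V^\sharp(\dd^2\omega)$, which exactly produces the $-V^\sharp(\dd^2\omega)$ summand on the right-hand side and requires no further work.

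Next, for $(\dstar\circ\iota_\oldpsi + \iota_\oldpsi\circ\dstar)(s)$, I would use the convention $\iota_{\xi_1\wedge\cdots\wedge\xi_p} := \iota_{\xi_p}\circ\cdots\circ\iota_{\xi_1}$ to express $\iota_\oldpsi V^\sharp\omega$ as a signed multiple of $V^\sharp(\oldpsi\wedge\omega)$; the second duality identity then transports $\dstar$ across $V^\sharp$, producing $\partial_\ast(\oldpsi\wedge\omega)$ on the $\OmegaA$-side. Applying the BV identity~\eqref{BV2} to $\partial_\ast(\oldpsi\wedge\omega)$ splits this into $L_\oldpsi\omega$, $\partial_\ast(\oldpsi)\wedge\omega$, and a tail $\oldpsi\wedge\partial_\ast\omega$. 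That tail cancels against the corresponding contribution arising from $\iota_\oldpsi\dstar V^\sharp\omega$, and the survivors match the $L_\oldpsi\omega - \partial_\ast(\oldpsi)\wedge\omega$ pieces on the right-hand side.

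Finally, the modular correction terms $-\tfrac12 \iota_{\iota_{X_0}\oldpsi}s$ and $-\tfrac12 \iota_{\dd\xi_0}s$ on the left match $-\tfrac12 V^\sharp(\iota_{X_0}\oldpsi\wedge\omega)$ and $-\tfrac12 V^\sharp(\dd\xi_0\wedge\omega)$ on the right. They appear because na\"ive dualisation of the Cartan formula via $V^\sharp$ picks up boundary pieces that measure the failure of $\dstar$ and $\dd$ to preserve the volume element $\Omega$; this failure is encoded precisely by $X_0$ and $\xi_0$ via~\eqref{X_0}--\eqref{xi_0}, with Lemma~\ref{42} handling the passage from Lie derivatives to the contractions $\iota_{X_0}\oldpsi$ and $\dd\xi_0$. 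The main obstacle is sign bookkeeping in the composition $\iota_\oldpsi\circ V^\sharp$ and in the two duality identities; I would manage it by first reducing to decomposable $\oldpsi = \xi_1\wedge\xi_2\wedge\xi_3$ and extending to arbitrary $\oldpsi$ by $\CinfM$-multilinearity.
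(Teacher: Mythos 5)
Your proposal follows the paper's route for the two substantive terms: iterating $V^\sharp \dd = -(-1)^k \partial V^\sharp$ gives $\partial^2 V^\sharp(\omega) = -V^\sharp(\dd^2\omega)$ at once, and the anticommutator $\dstar\circ\iota_{\oldpsi}+\iota_{\oldpsi}\circ\dstar$ is converted into $V^\sharp\big(L_{\oldpsi}\omega - \partial_\ast(\oldpsi)\wedge\omega\big)$ exactly as in the paper, via $\iota_{\oldpsi}\iota_\omega V = \iota_{\omega\wedge\oldpsi}V$, the duality $\dstar V^\sharp = (-1)^{\deg}V^\sharp\partial_\ast$, and the BV relation~\eqref{BV2}, with the two $\oldpsi\wedge\partial_\ast\omega$ tails cancelling as you describe. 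The one place your reasoning goes off track is the modular terms. They are not ``boundary pieces picked up by dualising the Cartan formula,'' and neither the defining equations~\eqref{X_0}--\eqref{xi_0} nor Lemma~\ref{42} plays any role here: the required identities
\[
\iota_{\iota_{X_0}\oldpsi}\big(V^\sharp(\omega)\big) = V^\sharp\big((\iota_{X_0}\oldpsi)\wedge\omega\big),
\qquad
\iota_{\dd(\xi_0)}\big(V^\sharp(\omega)\big) = V^\sharp\big(\dd(\xi_0)\wedge\omega\big)
\]
are bare instances of the composition rule $\iota_\alpha\circ\iota_\omega = \iota_{\omega\wedge\alpha}$, with no sign since $\iota_{X_0}\oldpsi$ and $\dd(\xi_0)$ have even degree. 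Indeed the lemma is a purely formal statement that holds verbatim with $X_0$ and $\xi_0$ replaced by arbitrary elements of $\secA$ and $\secAd$; the modular interpretation only becomes relevant later, in Lemma~\ref{Main-prop-1} and Corollary~\ref{Main-prop-22}. Since you do list the contraction composition rule among your tools, this misattribution is not fatal, but trying to route that step through~\eqref{X_0},~\eqref{xi_0}, and Lemma~\ref{42} would lead nowhere. One further small caution: the reduction to decomposable $\oldpsi$ by $\CinfM$-multilinearity is legitimate only for the tensorial pieces such as $\iota_{\oldpsi}\circ V^\sharp$ (which is how you use it); it cannot be applied to the identity as a whole, since $L_{\oldpsi}$ and $\partial_\ast(\oldpsi)$ are not $\CinfM$-linear in $\oldpsi$.
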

\begin{proof}
Firstly, using~\eqref{Eqn:V-d-partial} twice, we have
\begin{eqnarray*}
	\partial^2(s) &= \partial^2\big(V^\sharp(\omega)\big) =-V^\sharp(\dd^2\omega).
\end{eqnarray*}
Direct computations show that
\begin{align*}
\iota_{\iota_{X_0}\oldpsi}s &= \iota_{\iota_{X_0}\oldpsi}V^\sharp(\omega)
		= \iota_{(\iota_{X_0}\oldpsi)\wedge\omega}V
		=V^\sharp\big((\iota_{X_0}\oldpsi)\wedge\omega\big),
\end{align*}
and
\begin{align*}
	\iota_{\dd(\xi_0)}s &= \iota_{\dd(\xi_0)}V^\sharp(\omega) =  \iota_{\big(\dd(\xi_0)\big)\wedge\omega}V =  V^\sharp\bigg(\big(\dd(\xi_0)\big)\wedge\omega\bigg).
\end{align*}
Meanwhile,
\begin{align*}
({\dstar} \circ\iota_{\oldpsi}+\iota_{\oldpsi}\circ{\dstar} )(s) &=({\dstar} \circ\iota_{\oldpsi} +\iota_{\oldpsi}\circ{\dstar} ) \big(V^\sharp(\omega)\big)\\
			&= {\dstar} (\iota_{\omega\wedge\oldpsi}V) +\iota_{\oldpsi}\big((-1)^{n-k} V^\sharp\partial_\ast\omega\big)\\
			&= {\dstar}  V^\sharp(\omega\wedge\oldpsi) +(-1)^{n-k}\iota_{\oldpsi} \iota_{\partial_\ast\omega}V\\
			&=-(-1)^{n-k}V^\sharp\partial_\ast(\omega\wedge\oldpsi) +(-1)^{n-k}V^\sharp \big((\partial_\ast\omega)\wedge\oldpsi\big)\\			&=-V^\sharp\big(\partial_\ast(\oldpsi\wedge\omega)+\oldpsi\wedge(\partial_\ast\omega)\big)\\
			&=V^\sharp\bigg(\big(L_{\oldpsi}-\partial_\ast(\oldpsi)\big)\omega\bigg),
	\end{align*}
where the final equality follows from the BV relation~\eqref{BV2}.
Combing these four equations, we conclude the proof.
\end{proof}

We introduce two operators $K$ and $L$.
\begin{align*}
K(x,y) \colon& \secA\rightarrow\secA, &
K(x,y)(z) &:=\iota_\oldpsi\big({\dstar} (x) \wedge y \wedge z\big)
-\iota_\oldpsi\big(x \wedge {\dstar} (y) \wedge z\big),
\\\mbox{and ~}~
L(\xi,\eta) \colon& \secAd\rightarrow\secAd, &
L(\xi,\eta)(\chi) &:=\iota_{{\threesecofA}(\eta,\chi)}\big(\dd(\xi)\big)
+\iota_{{\threesecofA}(\chi,\xi)}\big(\dd(\eta)\big),
\end{align*}
where $x,y,z\in\secA,\xi,\eta,\chi\in\secAd$.
\begin{lem}\label{KL-K}
For $K$ and $L$ as defined  above, we have
\begin{eqnarray*}
\trace\big(K(x,y)\big)
&=&-2\big\langle{\dstar}(x)|\iota_y\oldpsi\big\rangle
+2\big\langle{\dstar}(y)|\iota_x\oldpsi\big\rangle,\\
\trace\big(L(\xi,\eta)\big) &=&-2\big\langle\dd(\xi)|\iota_\eta{\threesecofA}\big\rangle
+2\big\langle\dd(\eta)|\iota_\xi{\threesecofA}\big\rangle,
\end{eqnarray*}
for all $x,y\in\secA,\xi,\eta\in\secAd$.
\end{lem}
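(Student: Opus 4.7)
The plan is to prove Lemma~\ref{KL-K} by a direct computation in a local frame. Because $K$ and $L$ are formally dual — swap $A$ with $A^\ast$, $d_A$ with $d_\ast$, and $\phi$ with $\tau$ — it suffices to carry out the argument once (say for $K$) and then transcribe it. I will describe the steps for $K$.

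Fix a local frame $\{e_1,\dots,e_n\}$ of $A$ and its dual coframe $\{e^1,\dots,e^n\}$ of $A^\ast$, so that
\[
\trace\bigl(K(x,y)\bigr) \;=\; \sum_{i=1}^{n} e^{i}\bigl(K(x,y)(e_i)\bigr).
\]
The first key step is to establish, for any $W\in\wedgeAdegree{3}$ and any $e_i\in\secA$, the identity
\[
\iota_{\phi}\bigl(W\wedge e_i\bigr) \;=\; \langle\phi\mid W\rangle\,e_i \;-\; \iota_{\iota_{e_i}\phi}(W).
\]
This is obtained by writing $\phi=\xi_1\wedge\xi_2\wedge\xi_3$ locally and iterating three times the anti-derivation rule $\iota_{\xi}(X\wedge Y)=(\iota_{\xi}X)\wedge Y+(-1)^{|X|}X\wedge(\iota_{\xi}Y)$; the bookkeeping collapses into exactly the above two terms because $\iota_{e_i}\phi$ is precisely the coefficient gathered from the three anti-derivation applications where the 1-form acts on the trailing factor $e_i$ rather than on $W$.

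Apply this formula to $W=\dstar(x)\wedge y-x\wedge\dstar(y)$, so that $K(x,y)(e_i)=\langle\phi\mid W\rangle\,e_i-\iota_{\iota_{e_i}\phi}(W)$. Now contract with $e^i$ and sum. The first piece contributes a scalar multiple of $\sum_i e^i(e_i)$. The second piece is a double contraction that can be rewritten using the Euler-type identities $\sum_i e^i\wedge\iota_{e_i}=k\,\id$ on $\OmegaAdegree{k}$ and $\sum_i e_i\wedge\iota_{e^i}=k\,\id$ on $\wedgeAdegree{k}$; this reduces $\sum_i e^i\bigl(\iota_{\iota_{e_i}\phi}W\bigr)$ to a constant multiple of $\langle\phi\mid W\rangle$. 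Combining the two contributions yields $\trace(K(x,y))$ as a multiple of $\langle\phi\mid W\rangle$. Finally, the splitting $\langle\phi\mid W\rangle=\langle\phi\mid\dstar(x)\wedge y\rangle-\langle\phi\mid x\wedge\dstar(y)\rangle$ is rewritten via the adjunction $\iota_{X\wedge Y}\phi=\iota_Y\iota_X\phi$, which gives $\langle\phi\mid\dstar(x)\wedge y\rangle=\langle\dstar(x)\mid\iota_y\phi\rangle$ and similarly for the other term.

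For $L$, the proof is obtained by swapping $A\leftrightarrow A^\ast$ throughout: one works with a local frame $\{e^i\}$ of $A^\ast$ and dual frame $\{e_i\}$ of $A$, replaces $\phi$ by $\threesecofA\in\wedgeAdegree{3}$ and $\dstar$ by $\dd$, and the same anti-derivation manipulation yields the analogous identity. The only subtlety is that $L$ is written using the pointwise contraction notation $\threesecofA(\eta,\chi)$, so one must first expand $\iota_{\threesecofA(\eta,\chi)}\dd(\xi)$ in the local frame before taking traces; this is a routine reorganization.

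The computations themselves are elementary; the genuine obstacle is purely combinatorial — carefully tracking the signs produced by the iterated anti-derivation and correctly counting the degree-dependent factors produced by the Euler identities, so that the final coefficients and signs match the stated formula.
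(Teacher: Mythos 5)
Your key contraction identity is correct: for a $3$-form $\oldpsi$, a trivector $W\in\wedgeAdegree{3}$ and $z\in\secA$ one indeed has
\[
\iota_{\oldpsi}(W\wedge z)=\langle \oldpsi|W\rangle\, z-\iota_{\iota_{z}\oldpsi}(W).
\]
The gap is the step you decline to carry out, namely the evaluation of the resulting coefficient. Summing against the dual frame, the first piece contributes $\langle\oldpsi|W\rangle\sum_i e^i(e_i)=n\,\langle\oldpsi|W\rangle$, which is \emph{rank-dependent}, while the Euler identity $\sum_i e^i\wedge\iota_{e_i}=3\,\id$ on $\OmegaAdegree{3}$ makes the second piece contribute $-3\,\langle\oldpsi|W\rangle$. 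With $W=\dstar(x)\wedge y-x\wedge\dstar(y)$ your method therefore yields
\[
\trace\big(K(x,y)\big)=(n-3)\big(\langle\dstar(x)|\iota_y\oldpsi\rangle-\langle\dstar(y)|\iota_x\oldpsi\rangle\big),
\]
which agrees with the stated constant $-2$ only when $n=1$. Since the factor $2$ is the entire content of the lemma (it is what produces the $\tfrac12$'s in front of the modular terms later), asserting that ``the final coefficients and signs match'' without computing them is not an omission one can wave away: your own outline, completed honestly, does not reproduce the statement.

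The root of the trouble is your opening claim that $K$ and $L$ are formally dual; with the definitions as written they are not. The operator $L(\xi,\eta)(\chi)=\iota_{\threesecofA(\eta,\chi)}(\dd\xi)+\iota_{\threesecofA(\chi,\xi)}(\dd\eta)$ first contracts $\eta,\chi$ into $\threesecofA$ to get a section of $A$ and only then inserts it into the $2$-form $\dd\xi$; its mirror under $A\leftrightarrow A^\ast$ is $z\mapsto\iota_{\oldpsi(y,z)}(\dstar x)+\iota_{\oldpsi(z,x)}(\dstar y)$, \emph{not} $z\mapsto\iota_{\oldpsi}(\dstar(x)\wedge y\wedge z)-\iota_{\oldpsi}(x\wedge\dstar(y)\wedge z)$. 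By the very identity you prove, these two operators differ by $\langle\oldpsi|\dstar(x)\wedge y\rangle\,z-\langle\dstar(x)|\iota_z\oldpsi\rangle\,y$ (minus the $x\leftrightarrow y$ term), whose trace is $(n-1)\langle\dstar(x)|\iota_y\oldpsi\rangle$ and accounts exactly for the discrepancy $(n-3)=-2+(n-1)$. For the $L$-shaped expression the Euler identity acts on degree-$2$ objects and gives the rank-independent factor $2$ of the statement; this is in effect what the paper's proof does, rewriting $\langle K(x,y)(\xi^a)|\theta_a\rangle$ as $-\langle\iota_{\theta_a}\dstar(x)|\iota_{\xi^a}\iota_y\oldpsi\rangle+\langle\iota_{\theta_a}\dstar(y)|\iota_{\xi^a}\iota_x\oldpsi\rangle$ and then using $\sum_a\langle\iota_{\theta_a}P|\iota_{\xi^a}\omega\rangle=2\langle P|\omega\rangle$ for $P\in\wedgeAdegree{2}$, $\omega\in\OmegaAdegree{2}$. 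That rewriting is the decisive (and nontrivial) step of the whole argument; your proposal neither performs it nor notices that, without it, the computation contradicts the formula being proved. The ``routine reorganization'' you invoke for $L$ is precisely where this structural asymmetry is hiding.
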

\begin{proof}
Let $\{\xi^1,\ldots,\xi^n\}$ be a local basis of $\secA$ and $\{\theta_1,\ldots,\theta_n\}$ be the dual basis of $\secAd$. Then we have
\begin{eqnarray*}
\trace\big(K(x,y)\big)
&=&\big\langle K(x,y)(\xi^a)|\theta_a\big\rangle\\
&=&-\big\langle \iota_{\theta_a}\big({\dstar}(x)\big)|\iota_{\xi^a}\iota_y\oldpsi\big\rangle
+\big\langle \iota_{\theta_a}\big({\dstar}(y)\big)|\iota_{\xi^a}\iota_x\oldpsi\big\rangle\\
&=&-2\big\langle{\dstar}(x)|\iota_y\oldpsi\big\rangle
+2\big\langle{\dstar}(y)|\iota_x\oldpsi\big\rangle.
\end{eqnarray*}
The second equation for $\trace\big(L(\xi,\eta)\big)$ can be similarly examined.
\end{proof}

\begin{lem}
Assume that  $(A,\BrA{\tobefilledin}{\tobefilledin}, \BrAd{\tobefilledin}{\tobefilledin}, a_A, a_\ast, {\threesecofA},\oldpsi)$ is a proto-bialgebroid. Then
\begin{align}
\big(\dd(\xi_0)(x,y)\big)\Omega\otimes s
&= (L_{\oldpsi(x,y)}\Omega)\otimes s
-\Omega\otimes L_{a_*(\oldpsi(x,y))}(s)
+2\left(\big\langle{\dstar}(x)|\iota_y\oldpsi\big\rangle
-\big\langle{\dstar}(y)|\iota_x\oldpsi\big\rangle\right)\Omega\otimes s
,\nonumber\\
\label{X0-sV}
\big({\dstar}(X_0)(\xi,\eta)\big)s\otimes V
&= s\otimes (L_{{\threesecofA}(\xi,\eta)}V)
-(L_{a_A({\threesecofA}(\xi,\eta))}s)\otimes V
+2\left(\big\langle\dd(\xi)|\iota_\eta{\threesecofA}\big\rangle
-\big\langle\dd(\eta)|\iota_\xi{\threesecofA}\big\rangle\right) s\otimes V,
\end{align}
for all $x,y\in\secA,\xi,\eta\in\secAd$.
\end{lem}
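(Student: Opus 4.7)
The two identities are mirror images of each other under the proto-bialgebroid involution swapping $A\leftrightarrow A^\ast$, $\oldpsi\leftrightarrow\threesecofA$, $\Omega\leftrightarrow V$, $\xi_0\leftrightarrow X_0$, $\dd\leftrightarrow\dstar$ and $\partial\leftrightarrow\partial_\ast$. So I focus on the first equation; the second then follows by symmetry.

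\textbf{Plan.} Starting from the Cartan-type formula
\[
(\dd\xi_0)(x,y) = a_A(x)\langle\xi_0|y\rangle - a_A(y)\langle\xi_0|x\rangle - \langle\xi_0|\BrA{x}{y}\rangle,
\]
I substitute the defining relation \eqref{xi_0}, $L_z(s\otimes V) = \langle\xi_0|z\rangle\,s\otimes V$, to reach
\[
(\dd\xi_0)(x,y)\,s\otimes V = \bigl([L_x, L_y] - L_{\BrA{x}{y}}\bigr)(s\otimes V).
\]
Then, distributing $L_z(s\otimes V) = (L_{a_A(z)}s)\otimes V + s\otimes L_z V$ and invoking $L_z V = \partial(z)V$ from \eqref{LLL2}, the computation decouples into an $s$-factor and a $V$-factor. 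The $s$-factor equals $L_{[a_A(x), a_A(y)] - a_A(\BrA{x}{y})}(s)\otimes V$; by the Courant-algebroid compatibility between anchor and Dorfman bracket --- whose $A$-component is $\BrA{x}{y} - \iota_y\iota_x\oldpsi$ --- this becomes $-L_{a_\ast(\iota_y\iota_x\oldpsi)}(s)\otimes V$, which accounts for the $\Omega\otimes L_{a_\ast(\oldpsi(x,y))}(s)$ term on the right-hand side. Meanwhile, the $V$-factor collapses to the scalar $\bigl(a_A(x)\partial(y) - a_A(y)\partial(x) - \partial(\BrA{x}{y})\bigr)\,s\otimes V$.

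\textbf{Main obstacle.} What remains is to establish the scalar identity
\[
a_A(x)\partial(y) - a_A(y)\partial(x) - \partial(\BrA{x}{y}) = \partial_\ast(\oldpsi(x,y)) + 2\bigl(\langle\dstar(x)|\iota_y\oldpsi\rangle - \langle\dstar(y)|\iota_x\oldpsi\rangle\bigr),
\]
and then to transport the resulting equation from $s\otimes V$ to $\Omega\otimes s$ via $\langle\Omega|V\rangle = 1$ together with \eqref{LLL1} and \eqref{LLL3}. This scalar identity is the heart of the proof: it will be obtained by taking a suitable trace of the proto-bialgebroid compatibility axiom
\[
\dstar(\BrA{x}{y}) = \BrA{\dstar(x)}{y} + \BrA{x}{\dstar(y)} + \iota_{\iota_y\iota_x\oldpsi}\threesecofA,
\]
unpacked via the BV relations \eqref{BV} and \eqref{BV2}; the factor $2$ in front of the pairings $\langle\dstar(x)|\iota_y\oldpsi\rangle$ arises precisely from the trace identity $\trace(K(x,y)) = -2\langle\dstar(x)|\iota_y\oldpsi\rangle + 2\langle\dstar(y)|\iota_x\oldpsi\rangle$ of Lemma~\ref{KL-K}. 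Managing this trace extraction, and keeping track of the duality between $\dstar$ and $\partial_\ast$ throughout the bookkeeping, is the main technical hurdle.
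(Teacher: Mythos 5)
Your reduction is sound and in fact coincides with the paper's own route: the identity $(\dd\xi_0)(x,y)\,s\otimes V=\bigl([L_x,L_y]-L_{\BrA{x}{y}}\bigr)(s\otimes V)$ obtained from \eqref{xi_0}, the decoupling into an $s$-factor and a $V$-factor (the cross terms are symmetric in $x,y$ and cancel in the commutator), and the treatment of the $s$-factor via the anchor compatibility $[a_A(x),a_A(y)]-a_A(\BrA{x}{y})=-a_\ast(\oldpsi(x,y))$ all match the paper. The gap is in the step you yourself identify as the heart of the proof. The left-hand side of your residual scalar identity, $a_A(x)\partial(y)-a_A(y)\partial(x)-\partial(\BrA{x}{y})$, equals $\BrA{\partial x}{y}+\BrA{x}{\partial y}-\partial(\BrA{x}{y})=\partial^2(x\wedge y)$ by the BV relation \eqref{BV}; it measures the failure of the Jacobi identity of $\BrA{\tobefilledin}{\tobefilledin}$ and is therefore governed by axiom $(1)$ of Definition~\ref{Def:Proto-bialgebroids} (the $\oldpsi$-controlled Jacobiator), \emph{not} by the compatibility axiom $(3)$ that you propose to trace. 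Axiom $(3)$ carries no information about this Jacobiator: over a point take $\BrAd{\tobefilledin}{\tobefilledin}=0$, $\threesecofA=0$, $\oldpsi=0$, and the non-Lie skew-symmetric bracket on $\R^3$ with $\BrA{e_1}{e_2}=e_1$, $\BrA{e_2}{e_3}=e_2$, $\BrA{e_1}{e_3}=0$; then axioms $(2)$--$(5)$ hold trivially and the right-hand side of your scalar identity vanishes, yet $\partial^2(e_2\wedge e_3)=-\partial(e_2)=1\neq 0$. Hence no amount of unpacking axiom $(3)$ through the BV relations can yield the identity you need.

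The repair is to use axiom $(1)$ in the form the paper does: it is equivalent (together with the anchor compatibility on functions) to the operator identity $L_{\BrA{x}{y}}-L_xL_y+L_yL_x=L_{\oldpsi(x,y)}+K(x,y)$ on $\Gamma(\wedge^\bullet A)$, which one then evaluates on the top element $V$. There the $\CinfM$-linear endomorphism $K(x,y)$, extended as a derivation, acts by its trace --- this is exactly where Lemma~\ref{KL-K}, which you correctly anticipated, enters --- while $L_{\oldpsi(x,y)}V$ is converted by \eqref{LLL6} into the $(L_{\oldpsi(x,y)}\Omega)\otimes s$ term, i.e.\ into your $\partial_\ast(\oldpsi(x,y))$. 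With this substitution for your appeal to axiom $(3)$, the rest of your bookkeeping (the passage from $s\otimes V$ back to $\Omega\otimes s$ using $\langle\Omega|V\rangle=1$, and the $A\leftrightarrow A^\ast$ symmetry giving the second equation) goes through as in the paper.
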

\begin{proof}
We first prove
\begin{eqnarray}\label{xi0-1}
\big( \dd(\xi_0) (x,y)\big)s\otimes V &=&-(L_{\BrA{x}{y}}-L_xL_y+L_yL_x)(s\otimes V).
\end{eqnarray}
In fact, by Equations \eqref{X_0} and \eqref{xi_0}, we have
\begin{eqnarray*}
&&(L_{\BrA{x}{y}}-L_xL_y+L_yL_x)(s\otimes V)\\
&=&\langle\xi_0|\BrA{x}{y}\rangle s\otimes V - L_x\big(\langle\xi_0|y\rangle s\otimes V\big)
+L_y\big(\langle\xi_0|x\rangle s\otimes V\big) \\
&=&\langle\xi_0|\BrA{x}{y}\rangle s\otimes V - a_A(x)\big(\langle\xi_0|y\rangle\big) s\otimes V
-\langle\xi_0|y\rangle\langle\xi_0|x\rangle s\otimes V\\
&&\qquad+a_A(y)\big(\langle\xi_0|x\rangle\big) s\otimes V +\langle\xi_0|x\rangle\langle\xi_0|y\rangle s\otimes V \\
&=&\bigg(\langle\xi_0|\BrA{x}{y}\rangle - a_A(x)\big(\langle\xi_0|y\rangle\big)
+a_A(y)\big(\langle\xi_0|x\rangle\big)\bigg) s\otimes V \\
&=&-\big(\dd(\xi_0)(x,y)\big)s\otimes V.
\end{eqnarray*}
Since for all $x,y \in \secA$
\[
L_{\BrA{x}{y}} - L_xL_y+L_yL_x  = L_{\oldpsi(x,y)} + K(x,y) \colon \Gamma(\wedge^\bullet A) \to \Gamma(\wedge^\bullet A),
\]
it follows that
\begin{eqnarray*}
&&\big(\dd(\xi_0)\big)(x,y)(\Omega \otimes s)\otimes V \\
&\equalbyreason{\eqref{xi0-1}} &
-\Omega \otimes (L_{\BrA{x}{y}}-L_xL_y+L_yL_x)(s\otimes V)\\
&=&-\Omega \otimes \big((L_{a_A(\BrA{x}{y})}-L_{a_A(x)}L_{a_A(y)} + L_{a_A(y)} L_{a_A(x)})s\big)\otimes V\\
&&\quad-\Omega \otimes s\otimes\big((L_{\BrA{x}{y}}-L_xL_y+L_yL_x)V\big)\\
& = & -\Omega \otimes (L_{a_\ast(\oldpsi(x,y))}s)\otimes V -\Omega \otimes s\otimes\big((L_{\oldpsi(x,y)}+K(x,y))V\big)\\
&\equalbyreason{\eqref{LLL6}} & -\Omega \otimes (L_{a_\ast(\oldpsi(x,y))}s)\otimes V
+(L_{\oldpsi(x,y)}\Omega)\otimes s\otimes V -\trace\big(K(x,y)\big)\Omega \otimes s\otimes V\\
&\equalbyreason{Lemma~\ref{KL-K}} &
-\Omega \otimes (L_{a_\ast(\oldpsi(x,y))}s)\otimes V
+(L_{\oldpsi(x,y)}\Omega)\otimes s\otimes V\\
&&\qquad-\left(-2\big\langle{\dstar}(x)|\iota_y\oldpsi\big\rangle
+2\big\langle{\dstar}(y)|\iota_x\oldpsi\big\rangle\right)\Omega \otimes s\otimes V,
\end{eqnarray*}
Thus, we have
\[
\big(\dd(\xi_0)(x,y)\big)\Omega \otimes s =(L_{\oldpsi(x,y)}\Omega) \otimes s
-\Omega\otimes (L_{a_*(\oldpsi(x,y))}s) +2\left(\big\langle{\dstar}(x)|\iota_y\oldpsi\big\rangle
-\big\langle{\dstar}(y)|\iota_x\oldpsi\big\rangle\right)\Omega \otimes s.
\]
The second equation~\eqref{X0-sV} can be verified similarly and thus omitted.
\end{proof}

\begin{lem}\label{Main-prop-1}
Assume that $(A,\BrA{\tobefilledin}{\tobefilledin}, \BrAd{\tobefilledin}{\tobefilledin}, a_A, a_\ast, {\threesecofA},\oldpsi)$ is a proto-bialgebroid. Then
	\begin{eqnarray}\label{Main-prop-1-1}
	\partial({\threesecofA})  &=&\frac{1}{2}\iota_{\xi_0}{\threesecofA} +\frac{1}{2}{\dstar}(X_0),\\
		\label{Main-prop-1-2}
	\partial_\ast(\oldpsi) &=&\frac{1}{2}\iota_{X_0}\oldpsi +\frac{1}{2}\dd(\xi_0).
	\end{eqnarray}
\end{lem}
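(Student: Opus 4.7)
The plan is to reduce to a single identity by a duality argument and then evaluate on arbitrary pairs of covectors.

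Observe that the involution exchanging the roles of $A$ and $A^\ast$ (and correspondingly $\dd \leftrightarrow \dstar$, $\partial \leftrightarrow \partial_\ast$, $\threesecofA \leftrightarrow \oldpsi$, $X_0 \leftrightarrow \xi_0$) preserves the notion of proto-bialgebroid: all five axioms of Definition \ref{Def:Proto-bialgebroids} are visibly self-dual up to this relabelling. Moreover it sends equation \eqref{Main-prop-1-1} to equation \eqref{Main-prop-1-2}. Hence it is enough to establish \eqref{Main-prop-1-1}.

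Both sides of \eqref{Main-prop-1-1} lie in $\wedgeAdegree{2}$, so I would check equality after pairing with an arbitrary $\xi\wedge\eta \in \OmegaAdegree{2}$. For the first term on the right, setting $\threesecofA(\xi,\eta) := \iota_\eta\iota_\xi\threesecofA \in \secA$, one has
\[
(\iota_{\xi_0}\threesecofA)(\xi,\eta) \;=\; \langle\xi_0,\threesecofA(\xi,\eta)\rangle,
\]
which by the defining relation \eqref{xi_0} is the scalar by which $L_{\threesecofA(\xi,\eta)}$ acts on $s\otimes V$. For the second term on the right, the previous lemma's identity \eqref{X0-sV} expresses $(\dstar X_0)(\xi,\eta)$ as a combination of the Lie derivatives $L_{\threesecofA(\xi,\eta)}V$ and $L_{a_A(\threesecofA(\xi,\eta))}s$, augmented by the trace corrections $2\langle\dd\xi|\iota_\eta\threesecofA\rangle - 2\langle\dd\eta|\iota_\xi\threesecofA\rangle$.

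For the left-hand side $\partial(\threesecofA)(\xi,\eta)$, I would use the commutation formula \eqref{Eqn:V-d-partial}, namely $\partial\threesecofA = -V^\sharp\bigl(\dd(\iota_\threesecofA\Omega)\bigr)$, and unfold $\dd(\iota_\threesecofA\Omega)$ through the Cartan-type formula defining $\dd$, exploiting the closedness axiom $\dstar(\threesecofA) = 0$. A more hands-on alternative is to pick a local frame $\{x_a\}$ of $A$ with dual $\{\xi^a\}$, write $\threesecofA = \tfrac16\threesecofA^{abc}\, x_a\wedge x_b\wedge x_c$, and iteratively apply the BV relation \eqref{BV} to reduce $\partial(\threesecofA)$ to a combination of divergences $\partial(x_a)$ and brackets $\BrA{x_a}{x_b}$; contracting with $\xi\wedge\eta$ then yields a principal term essentially equal to $\partial(\threesecofA(\xi,\eta))$ plus trace corrections of the same form as those in \eqref{X0-sV}.

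To finish, one converts $\partial(\threesecofA(\xi,\eta))\cdot V = L_{\threesecofA(\xi,\eta)}V$ via \eqref{LLL2}, then applies \eqref{LLL5} and \eqref{xi_0} to recognise $\langle\xi_0,\threesecofA(\xi,\eta)\rangle$ on the right. The trace corrections appear symmetrically on both sides with a factor of $2$, so the coefficient $\tfrac12$ in \eqref{Main-prop-1-1} makes them cancel exactly. The \emph{main obstacle} is precisely this bookkeeping: because $\partial$ is a second-order differential operator rather than a derivation, its action on $\threesecofA$ carries trace-type sub-leading contributions, and matching these against the analogous corrections coming from \eqref{X0-sV} is the key computational hurdle; once the matching is verified, \eqref{Main-prop-1-1} follows, and \eqref{Main-prop-1-2} is then immediate by duality.
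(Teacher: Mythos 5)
Your plan is essentially the paper's own proof: evaluate both sides on a pair $(\xi,\eta)$ against $s\otimes V$, expand $\partial(\threesecofA)$ via the BV relation \eqref{BV} together with \eqref{LLL2} and the rule $L_{fx}V=fL_xV-a_A(x)(f)V$, read off $(\iota_{\xi_0}\threesecofA)(\xi,\eta)$ from \eqref{xi_0} and $(\dstar X_0)(\xi,\eta)$ from \eqref{X0-sV}, and observe that the trace-type corrections match precisely because of the factor $\tfrac12$, with \eqref{Main-prop-1-2} following by the symmetric argument. (Only your passing suggestion that $\dstar(\threesecofA)=0$ helps in unfolding $\dd(\iota_{\threesecofA}\Omega)$ is off the mark --- that axiom plays no role here --- but you discard that route in favour of the BV computation, which is the one the paper carries out.)
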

\begin{proof}
First of all, by Equation~\eqref{xi_0}, we have
\begin{eqnarray*}
				(\iota_{\xi_0}{\threesecofA})(\xi,\eta)s\otimes V
				&=&(L_{a_A({\threesecofA}(\xi,\eta))}s)\otimes V
				+s\otimes (L_{{\threesecofA}(\xi,\eta)}V).
\end{eqnarray*}
Using Equation \eqref{BV}, we obtain
\begin{eqnarray*}
\partial({\threesecofA}) &=&{\threesecofA}_1\wedge\BrA{{\threesecofA}_2}{{\threesecofA}_3}
+\BrA{{\threesecofA}_1}{{\threesecofA}_3}\wedge{\threesecofA}_2
-\BrA{{\threesecofA}_1}{{\threesecofA}_2}\wedge{\threesecofA}_3\nonumber\\
&&\qquad+\partial({\threesecofA}_1)\,{\threesecofA}_2\wedge{\threesecofA}_3
-\partial({\threesecofA}_2)\,{\threesecofA}_1\wedge{\threesecofA}_3
+\partial({\threesecofA}_3)\,{\threesecofA}_1\wedge{\threesecofA}_2.
\end{eqnarray*}
Hence,
\begin{eqnarray*}
		\partial({\threesecofA})(\xi,\eta)s\otimes V
				&\equalbyreason{\eqref{LLL2}} &\big({\threesecofA}_1\wedge\BrA{{\threesecofA}_2}{{\threesecofA}_3}
				+\BrA{{\threesecofA}_1}{{\threesecofA}_3}\wedge{\threesecofA}_2
				-\BrA{{\threesecofA}_1}{{\threesecofA}_2}\wedge{\threesecofA}_3\big)(\xi,\eta)s\otimes V\\
				&&\qquad+({\threesecofA}_2\wedge{\threesecofA}_3)(\xi,\eta) s\otimes L_{{\threesecofA}_1}V
				-({\threesecofA}_1\wedge{\threesecofA}_3)(\xi,\eta)s\otimes L_{{\threesecofA}_2}V\\
				&&\qquad\qquad+({\threesecofA}_1\wedge{\threesecofA}_2)(\xi,\eta)s\otimes L_{{\threesecofA}_3}V\\
				&=&\big({\threesecofA}_1\wedge\BrA{{\threesecofA}_2}{{\threesecofA}_3}
				+\BrA{{\threesecofA}_1}{{\threesecofA}_3}\wedge{\threesecofA}_2
				-\BrA{{\threesecofA}_1}{{\threesecofA}_2}\wedge{\threesecofA}_3\big)(\xi,\eta)s\otimes V\\
				&&+s\otimes L_{{\threesecofA}(\xi,\eta)} V
				+\bigg(a_A({\threesecofA}_1)\big(({\threesecofA}_2\wedge{\threesecofA}_3)(\xi,\eta)\big)
				-a_A({\threesecofA}_2)\big(({\threesecofA}_1\wedge{\threesecofA}_3)(\xi,\eta)\big)\\ &&\qquad\qquad+a_A({\threesecofA}_3)\big(({\threesecofA}_1\wedge{\threesecofA}_2)(\xi,\eta)\big)\bigg)
				s\otimes V\\
				&=& s\otimes L_{{\threesecofA}(\xi,\eta)} V
				+\big(\big\langle \dd(\xi)|\iota_\eta{\threesecofA}\big\rangle
				-\big\langle \dd(\eta)|\iota_\xi{\threesecofA}\big\rangle\big)s\otimes V.
			\end{eqnarray*}
		Here we have used the identity
\[
	L_{fx}V = fL_xV-a_A(x)(f)\, V,\quad\forall x\in \secA.
\]
Then using Equation \eqref{X0-sV}, we obtain
$\partial({\threesecofA}) =\frac{1}{2}\iota_{\xi_0}{\threesecofA} +\frac{1}{2}{\dstar}(X_0)$. 			
The proof of the second identity is similar and thus omitted.
\end{proof}

\begin{Cor}\label{Main-prop-22}
Assume that $(A,\BrA{\tobefilledin}{\tobefilledin}, \BrAd{\tobefilledin}{\tobefilledin}, a_A, a_\ast, {\threesecofA},\oldpsi)$ is a proto-bialgebroid. Then	
\begin{eqnarray}\label{Eqt:Main-prop-2}
\partial^2 = {\dstar}\circ\iota_{\oldpsi} +\iota_{\oldpsi}\circ{\dstar} +\frac{1}{2}\iota_{\iota_{X_0}\oldpsi}
	+\frac{1}{2}\iota_{\dd(\xi_0)}  \colon \wedgeAdegree{\bullet} \to \wedgeAdegree{\bullet-2}.
\end{eqnarray}
\end{Cor}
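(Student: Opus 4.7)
My plan is to deduce Corollary~\ref{Main-prop-22} from Lemma~\ref{Lem:temp327} together with Lemma~\ref{Main-prop-1} and one additional operator identity arising directly from the proto-bialgebroid axioms. First, I would apply Lemma~\ref{Lem:temp327} with $s = V^\sharp(\omega)$ as $\omega$ ranges over $\OmegaAdegree{\bullet}$. Because $V^\sharp$ is a bundle isomorphism, the desired identity~\eqref{Eqt:Main-prop-2} is equivalent to the vanishing, for every $\omega \in \OmegaAdegree{\bullet}$, of the expression
\[
\big(\dd^2 + L_{\oldpsi} - \partial_\ast(\oldpsi) + \tfrac{1}{2}\iota_{X_0}\oldpsi + \tfrac{1}{2}\dd(\xi_0)\big)\omega.
\]

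Second, the last three terms in the above expression act on $\omega$ as wedge product with the 2-form
$-\partial_\ast(\oldpsi) + \tfrac{1}{2}\iota_{X_0}\oldpsi + \tfrac{1}{2}\dd(\xi_0) \in \OmegaAdegree{2}$, which vanishes identically by Equation~\eqref{Main-prop-1-2} of Lemma~\ref{Main-prop-1}. Therefore the corollary reduces to the single operator identity
\[
\dd^2 + L_{\oldpsi} = 0 \qquad \text{on } \OmegaAdegree{\bullet}.
\]

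Third, I would establish this identity using axioms~1 and~4 of Definition~\ref{Def:Proto-bialgebroids}. Both $\dd^2$ and $L_{\oldpsi} = [\oldpsi, \cdot]_\ast$ are degree-$2$ graded derivations of the wedge product on $\OmegaAdegree{\bullet}$: for $\dd^2$ this is immediate because the graded commutator of two degree-$1$ derivations is again a derivation, and for $L_{\oldpsi}$ it is the graded Leibniz rule for the Gerstenhaber-type bracket $[\cdot,\cdot]_\ast$ that follows from the BV relation~\eqref{BV2}. Hence it suffices to verify $\dd^2 + L_{\oldpsi} = 0$ on the generators $\CinfM$ and $\secAd$. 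For $\alpha \in \secAd$, unpacking the defining formula of $\dd$ shows that $(\dd^2\alpha)(x,y,z)$ equals the pairing of $\alpha$ with the Jacobiator of $\BrA{\tobefilledin}{\tobefilledin}$; axiom~1 rewrites this Jacobiator as $\dstar\big(\oldpsi(x,y,z)\big) + \iota_{\oldpsi}\big(\dstar(x \wedge y \wedge z)\big)$, and a short computation using~\eqref{BV2} identifies the resulting expression with $-(L_{\oldpsi}\alpha)(x,y,z)$. The check on $\CinfM$ is easier and uses axiom~4, namely $\dd(\oldpsi)=0$, to close the identity.

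The main obstacle will be the sign and degree bookkeeping in the third step, especially expanding $L_{\oldpsi}\alpha$ via~\eqref{BV2} and matching each component term by term against the Jacobiator side of axiom~1. Once this is organized, the graded-derivation property propagates $\dd^2 + L_{\oldpsi}=0$ from generators to all of $\OmegaAdegree{\bullet}$, completing the proof of Corollary~\ref{Main-prop-22}.
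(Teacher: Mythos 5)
Your proposal is correct and follows essentially the same route as the paper's proof: apply Lemma~\ref{Lem:temp327}, cancel the three zero-order terms via Equation~\eqref{Main-prop-1-2} of Lemma~\ref{Main-prop-1}, and invoke the operator identity $\dd^2+L_{\oldpsi}=0$, which the paper simply reads off as the operator form of axiom~(1) of Definition~\ref{Def:Proto-bialgebroids}. One small caveat on the extra detail you supply for that last identity: $(\dd^2\alpha)(x,y,z)$ is not just the pairing of $\alpha$ with the Jacobiator but also contains the anchor-anomaly terms $\sum_{\mathrm{cyc}}\big([a_A(x),a_A(y)]-a_A(\BrA{x}{y})\big)\alpha(z)$ (and the check on $\CinfM$ rests on this anchor compatibility rather than on $\dd(\oldpsi)=0$), so the generator-by-generator verification needs that additional input — a point the paper itself sidesteps by asserting the identity outright.
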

\begin{proof}
By assumption, the Jacobiator of the bracket $[\cdot,\cdot]_A$ is controlled by $\phi$ and $d_\ast$, i.e.,
 \[
 \dd^2+L_{\oldpsi}=0,
 \]
 where $L_{\oldpsi}=\BrAd{\oldpsi}{\cdot}$. Combining with Equation~\eqref{Main-prop-1-2} and the identity in Lemma \ref{Lem:temp327}, we obtain the desired result.
\end{proof}

Without loss of generality, suppose that ${\threesecofA}= \sum_i {\threesecofA}_{i_1} \wedge{\threesecofA}_{i_2} \wedge{\threesecofA}_{i_3}$ and $\oldpsi=\sum_j \oldpsi_{j_1}\wedge\oldpsi_{j_2}\wedge\oldpsi_{j_3}$, where ${\threesecofA}_i\in \secA $ and $\oldpsi_j \in \secAd  $.
We introduce four maps
\begin{eqnarray*}
Q_1 &:=&\sum_j\big( (\iota_{\oldpsi_{j_3}}\iota_{\oldpsi_{j_2}}{\threesecofA})\wedge \iota_{\oldpsi_{j_1}}
-(\iota_{\oldpsi_{j_3}}\iota_{\oldpsi_{j_1}}{\threesecofA})\wedge \iota_{\oldpsi_{j_2}}
+(\iota_{\oldpsi_{j_2}}\iota_{\oldpsi_{j_1}}{\threesecofA})\wedge \iota_{\oldpsi_{j_3}}\big)
\colon \wedgeAdegree{\bullet}\rightarrow\wedgeAdegree{\bullet},\\
Q_2 &:=&\sum_j\big( -(\iota_{\oldpsi_{j_3}}{\threesecofA})\wedge \iota_{\oldpsi_{j_2}}\iota_{\oldpsi_{j_1}} +(\iota_{\oldpsi_{j_2}}{\threesecofA})\wedge \iota_{\oldpsi_{j_3}}\iota_{\oldpsi_{j_1}} - (\iota_{\oldpsi_{j_1}}{\threesecofA})\wedge \iota_{\oldpsi_{j_3}}\iota_{\oldpsi_{j_2}}\big)
\colon \wedgeAdegree{\bullet}\rightarrow\wedgeAdegree{\bullet}, \\
Q_3 &:=&\sum_i\big( (\iota_{{\threesecofA}_{i_3}}\iota_{{\threesecofA}_{i_2}}\oldpsi)\wedge \iota_{{\threesecofA}_{i_1}} -(\iota_{{\threesecofA}_{i_3}}\iota_{{\threesecofA}_{i_1}}\oldpsi)\wedge \iota_{{\threesecofA}_{i_2}} +(\iota_{{\threesecofA}_{i_2}}\iota_{{\threesecofA}_{i_1}}\oldpsi)\wedge \iota_{{\threesecofA}_{i_3}}\big) \colon \OmegaAdegree{\bullet}\rightarrow\OmegaAdegree{\bullet},\\
Q_4 &:=&\sum_i\big( -(\iota_{{\threesecofA}_{i_3}}\oldpsi)\wedge \iota_{{\threesecofA}_{i_2}}\iota_{{\threesecofA}_{i_1}} + (\iota_{{\threesecofA}_{i_2}}\oldpsi)\wedge \iota_{{\threesecofA}_{i_3}}\iota_{{\threesecofA}_{i_1}} -(\iota_{{\threesecofA}_{i_1}}\oldpsi)\wedge \iota_{{\threesecofA}_{i_3}}\iota_{{\threesecofA}_{i_2}}\big)
\colon \OmegaAdegree{\bullet}\rightarrow\OmegaAdegree{\bullet}.
\end{eqnarray*}
\begin{lem}\label{Q1Q3}
The operators $Q_1$ and $Q_3$ satisfy
\begin{eqnarray*}
\big\langle Q_1(x)|\xi\big\rangle +\big\langle x|Q_3(\xi)\big\rangle
=2\langle \iota_x\oldpsi | \iota_\xi{\threesecofA}\rangle,
\end{eqnarray*}
for all $x\in\secA$ and $\xi\in\secAd$.
\end{lem}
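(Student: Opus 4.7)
My plan is to identify $Q_1(x)$ and $Q_3(\xi)$ with intrinsic contractions and then reduce the claimed identity to a standard adjoint relation for interior products.

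First, I would verify that for $x\in\secA$ and $\xi\in\secAd$ the three terms in the definition of $Q_1$ and $Q_3$ assemble into the intrinsic expressions
\[
Q_1(x) \;=\; \iota_{\iota_x\oldpsi}\,\threesecofA \;\in\; \secA,
\qquad
Q_3(\xi) \;=\; \iota_{\iota_\xi\threesecofA}\,\oldpsi \;\in\; \secAd.
\]
Indeed, expanding the standard contraction
\[
\iota_x\big(\oldpsi_{j_1}\wedge\oldpsi_{j_2}\wedge\oldpsi_{j_3}\big)
= \oldpsi_{j_1}(x)\,\oldpsi_{j_2}\wedge\oldpsi_{j_3} - \oldpsi_{j_2}(x)\,\oldpsi_{j_1}\wedge\oldpsi_{j_3} + \oldpsi_{j_3}(x)\,\oldpsi_{j_1}\wedge\oldpsi_{j_2}
\]
and then applying the convention $\iota_{\alpha\wedge\beta}=\iota_\beta\circ\iota_\alpha$ to contract $\threesecofA$ against each resulting 2-form factor recovers, summand by summand, the three terms in $Q_1(x)$; the identity for $Q_3(\xi)$ is the mirror statement.

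Second, I would establish the adjoint identities
\[
\langle \iota_{\iota_x\oldpsi}\threesecofA \,|\, \xi\rangle
\;=\; \langle \iota_x\oldpsi \,|\, \iota_\xi\threesecofA\rangle
\;=\; \langle x \,|\, \iota_{\iota_\xi\threesecofA}\oldpsi\rangle.
\]
For the leftmost equality, applying $\iota_\xi$ and using $\iota_\xi\circ\iota_{\iota_x\oldpsi} = \iota_{(\iota_x\oldpsi)\wedge\xi}$ presents the left-hand side as the total pairing of $\threesecofA\in\wedgeAdegree{3}$ with the 3-form $(\iota_x\oldpsi)\wedge\xi$; by the determinantal formula
\[
\langle \xi_1\wedge\cdots\wedge\xi_n \,|\, x_1\wedge\cdots\wedge x_n\rangle = \det\big(\xi_i(x_j)\big)_{i,j},
\]
this is precisely $\langle \iota_x\oldpsi \,|\, \iota_\xi\threesecofA\rangle$. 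The rightmost equality is the symmetric statement obtained by swapping the roles of $A$ and $A^\ast$. Adding the two outer quantities then gives $\langle Q_1(x)|\xi\rangle + \langle x|Q_3(\xi)\rangle = 2\langle \iota_x\oldpsi|\iota_\xi\threesecofA\rangle$, which is the claim.

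The only bookkeeping subtlety is to confirm that the various signs match the conventions fixed in the preliminaries (in particular the order reversal in $\iota_{x_1\wedge\cdots\wedge x_n}$). A clean sanity check is to pass to a local frame $\{e_a\}$ of $A$ with dual $\{e^a\}$, write $\threesecofA=\tfrac16 T^{abc}e_a\wedge e_b\wedge e_c$ and $\oldpsi=\tfrac16 P_{abc}e^a\wedge e^b\wedge e^c$, and verify that both $\langle Q_1(x)|\xi\rangle$ and $\langle \iota_x\oldpsi|\iota_\xi\threesecofA\rangle$ evaluate to $\tfrac12\,x^a\xi_d\,P_{abc}T^{dbc}$ (using the cyclic symmetry $T^{bcd}=T^{dbc}$). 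I do not anticipate any substantive obstacle beyond this sign check, and the argument makes no use of the proto-bialgebroid axioms — it is a purely multilinear identity between $\threesecofA$ and $\oldpsi$.
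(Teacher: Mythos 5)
Your proof is correct. The identifications $Q_1(x)=\iota_{\iota_x\oldpsi}{\threesecofA}$ and $Q_3(\xi)=\iota_{\iota_\xi{\threesecofA}}\oldpsi$ on degree-one arguments do hold under the paper's conventions ($\iota_{\alpha\wedge\beta}=\iota_\beta\circ\iota_\alpha$ and the determinantal pairing), and the adjunction $\big\langle(\iota_x\oldpsi)\wedge\xi\,|\,{\threesecofA}\big\rangle=\big\langle\iota_x\oldpsi\,|\,\iota_\xi{\threesecofA}\big\rangle$ together with the symmetry of the degree-two pairing yields the factor $2$. The paper proves the identity by the same kind of direct multilinear computation but organizes it differently: it expands $Q_1(x)$ and $Q_3(\xi)$ over decomposable summands and passes through degree-four pairings, writing $\langle Q_1(x)|\xi\rangle+\langle x|Q_3(\xi)\rangle=\langle x\wedge{\threesecofA}|\oldpsi\wedge\xi\rangle+\langle x|\xi\rangle\langle{\threesecofA}|\oldpsi\rangle+\langle\xi\wedge\oldpsi|{\threesecofA}\wedge x\rangle+\langle\xi|x\rangle\langle\oldpsi|{\threesecofA}\rangle$ (a Laplace-type expansion) before collapsing this to $2\langle\iota_x\oldpsi|\iota_\xi{\threesecofA}\rangle$. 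Your route through the intrinsic contraction is a bit cleaner and avoids the rank-four determinants; it is in fact exactly the chain of identities the authors themselves invoke later, in the proof of Proposition \ref{Main-pre-thm-2}, where they write $\langle\iota_x\oldpsi|\iota_\xi{\threesecofA}\rangle=\iota_\xi\iota_{\iota_x\oldpsi}{\threesecofA}=\langle\iota_{\iota_x\oldpsi}{\threesecofA}|\xi\rangle$ and identify $\iota_{\iota_x\oldpsi}{\threesecofA}$ with $Q_1$ acting on $x$. You are also right that the statement is purely multilinear and independent of the proto-bialgebroid axioms. No gaps.
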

\begin{proof}
For all $x\in\secA$ and $\xi\in\secAd$, we can examine that
\begin{eqnarray*}
\big\langle Q_1(x)|\xi\big\rangle
+\big\langle x|Q_3(\xi)\big\rangle
&=&\big\langle\big({\threesecofA}(\oldpsi_2,\oldpsi_3)\wedge \iota_{\oldpsi_1}
-{\threesecofA}(\oldpsi_1,\oldpsi_3)\wedge \iota_{\oldpsi_2}
+{\threesecofA}(\oldpsi_1,\oldpsi_2)\wedge \iota_{\oldpsi_3}\big)x|\xi\big\rangle\\
&&\qquad+\big\langle x|\big(\oldpsi({\threesecofA}_2,{\threesecofA}_3)\wedge \iota_{{\threesecofA}_1}
-\oldpsi({\threesecofA}_1,{\threesecofA}_3)\wedge \iota_{{\threesecofA}_2}
+\oldpsi({\threesecofA}_1,{\threesecofA}_2)\wedge \iota_{{\threesecofA}_3}\big)\xi\big\rangle
\\
&=&\langle x\wedge{\threesecofA}|\oldpsi\wedge\xi\rangle
+\langle x|\xi\rangle\langle{\threesecofA}|\oldpsi\rangle
+\langle \xi\wedge\oldpsi|{\threesecofA}\wedge x\rangle
+\langle\xi|x\rangle\langle\oldpsi|{\threesecofA}\rangle
\\
&=&2\langle \iota_x\oldpsi | \iota_\xi{\threesecofA}\rangle.
\end{eqnarray*}
\end{proof}

The following lemma is a generalization of \cite{CS}*{Lemma 5.1}.
\begin{lem}\label{Lemma 5.1}
Assume that  $(A,\BrA{\tobefilledin}{\tobefilledin}, \BrAd{\tobefilledin}{\tobefilledin}, a_A,a_\ast, {\threesecofA},\oldpsi)$ is a proto-bialgebroid. Then
\begin{eqnarray*}
\big\langle y|\big(L_{x\circ\xi}-\CBr{L_x}{L_\xi}\big)(\eta)\big\rangle
=\big\langle \iota_\eta{\dstar}(x)|\iota_y\dd(\xi)\big\rangle
+\oldpsi\big(x,{\threesecofA}(\xi,\eta),y\big),
\end{eqnarray*}
for all $x,y\in\secA$ and $\xi,\eta\in\secAd$.
\end{lem}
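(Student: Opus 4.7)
The plan is to prove the identity by a direct expansion: unwind both $L_{x\circ\xi}(\eta)$ and the graded commutator $\CBr{L_x}{L_\xi}(\eta)$ into elementary operations ($d_A$, $d_\ast$, interior products, and the brackets $\BrA{\tobefilledin}{\tobefilledin}$, $\BrAd{\tobefilledin}{\tobefilledin}$), pair with $y$, and then invoke the proto-bialgebroid axioms of Definition \ref{Def:Proto-bialgebroids} to collapse the expression to the claimed right-hand side.

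First I would split $x\circ\xi$ according to the definition $x\circ\xi = -\iota_\xi(\dstar x) + L_x\xi$. Since $-\iota_\xi(\dstar x)\in\secA$ acts on $\eta\in\secAd$ by the Cartan-type formula $L_z\eta = \dd(\iota_z\eta) + \iota_z(\dd\eta)$, and $L_x\xi\in\secAd$ acts on $\eta$ by the bracket $\BrAd{L_x\xi}{\eta}$, pairing with $y$ yields a first cluster of scalar terms. In parallel, $\CBr{L_x}{L_\xi}(\eta) = L_x\BrAd{\xi}{\eta} - L_\xi(L_x\eta)$, whose expansion via Cartan formulas and the bracket $\BrAd{\tobefilledin}{\tobefilledin}$ yields a second cluster after pairing with $y$. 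Most of the $a_A$-derivative and $a_\ast$-derivative pieces between the two clusters will cancel outright by the Leibniz rules of the two skew-symmetric dull algebroids.

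Second, I would regroup the surviving terms. Those of ``Jacobiator shape'' for $\BrAd{\tobefilledin}{\tobefilledin}$ applied to $\xi$, $\eta$, and a test covector coming from $y$ can be replaced using axiom (2) of Definition \ref{Def:Proto-bialgebroids}, which controls this Jacobiator by $\tau$ and $\dd$; the $\dd(\tau(\xi,\eta,\chi))$ piece contracts with $x$ and $y$ in a way that cancels against residual Cartan-type terms, leaving precisely the mixed contribution $\oldpsi\bigl(x,{\threesecofA}(\xi,\eta),y\bigr)$ once the compatibility axiom (3) of Definition \ref{Def:Proto-bialgebroids} ($\dstar\BrA{x}{y} = \BrA{\dstar x}{y} + \BrA{x}{\dstar y} + \iota_{\iota_y\iota_x\oldpsi}{\threesecofA}$) is applied to convert a spurious $\dstar\BrA{x}{y}$-contribution. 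The remaining $\dstar x$-$\dd\xi$ terms then naturally combine into the symmetric pairing $\langle\iota_\eta\dstar(x)\,|\,\iota_y\dd(\xi)\rangle$, arising from matching the $\iota_\xi(\dstar x)$ part of $x\circ\xi$ against the $\iota_x(\dd\eta)$-type piece produced by the Cartan formula.

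The principal obstacle is bookkeeping: the direct expansion produces roughly a dozen terms on each side, and achieving the required cancellations hinges on consistent sign conventions and repeated use of the Leibniz rules. As a sanity check, the case $\threesecofA=0$ and $\oldpsi=0$ must recover \cite{CS}*{Lemma 5.1} (both $A$ and $A^\ast$ become Lie algebroids, the Jacobiators vanish, and the right-hand side collapses to $\langle\iota_\eta\dstar(x)\,|\,\iota_y\dd(\xi)\rangle$). This specialization provides a reliable scaffold: one organizes the general computation so that the Lie-bialgebroid terms reproduce \emph{loc.\ cit.}, and the extra $\tau$- and $\oldpsi$-corrections are tracked separately and identified, via axioms (2) and (3), with the single mixed term $\oldpsi(x,\threesecofA(\xi,\eta),y)$.
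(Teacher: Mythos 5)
Your strategy---expand $L_{x\circ\xi}(\eta)$ and $\CBr{L_x}{L_\xi}(\eta)$ into the dull-algebroid data and close the computation with the axioms of Definition~\ref{Def:Proto-bialgebroids}---is genuinely different from the paper's, and a direct computation of this kind can in principle be made to work. But the roadmap you give for the decisive cancellation is partly wrong, and since that cancellation is exactly the part you defer, this is a real gap rather than mere bookkeeping. Concretely: axiom $(2)$ cannot enter. Each of the four terms $\BrAd{L_x\xi}{\eta}$, $L_{\iota_\xi\dstar(x)}\eta$, $L_x\BrAd{\xi}{\eta}$ and $\BrAd{\xi}{L_x\eta}$ uses the $A$-structure $(\BrA{\tobefilledin}{\tobefilledin},a_A,\dd)$ exactly once and the $A^\ast$-structure $(\BrAd{\tobefilledin}{\tobefilledin},a_\ast,\dstar)$ exactly once, so nothing quadratic in the $A^\ast$-structure---in particular no Jacobiator of $\BrAd{\tobefilledin}{\tobefilledin}$---can appear; moreover there is no third covector to feed into such a Jacobiator, since $y\in\secA$ enters only through the final pairing and is never differentiated. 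The only axiom capable of producing the mixed term $\oldpsi\big(x,\threesecofA(\xi,\eta),y\big)$ is the compatibility axiom $(3)$, and even there you need it either in the dual form $\dd\BrAd{\xi}{\eta}=\BrAd{\dd\xi}{\eta}+\BrAd{\xi}{\dd\eta}+\iota_{\iota_\eta\iota_\xi\threesecofA}\oldpsi$ (whose equivalence with the stated form must itself be justified) or in the stated form after first integrating $L_x$ by parts against $y$ via $\langle L_x\alpha|y\rangle=a_A(x)\langle\alpha|y\rangle-\langle\alpha|\BrA{x}{y}\rangle$ so that $\BrA{x}{y}$ actually appears. As written, the mechanism you describe for reaching the right-hand side would not execute.

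For comparison, the paper sidesteps the expansion entirely by working in the Courant algebroid $A\oplus A^\ast$. Since $\iota_\eta\dstar(z)+\threesecofA(\chi,\eta)\in\secA$ is isotropic against $y\in\secA$, one has $\langle y|L_{z+\chi}\eta\rangle=2\langle y,(z+\chi)\circ\eta\rangle$; each of $\langle y|L_{x\circ\xi}\eta\rangle$, $\langle y|L_xL_\xi\eta\rangle$, $\langle y|L_\xi L_x\eta\rangle$ is then rewritten in terms of iterated Dorfman brackets, with the two correction terms $\oldpsi\big(x,\threesecofA(\xi,\eta),y\big)$ and $\big\langle\iota_\eta\dstar(x)|\iota_y\dd(\xi)\big\rangle$ tracked explicitly, and everything else collapses by the Leibniz identity $(x\circ\xi)\circ\eta=x\circ(\xi\circ\eta)-\xi\circ(x\circ\eta)$ together with the invariance of the pairing. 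If you wish to keep your direct route, reorganize it so that the only structural input beyond the Leibniz rules is axiom $(3)$, and carry out the term-by-term cancellation explicitly; your sanity check against the case $\threesecofA=\oldpsi=0$ is sound and worth keeping.
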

\begin{proof}
From the definition of the Dorfman bracket, it follows that
\begin{eqnarray*}
\langle y|L_{z+\chi}\eta\rangle
=2\big\langle y, (z+\chi)\circ\eta
+\iota_\eta{\dstar}(z)+{\threesecofA}(\chi,\eta)
\big\rangle
=2\big\langle y, (z+\chi)\circ\eta\big\rangle
\end{eqnarray*}
for all $z+\chi\in\Gamma(A\oplus A^\ast)$. Hence, we obtain
\begin{eqnarray*}
\langle y|L_xL_\xi\eta\rangle &=&2\big\langle y,x\circ(\xi\circ\eta)\big\rangle
-\oldpsi\big(x,{\threesecofA}(\xi,\eta),y\big)
\end{eqnarray*}
and
\begin{eqnarray*}
\langle y|L_\xi L_x\eta\rangle
&=&2\big\langle y,\xi\circ(L_x\eta)\big\rangle\\
&=&2\big(L_\xi\langle y,L_x\eta\rangle
-\langle \xi\circ y,L_x\eta\rangle\big)\\
&=&2\big(L_\xi\langle y,x\circ\eta\rangle
-\big\langle\xi\circ y,x\circ\eta+\iota_\eta{\dstar}(x)\big\rangle\big)\\
&=&2\big\langle y,\xi\circ(x\circ\eta)\big\rangle
+\big\langle \iota_\eta{\dstar}(x)| \iota_y\dd(\xi)\big\rangle,
\end{eqnarray*}
where we have used the equation
\begin{eqnarray*}
\big\langle y,L_\xi(x\circ\eta)\big\rangle =\big\langle y,L_\xi(L_x\eta-\iota_\eta{\dstar}(x))\big\rangle
 =\big\langle y,\xi\circ\big(L_x\eta-\iota_\eta{\dstar}(x)\big) +{\threesecofA}(\xi,L_x\eta)
-\iota_{\iota_\eta{\dstar}(x)}\dd(\xi)\big\rangle.
\end{eqnarray*}
Thus, we have
\begin{align*}
\big\langle y|(L_{x\circ\xi}-\CBr{L_x}{L_\xi})(\eta)\big\rangle
&= 2\big\langle y,(x\circ\xi)\circ\eta\big\rangle -2\big\langle y,x\circ(\xi\circ\eta)\big\rangle
+\oldpsi\big(x,{\threesecofA}(\xi,\eta),y\big)\\
&\qquad+2\big\langle y,\xi\circ(x\circ\eta)\big\rangle
+\big\langle \iota_\eta{\dstar}(x)| \iota_y\dd(\xi)\big\rangle\\
&=\big\langle \iota_\eta{\dstar}(x)| \iota_y\dd(\xi)\big\rangle +\oldpsi\big(x,{\threesecofA}(\xi,\eta),y\big).
\end{align*}
\end{proof}

Consider the pair of operators
\begin{align*}
D &={\dstar}+\partial \colon \wedgeA \to \wedgeA, & D_\ast &=\dd + \partial_\ast \colon \OmegaA \to \OmegaA.
\end{align*}
In the case of Lie bialgebroids, their squares yield a pair of Laplacian operators (see \cite{CS}):
\begin{align*}
\Delta &:= {\dstar}\partial+\partial{\dstar} \colon \wedgeAdegree{k} \rightarrow\wedgeAdegree{k},\\
\Delta_\ast &:= \dd\partial_\ast +\partial_\ast\dd \colon \OmegaAdegree{k} \rightarrow\OmegaAdegree{k}.
\end{align*}
In our situation, we also call them \textbf{Laplacians} although we do not require $(A,A^\ast)$ to be a Lie bialgebroid. They satisfy the following key relations.
\begin{lem}[\cite{CS}*{Proposition 4.5}]
For all $r_1,r_2\in\wedgeA$, we have
\begin{eqnarray}\label{prop 4.5}
&&\Delta(r_1\wedge r_2)-(\Delta r_1)\wedge r_2 -r_1\wedge(\Delta r_2) \nonumber\\
&=&(-1)^{\puredegree{r_1}}\big({\dstar}\big(\BrA{r_1}{r_2}\big) -\BrA{{\dstar}(r_1)}{r_2}
+(-1)^{\puredegree{r_1}}\BrA{r_1}{{\dstar}(r_2)} \big).
\end{eqnarray}
For all $x\in\secA,\xi\in\secAd, f,g\in \CinfM$, we have
\begin{eqnarray}
\label{41}\Delta_\ast(fg)-f\Delta_\ast(g)-g\Delta_\ast(f)&=&-\BrAd{\dd(f)}{g}+\BrAd{f}{\dd(g)},\\
\label{411}\Delta_\ast(f\xi)-f\Delta_\ast(\xi)-\Delta_\ast(f)\,\xi &=& \dd(\BrAd{f}{\xi})- \BrAd{\dd(f)}{\xi} + \BrAd{f}{\dd(\xi)}.
\end{eqnarray}
\end{lem}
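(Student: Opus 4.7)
My plan is to expand $\Delta(r_1\wedge r_2) = \dstar\partial(r_1\wedge r_2) + \partial\dstar(r_1\wedge r_2)$ in two dual ways using the BV relation \eqref{BV} together with the Leibniz rule for $\dstar$. Taking $r_1$ to be homogeneous of degree $k$, one route first applies \eqref{BV} to express $\partial(r_1\wedge r_2)$ in terms of $\BrA{r_1}{r_2}$, $(\partial r_1)\wedge r_2$ and $r_1\wedge\partial r_2$, and then differentiates by $\dstar$ via Leibniz. The dual route first expands $\dstar(r_1\wedge r_2) = (\dstar r_1)\wedge r_2 + (-1)^k r_1\wedge \dstar r_2$ using the derivation property of $\dstar$, and then applies \eqref{BV} to each summand (noting that the first factor now has degree $k+1$ or $k$ respectively). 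Summing these two routes, the mixed terms $(\partial r_1)\wedge(\dstar r_2)$ and $(\dstar r_1)\wedge(\partial r_2)$ cancel by virtue of opposite signs; what remains is $(\Delta r_1)\wedge r_2 + r_1\wedge(\Delta r_2)$ plus the three bracket contributions $\dstar\BrA{r_1}{r_2}$, $\BrA{\dstar r_1}{r_2}$, $\BrA{r_1}{\dstar r_2}$ with exactly the signs displayed on the right hand side of \eqref{prop 4.5}.

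\textbf{Identities for $\Delta_\ast$.} For \eqref{41} and \eqref{411} the crucial observation is that $\partial_\ast$ decreases $A^\ast$-form degree, so $\partial_\ast f = 0$ for every $f\in\CinfM$. Hence $\Delta_\ast(fg) = \partial_\ast \dd(fg) = \partial_\ast(g\,\dd f) + \partial_\ast(f\,\dd g)$ after applying the Leibniz rule for $\dd$. The BV relation \eqref{BV2}, applied with arguments of degree $(1,0)$ and $(0,1)$, then converts each summand into a bracket $\BrAd{\dd f}{g}$ or $\BrAd{f}{\dd g}$ plus a piece $g\,\Delta_\ast f$ or $f\,\Delta_\ast g$, which upon rearrangement is exactly \eqref{41}. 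For \eqref{411}, I would expand $\Delta_\ast(f\xi)=\dd\partial_\ast(f\xi)+\partial_\ast\dd(f\xi)$: treat $\partial_\ast(f\xi)$ by \eqref{BV2} in degrees $(0,1)$; expand $\dd(f\xi)=(\dd f)\wedge \xi + f\,\dd\xi$ via Leibniz; and then apply \eqref{BV2} in degrees $(1,1)$ and $(0,2)$ respectively. The two occurrences of $(\dd f)\wedge \partial_\ast\xi$ arising in opposite places cancel, and regrouping yields \eqref{411}.

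\textbf{The main obstacle.} The conceptual content is modest: each identity simply measures the failure of a Laplacian to be a derivation, which by \eqref{BV} and \eqref{BV2} is controlled precisely by the skew-symmetric dull brackets. The actual difficulty is sign bookkeeping. Both $\partial$ and $\dstar$ are odd operators, and the $(-1)^k$ factor in the BV relations depends on the degree of the first argument, which shifts by $\pm 1$ after one applies $\dstar$ or $\partial$. One must carefully monitor how these parities compose through each nested application so that the mixed terms truly cancel and the bracket terms assemble with the correct signs. Once the sign conventions are uniformly adhered to, no additional ingredient is needed.
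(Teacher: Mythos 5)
Your proposal is correct and follows the same route as the proof in \cite{CS}*{Proposition 4.5}, which the paper cites without reproving: expand $\Delta = \dstar\partial + \partial\dstar$ (resp. $\Delta_\ast = \dd\partial_\ast + \partial_\ast\dd$) on a product, convert each $\partial$ (resp. $\partial_\ast$) of a wedge via the BV relation \eqref{BV} (resp. \eqref{BV2}), use the derivation property of $\dstar$ (resp. $\dd$), and observe that the mixed terms such as $(\partial r_1)\wedge(\dstar r_2)$ cancel in pairs. I checked the sign bookkeeping you flag as the main hazard — including the degree shift from $k$ to $k+1$ in the BV relation applied to $(\dstar r_1)\wedge r_2$, and the vanishing $\partial_\ast f=0$ for $f\in\CinfM$ — and the terms assemble exactly to \eqref{prop 4.5}, \eqref{41} and \eqref{411}.
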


\begin{prop}\label{Main-pre-thm-2} 	
Let $(A,\BrA{\tobefilledin}{\tobefilledin},a_A)$ and $(A^*,\BrAd{\tobefilledin}{\tobefilledin},a_*)$ be two skew-symmetric dull algebroids in duality. Then for ${\threesecofA}\in\wedgeAdegree{3}$ and $\oldpsi\in\OmegaAdegree{3}$, the following four assertions are equivalent:
\begin{itemize}
		\item[$\mathrm{(I)}$] For all $ x\in\secA$ and $\xi\in\secAd$,
		$\Delta_*\big(\langle\xi | x\rangle\big)
		=\big\langle\Delta_*(\xi) | x\big\rangle
		+\big\langle\xi | \Delta(x)\big\rangle
		-2\big\langle \iota_x\oldpsi | \iota_\xi{\threesecofA}\big\rangle$;
		
		\item[$\mathrm{(II)}$] For all $x\in\secA$ and $\xi\in\secAd$,
		$\Delta\big(\langle\xi | x\rangle\big)
		=\big\langle\Delta_*(\xi) | x\big\rangle
		+\big\langle\xi | \Delta(x)\big\rangle
		-2\big\langle \iota_x\oldpsi | \iota_\xi{\threesecofA}\big\rangle$;
		
	\item[$\mathrm{(III)}$]
	For all $x\in\secA$ and $\xi\in\secAd$, the maps $L_{x\circ\xi}-\CBr{L_x}{L_\xi} \colon \secAd\rightarrow\secAd$ and $L_{\xi\circ x}-\CBr{L_\xi}{L_x}:\secA\rightarrow\secA$ are both $\CinfM$-linear, and their traces coincide,
\[
\trace\big(L_{\xi\circ x}-\CBr{L_\xi}{L_x}\big)=\trace\big(L_{x\circ\xi}-\CBr{L_x}{L_\xi}\big)=2\big\langle\dd(\xi) | {\dstar}(x)\big\rangle
		-2\big\langle \iota_x\oldpsi | \iota_\xi{\threesecofA}\big\rangle .
\]
Here $x \circ \xi := -\iota_\xi (d_\ast x) + L_x \xi $ and $\xi \circ x := L_\xi x -\iota_x(\dd \xi)$.
\item[$\mathrm{(IV)}$]
		For all $f\in \CinfM$, $x\in\secA$, and $\xi\in\secAd$,
		\begin{eqnarray*}
			\Delta(f)&=&\Delta_\ast(f) =\frac{1}{2}(L_{X_0}+L_{\xi_0})(f),\\
			\Delta(x)&=&(\frac{1}{2}L_{X_0}+\frac{1}{2}L_{\xi_0} +Q_1)(x),\\
			\Delta_\ast(\xi) &=&(\frac{1}{2}L_{X_0}+\frac{1}{2}L_{\xi_0} +Q_3)(\xi).
			\end{eqnarray*}
	\end{itemize}
\end{prop}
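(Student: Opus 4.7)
The plan is to establish the cyclic implications $(\mathrm{IV}) \Rightarrow (\mathrm{I}),(\mathrm{II}) \Rightarrow (\mathrm{III}) \Rightarrow (\mathrm{IV})$, with $(\mathrm{I}) \Leftrightarrow (\mathrm{II})$ emerging as a by-product. For $(\mathrm{IV}) \Rightarrow (\mathrm{I})$, the Leibniz rule for $L_{X_0}$ and $L_{\xi_0}$ on $\Gamma(\wedge^\bullet A\otimes\wedge^\bullet A^\ast)$ is compatible with the duality pairing, so the explicit formula for $\Delta_\ast$ on $\CinfM$ in $(\mathrm{IV})$ gives
\[
\Delta_\ast\langle\xi|x\rangle = \tfrac{1}{2}(L_{X_0}+L_{\xi_0})\langle\xi|x\rangle = \tfrac{1}{2}\langle(L_{X_0}+L_{\xi_0})\xi|x\rangle + \tfrac{1}{2}\langle\xi|(L_{X_0}+L_{\xi_0})x\rangle.
\]
Substituting the formulas for $\Delta_\ast(\xi)$ and $\Delta(x)$ from $(\mathrm{IV})$ makes the $L_{X_0}+L_{\xi_0}$ pieces cancel pairwise, leaving $\langle Q_3\xi|x\rangle + \langle\xi|Q_1 x\rangle$, which by Lemma~\ref{Q1Q3} equals $2\langle\iota_x\oldpsi|\iota_\xi{\threesecofA}\rangle$ — exactly the correction term in $(\mathrm{I})$. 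The same calculation with $\Delta$ on the left-hand side yields $(\mathrm{II})$.

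The equivalence $(\mathrm{I}) \Leftrightarrow (\mathrm{II})$ reduces to $\Delta f = \Delta_\ast f$ for every $f$ of the form $\langle\xi|x\rangle$, because the right-hand sides of the two conditions coincide. As such functions locally span $\CinfM$ by nondegeneracy of the pairing, this reduces further to $\Delta|_{\CinfM}=\Delta_\ast|_{\CinfM}$, which either hypothesis forces via \eqref{41} and its dual applied to $f\langle\xi|x\rangle$.

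For $(\mathrm{I}) \Rightarrow (\mathrm{III})$: apply \eqref{prop 4.5} with $r_1=f\in\CinfM$ and $r_2=x\in\secA$, pair with $\xi$, and invoke Lemma~\ref{42} to obtain
\[
\langle\Delta(fx)|\xi\rangle - (\Delta f)\langle x|\xi\rangle - f\langle\Delta x|\xi\rangle = (\CBr{L_x}{L_\xi} - L_{x\circ\xi})(f).
\]
Comparing $(\mathrm{I})$ applied to $fx$ with $f$-times $(\mathrm{I})$ applied to $x$ (and accounting for the $\Delta_\ast$ Leibniz failure via \eqref{41}) forces $L_{x\circ\xi}-\CBr{L_x}{L_\xi}$ to be $\CinfM$-linear on $\secAd$; the $\CinfM$-linearity of $L_{\xi\circ x}-\CBr{L_\xi}{L_x}$ on $\secA$ is symmetric. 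The trace identity in $(\mathrm{III})$ then follows by summing Lemma~\ref{Lemma 5.1} over a local dual basis $\{\xi^a,\theta_a\}$ and applying Lemma~\ref{KL-K} to handle the $\oldpsi$- and $\threesecofA$-dependent correction.

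Finally, $(\mathrm{III}) \Rightarrow (\mathrm{IV})$ is the substantive step. On $\CinfM$, the trace identity supplied by $(\mathrm{III})$ is dual (via \eqref{X_0}--\eqref{xi_0}) to the defining Lie-derivative equations for the modular elements, forcing $\Delta f = \Delta_\ast f = \tfrac{1}{2}(L_{X_0}+L_{\xi_0})f$. To lift this to sections, apply \eqref{prop 4.5} with the known action on functions, express the resulting second-order defect of $\Delta$ applied to $fx$ using Lemma~\ref{42}, and isolate the $\CinfM$-linear remainder; by invoking Lemma~\ref{KL-K} the remainder is identified as exactly $Q_1(x)$, and symmetrically the operator $Q_3(\xi)$ emerges for $\Delta_\ast$. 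The principal obstacle is this final identification: one must track how the $\threesecofA$- and $\oldpsi$-dependent contributions propagate through the second-order Leibniz failures of $\Delta$ and $\Delta_\ast$, and reconcile them — via Lemma~\ref{Q1Q3} together with the trace identities for $K(x,y),L(\xi,\eta)$ — with the pointwise operators $Q_1,Q_3$, leaving no residual anomaly.
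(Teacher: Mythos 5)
Your overall architecture is sound and close to the paper's (the paper proves $\mathrm{(I)}\Leftrightarrow\mathrm{(II)}$, $\mathrm{(I)}\Rightarrow\mathrm{(III)}\Rightarrow\mathrm{(IV)}\Rightarrow\mathrm{(I)}$; your cycle is a harmless rearrangement), and your steps $\mathrm{(IV)}\Rightarrow\mathrm{(I)},\mathrm{(II)}$ via Lemma~\ref{Q1Q3} and $\mathrm{(I)}\Leftrightarrow\mathrm{(II)}$ via \eqref{41}, \eqref{411} match the paper's argument. However, there is a genuine gap in your derivation of the trace identity in $\mathrm{(I)}\Rightarrow\mathrm{(III)}$: you propose to obtain it by ``summing Lemma~\ref{Lemma 5.1} over a local dual basis,'' but Lemma~\ref{Lemma 5.1} is stated and proved \emph{under the hypothesis that $(A,A^\ast,\threesecofA,\oldpsi)$ is already a proto-bialgebroid} --- its proof invokes the metric invariance of the Dorfman bracket and, crucially, the Leibniz identity $(x\circ\xi)\circ\eta - x\circ(\xi\circ\eta) + \xi\circ(x\circ\eta)=0$, which encodes exactly the compatibility axioms you do not yet have. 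Condition $\mathrm{(I)}$ is only a trace-level constraint and does not imply these axioms, so Lemma~\ref{Lemma 5.1} is unavailable at this point and the step would be circular. The paper instead computes $\trace\big(L_{x\circ\xi}-\CBr{L_x}{L_\xi}\big)$ from the general identity of \cite{CS}*{Proposition 4.7}, which expresses $\big(L_{x\circ\xi}-\CBr{L_x}{L_\xi}\big)\Omega\otimes V$ in terms of $\Delta$, $\Delta_\ast$, $\langle \dstar(x)|\dd(\xi)\rangle$ and holds for arbitrary dual skew-symmetric dull algebroids; relation $\mathrm{(I)}$ then supplies the correction term $-2\langle\iota_x\oldpsi|\iota_\xi\threesecofA\rangle$.

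A secondary weakness is the step $\mathrm{(III)}\Rightarrow\mathrm{(IV)}$, which you describe only impressionistically. The actual mechanism is not Lemma~\ref{KL-K} (that lemma computes traces of the auxiliary operators $K(x,y)$, $L(\xi,\eta)$ and plays no role here) but the identities \eqref{(47)} and \eqref{Pro 4.11} (from \cite{CS}*{Propositions 4.10 and 4.11}): the $\CinfM$-linearity in $\mathrm{(III)}$ together with Lemma~\ref{42} forces $L_{\dd(f)}+L_{\dstar(f)}=0$, killing the right-hand side of \eqref{(47)} and yielding $\Delta(f)=\tfrac12(L_{X_0}+L_{\xi_0})(f)$; then the trace value from $\mathrm{(III)}$ substituted into \eqref{Pro 4.11} gives $\big\langle\Delta(x)-\tfrac12(L_{X_0}+L_{\xi_0})(x)\,\big|\,\xi\big\rangle=\langle\iota_x\oldpsi|\iota_\xi\threesecofA\rangle$, and the right-hand side is identified with $\langle\iota_{\iota_x\oldpsi}\threesecofA|\xi\rangle = \langle Q_1(x)|\xi\rangle$ by direct expansion, not by a trace lemma. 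You should either cite these two identities explicitly or reprove them; as written, the ``second-order defect'' argument does not produce the operator $Q_1$.
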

\begin{proof}
The proof consists of four steps: $\mathrm{(I)} \Leftrightarrow \mathrm{(II)}$, $ \mathrm{(I)} \Rightarrow \mathrm{(III)}$,   $\mathrm{(III)}\Rightarrow \mathrm{(IV)}$, and $ \mathrm{(IV)} \Rightarrow \mathrm{(I)} $.	
\begin{itemize}
\item $\mathrm{(I)}~\Leftrightarrow~\mathrm{(II)}$:
We claim that
\begin{eqnarray}\label{466}
	L_{{\dstar}(f)}+L_{\dd(f)}=0 \quad\mbox{as a map}\quad
	\wedgeAdegree{\bullet}\rightarrow\wedgeAdegree{\bullet}.
\end{eqnarray}
{In fact, for all $f\in \CinfM$, $x\in\secA$, and $\xi\in\secAd$, we have, on the one hand,
\begin{eqnarray*}
\Delta_\ast\big(\langle x|f\xi\rangle\big)
&=&\Delta_\ast\big(f\langle x|\xi\rangle\big) \equalbyreason{\eqref{41}} \Delta_\ast(f)\langle x|\xi\rangle +f\Delta_\ast\big(\langle x|\xi\rangle\big) -\BrAd{\dd(f)}{\langle x|\xi\rangle}
+\BrAd{f}{\dd\big(\langle x|\xi\rangle\big)}\\
&=&\Delta_\ast(f)\langle x|\xi\rangle +f\Delta_\ast\big(\langle x|\xi\rangle\big)
-\big(L_{{\dstar}(f)}+L_{\dd(f)}\big)\langle x|\xi\rangle;
\end{eqnarray*}
On the other hand, we have
\begin{align*}
\Delta_\ast\big(\langle x|f\xi\rangle\big) &\equalbyreason{$\mathrm{(I)}$}
\big\langle \Delta(x)|f\xi\big\rangle +\big\langle x|\Delta_\ast(f\xi)\big\rangle
-2f\big\langle \iota_x\oldpsi | \iota_\xi{\threesecofA}\big\rangle\\
&\equalbyreason{\eqref{411}}
f\big\langle \Delta(x)|\xi\big\rangle
+\big\langle x|f\Delta_\ast(\xi)+\Delta_\ast(f)\,\xi
+\dd(\BrAd{f}{\xi})-\BrAd{\dd(f)}{\xi}+\BrAd{f}{\dd(\xi)}\big\rangle\\
&\qquad\qquad \qquad \qquad -2f\big\langle \iota_x\oldpsi | \iota_\xi{\threesecofA}\big\rangle\\
&= f \big\langle x|\Delta_\ast(\xi)\big\rangle
+f \big\langle \Delta(x)|\xi\big\rangle +\Delta_\ast(f) \langle x|\xi\rangle
-\big\langle x|\big(L_{{\dstar}(f)}+L_{\dd(f)}\big)\xi\big\rangle
-2f\big\langle \iota_x\oldpsi | \iota_\xi{\threesecofA}\big\rangle\\
&\equalbyreason{$\mathrm{(I)}$}
f \Delta_\ast\big(\langle x|\xi\rangle\big)
+\Delta_\ast(f) \langle x|\xi\rangle
-\big\langle x|\big(L_{{\dstar}(f)}+L_{\dd(f)}\big)\xi\big\rangle.
\end{align*}
Here we have used the identity
\[
 \big\langle x| {\dd}\big(\BrAd{f}{\xi}\big)-\BrAd{\dd(f)}{\xi} +\BrAd{f}{\dd(\xi)}\big\rangle = \big(\CBr{L_\xi}{L_x}-L_{\xi\circ x}\big)(f).
\]
Thus, we obtain $(L_{{\dstar}(f)}+L_{\dd(f)})\langle x|\xi\rangle
=\big\langle x|\big(L_{{\dstar}(f)}+L_{\dd(f)}\big)\xi\big\rangle$, which implies Equation~\eqref{466}.}

Note that
\begin{eqnarray*}
(\Delta f)\Omega= (\partial {\dstar} f)\Omega \equalbyreason{\eqref{LLL1}} -L_{{\dstar}(f)}\Omega
\equalbyreason{\eqref{466}} L_{\dd (f)}\Omega \equalbyreason{\eqref{LLL3}}
(\partial_\ast \dd f)\Omega = (\Delta_\ast f)\Omega,
\end{eqnarray*}
which implies that $\Delta f=\Delta_*f$, for all $f\in \CinfM$. Thus, we have the equivalence between $\mathrm{(I)}$ and $\mathrm{(II)}$.
\item  $\mathrm{(I)} \Rightarrow \mathrm{(III)}$:  By Lemma~\ref{42} and Equation~\eqref{466},
			\begin{eqnarray*}
				(L_{x\circ\xi}-\CBr{L_x}{L_\xi})(f)=0, \qquad\forall f\in \CinfM.
			\end{eqnarray*}
Thus the map
\[
L_{x\circ\xi}-\CBr{L_x}{L_\xi} \colon \secAd\rightarrow\secAd
\]
is $\CinfM$-linear.
By~\cite{CS}*{Proposition 4.7}, we have
\[
\big(L_{x\circ\xi}-\CBr{L_x}{L_\xi}\big)\Omega\otimes V
=\big(2\big\langle {\dstar}(x)|\dd(\xi)\big\rangle -\big\langle \Delta(x)|\xi\big\rangle
-\big\langle x|\Delta_\ast(\xi)\big\rangle  +\Delta_\ast\big(\langle x|\xi\rangle\big)
\big)\Omega\otimes V
\]
Using relation $\mathrm{(I)}$, we obtain
\[
\trace\big(L_{x\circ \xi}-\CBr{L_x}{L_\xi}\big) =  2\big\langle{\dstar}(x) | \dd(\xi)\big\rangle-2\big\langle \iota_x\oldpsi | \iota_\xi{\threesecofA}\big\rangle.
\]	
Similarly, it follows from $\mathrm{(II)}$ that
\[
\trace\big(L_{\xi\circ x}-\CBr{L_\xi}{L_x}\big) =  2\big\langle{\dstar}(x) | \dd(\xi)\big\rangle-2\big\langle \iota_x\oldpsi | \iota_\xi{\threesecofA}\big\rangle.
\]		
\item $\mathrm{(III)}\Rightarrow\mathrm{(IV)}$:
We need the following relations proved in \cite{CS}*{Propositions 4.10 and 4.11}.
\begin{eqnarray}
\big(2\Delta(f)-(L_{X_0}+L_{\xi_0})(f)\big)\Omega\otimes s\otimes V
&=&\Omega\otimes \Big(s\otimes \big(L_{\dd(f)}+L_{{\dstar}(f)}\big)V \nonumber\\&&\qquad-\big(L_{\dd(f)} + L_{{\dstar}(f)} \big) s\otimes V\Big), \label{(47)}
\\
\big\langle 2\Delta(x)-(L_{X_0}+L_{\xi_0})(x)|\xi\big\rangle\Omega\otimes s\otimes V
	&=&2\big\langle {\dstar}(x)|\dd(\xi)\big\rangle\Omega\otimes s\otimes V
\nonumber\\&&\qquad+\big(\CBr{L_x}{L_\xi}-L_{x\circ\xi}\big)\Omega\otimes s\otimes V
\nonumber\\&&\qquad\qquad+\Omega\otimes \big(\CBr{L_x}{L_\xi}-L_{x\circ\xi}\big)s\otimes V.
\label{Pro 4.11}
\end{eqnarray}
for all $x\in\secA, \xi\in\secAd$ and $f\in \CinfM$, where $X_0$ and $\xi_0$ are modular elements defined in Equations~\eqref{X_0} and~\eqref{xi_0}.

We start by proving that
	\begin{eqnarray*}
					\Delta(f)=\frac{1}{2}(L_{X_0}+L_{\xi_0})(f), \qquad\forall f\in \CinfM.
	\end{eqnarray*}
{In fact, the assumption that $L_{x\circ\xi}-\CBr{L_x}{L_\xi}$ is $\CinfM$-linear implies that
\begin{eqnarray*}
(L_{x\circ\xi}-\CBr{L_x}{L_\xi})f=0, \quad \forall f\in\CinfM.
\end{eqnarray*}
By Lemma~\ref{42}, we also have
\begin{eqnarray*}
L_{{\dstar}(f)}+L_{\dd(f)}=0 \quad\mbox{as a map}\quad \wedgeAdegree{\bullet}\rightarrow\wedgeAdegree{\bullet},
\end{eqnarray*}
which further implies that
\begin{eqnarray*}
(L_{\dd(f)}+L_{{\dstar}(f)})s &=&(L_{a_*(\dd f)+a_A({\dstar} f)}g)\,\dM q^1\wedge \ldots\wedge \dM q^n\\
&&\qquad+g\sum_{i=1}^n d q^1\wedge\ldots\wedge d(L_{a_\ast(\dd f)+a_A({\dstar} f)}q^i)
\wedge\ldots\wedge d q^n\\ &=&0,
\end{eqnarray*}
where  $s=gd q^1\wedge \ldots\wedge d q^n$ is a local coordinates expression of $s$.
Thus, the right-hand side of Equation \eqref{(47)} vanishes, which implies that
\[
	\Delta(f)=\frac{1}{2}(L_{X_0}+L_{\xi_0})(f).
\]
}

To obtain the second equation in $\mathrm{(IV)}$,
we use the assumption $\mathrm{(III)}$ and Equation \eqref{Pro 4.11} to obtain
\[
\big\langle \Delta(x)-\frac{1}{2}(L_{X_0}+L_{\xi_0})(x)|\xi\rangle =\langle \iota_x\oldpsi | \iota_\xi{\threesecofA}\big\rangle =\iota_{\iota_x\oldpsi}\iota_\xi{\threesecofA}
	=\iota_\xi \iota_{\iota_x\oldpsi}{\threesecofA}
	=\langle \iota_{\iota_x\oldpsi}{\threesecofA} | \xi\rangle,
\]
which implies that $\Delta(x)=\frac{1}{2}(L_{X_0}+L_{\xi_0})(x) +\iota_{\iota_x\oldpsi} {\threesecofA}$.
If we assume that $\oldpsi=\oldpsi_1\wedge \oldpsi_2\wedge \oldpsi_3$, then the term
\[
\iota_{\iota_x\oldpsi}{\threesecofA}	 = \big({\threesecofA}(\oldpsi_2,\oldpsi_3)\iota_{\oldpsi_1}
		- {\threesecofA}(\oldpsi_1,\oldpsi_3)\iota_{\oldpsi_2}
		+ {\threesecofA}(\oldpsi_1,\oldpsi_2)\iota_{\oldpsi_3}\big)x.
\]
Thus, we obtain
	\begin{eqnarray*}
			\Delta(x)&=&\big(\frac{1}{2}L_{X_0}+\frac{1}{2}L_{\xi_0}
					+{\threesecofA}(\oldpsi_2,\oldpsi_3)\wedge \iota_{\oldpsi_1}
					-{\threesecofA}(\oldpsi_1,\oldpsi_3)\wedge \iota_{\oldpsi_2}
					+{\threesecofA}(\oldpsi_1,\oldpsi_2)\wedge \iota_{\oldpsi_3}
					\big)(x)\\
					&=&(\frac{1}{2}L_{X_0}+\frac{1}{2}L_{\xi_0}
					+ Q_1)(x),
	\end{eqnarray*}
as desired. The proof for the third equation is similar and thus omitted.
\item $\mathrm{(IV)}\Rightarrow\mathrm{(I)}$: This implication follows directly from Lemma~\ref{Q1Q3}.
\end{itemize}
\end{proof}

\begin{prop}\label{Main-pre-Thm-3}
Assume that $(A,\BrA{\tobefilledin}{\tobefilledin},\BrAd{\tobefilledin}{\tobefilledin}, a_A, a_\ast, {\threesecofA}, \oldpsi)$ is a proto-bialgebroid. Then we have for all $k \geq 0$,
\begin{itemize}
\item[$\mathrm{(V)}$]
$\Delta=\frac{1}{2}(L_{X_0}+L_{\xi_0}) +Q_1+Q_2 \colon \wedgeAdegree{k} \rightarrow \wedgeAdegree{k}$,
\item[$\mathrm{(VI)}$]
$\Delta_\ast =\frac{1}{2}(L_{X_0}+L_{\xi_0})+Q_3+Q_4 \colon \OmegaAdegree{k} \rightarrow \OmegaAdegree{k}$.
\end{itemize}
\end{prop}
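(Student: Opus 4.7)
The plan is to deduce the identities (V) and (VI) from Proposition~\ref{Main-pre-thm-2}~$\mathrm{(IV)}$ applied to wedge-degrees $\leq 1$, and to extend to arbitrary multivectors via an order-of-differential-operator argument.

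First I would verify condition $\mathrm{(III)}$ of Proposition~\ref{Main-pre-thm-2} under the proto-bialgebroid hypothesis. Lemma~\ref{Lemma 5.1} expresses $\langle y | (L_{x\circ\xi} - \CBr{L_x}{L_\xi})(\eta)\rangle$ in terms of $\dstar(x),\dd(\xi),\threesecofA,\oldpsi$; letting $y = \xi^a$, $\eta = \theta_a$ for a local dual basis and summing yields the trace identity
\[
\trace(L_{x\circ\xi} - \CBr{L_x}{L_\xi}) = 2\langle \dstar(x) \mid \dd(\xi)\rangle - 2\langle \iota_x\oldpsi \mid \iota_\xi\threesecofA\rangle,
\]
and similarly for $L_{\xi\circ x} - \CBr{L_\xi}{L_x}$. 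Hence Proposition~\ref{Main-pre-thm-2}~$\mathrm{(IV)}$ applies, giving (V) on $\CinfM$ and $\secA$ (where $Q_2$ vanishes, since it contains two contractions), and (VI) on $\CinfM$ and $\secAd$.

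For the extension, I consider the operator
\[
T := \Delta - \tfrac{1}{2}(L_{X_0} + L_{\xi_0}) - Q_1 - Q_2 \colon \wedgeA \to \wedgeA.
\]
Each of its four summands is a differential operator on $\wedgeA$ of order at most $2$, so $T$ itself is of order at most $2$; to prove $T$ is a graded derivation (i.e.\ of order $1$), it suffices to verify that its Leibniz defect vanishes on $\secA \times \secA$. The Lie derivatives $L_{X_0}, L_{\xi_0}$ are derivations. Each summand of $Q_1$ has the form $b \wedge \iota_\xi$ with $b \in \secA$ and $\xi \in \secAd$, which is a degree-$0$ derivation by a direct check using the graded commutativity of the exterior algebra. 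On $x, y \in \secA$, Equation~\eqref{prop 4.5} combined with the proto-bialgebroid compatibility condition~(3) of Definition~\ref{Def:Proto-bialgebroids} reduces the Leibniz defect of $\Delta$ to $-\iota_{(\iota_y\iota_x\oldpsi)}\threesecofA$, and a direct computation of the Leibniz defects of the three summands of $Q_2$ produces exactly the same expression. So the defect of $T$ on $\secA \times \secA$ vanishes, making $T$ a derivation of $\wedgeA$; combined with $T = 0$ on the generating set $\CinfM \cup \secA$, one concludes $T = 0$ on all of $\wedgeA$, proving (V). Statement (VI) is obtained by a symmetric argument: swap the roles of $A$ and $A^\ast$, $\dd$ and $\dstar$, $\partial$ and $\partial_\ast$, $\threesecofA$ and $\oldpsi$, $X_0$ and $\xi_0$, and use the dual identities~\eqref{41},~\eqref{411} and the BV relation~\eqref{BV2}.

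The main obstacle is the defect-matching step. One has to verify that the antisymmetric three-term sum defining $Q_2$ (and, dually, $Q_4$) reproduces, with correct signs, the precise cross-contraction pattern $\iota_{\phi_{j_k}}(r_1)\wedge \iota_{\phi_{j_l}}(r_2)$ that emerges in the Leibniz defect of $\Delta$ (resp.\ $\Delta_\ast$) from the second-order BV relation~\eqref{BV} (resp.~\eqref{BV2}) governing $\partial$ (resp.\ $\partial_\ast$). This bookkeeping, though combinatorial rather than conceptual, is the delicate step where the interplay between $\threesecofA$ and $\oldpsi$ in a proto-bialgebroid is concretely used.
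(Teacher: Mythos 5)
Your proposal is correct and follows essentially the same route as the paper: establish condition $\mathrm{(III)}$ via the trace computation from Lemma~\ref{Lemma 5.1}, invoke Proposition~\ref{Main-pre-thm-2}~$\mathrm{(IV)}$ on degrees $0$ and $1$, and then extend using the Leibniz defect of $\Delta$ supplied by Equation~\eqref{prop 4.5} together with axiom $(3)$ of Definition~\ref{Def:Proto-bialgebroids} — your order-$\leq 2$ operator/biderivation packaging of this last step is just a cleaner reformulation of the paper's explicit degree-$2$ verification followed by induction on $k$, and the key defect-matching computation (that $Q_2(x\wedge y)=-\iota_{(\iota_y\iota_x\oldpsi)}\threesecofA$ equals the Leibniz defect of $\Delta$ on $x,y\in\secA$) is identical. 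The only point to add is that condition $\mathrm{(III)}$ also demands the $\CinfM$-linearity of $L_{x\circ\xi}-\CBr{L_x}{L_\xi}$ before its trace is even defined; the paper obtains this from Lemma~\ref{42} combined with the generalized compatibility $\mathrm{(VII)}$, and your first step should record this.
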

\begin{proof}
First of all, we note that the following two assertions hold:
	\begin{itemize}
		\item[$\mathrm{(VII)}$]
		${\dstar}\big(\BrA{r_1}{r_2}\big) =\BrA{{\dstar}(r_1)}{r_2} +(-1)^{\puredegree{r_1}-1} \BrA{r_1}{{\dstar}(r_2)} +\iota_\oldpsi(\threesecofA \wedge r_1 \wedge r_2)$;
		\item[$\mathrm{(VIII)}$]
		$\Delta(r_1\wedge r_2)	=(\Delta r_1)\wedge r_2 +r_1\wedge(\Delta r_2)
		+(-1)^{\puredegree{r_1}}\iota_\oldpsi(\threesecofA \wedge r_1 \wedge r_2)$,
	\end{itemize}
	for all $r_1,r_2\in\wedgeA$, $\omega_1,\omega_2\in\OmegaA$.
In fact,  $\mathrm{(VII)}$ is a generalized form of axiom $(3)$ in Definition \ref{Def:Proto-bialgebroids}.  By  Equation~\eqref{prop 4.5}, statements $\mathrm{(VII)}$ and $\mathrm{(VIII)}$ are equivalent.
	
Note that relations $\mathrm{(V)}$ and $\mathrm{(VI)}$ are symmetric. It suffices to prove $\mathrm{(V)}$.
We firstly show that the relation $\mathrm{(III)}$ holds.
In fact, by Lemma~\ref{42},  one has
\[
\big(L_{x\circ\xi}-\CBr{L_x}{L_\xi}\big)(f\eta) =f\big(L_{x\circ\xi}-\CBr{L_x}{L_\xi}\big)(\eta)
	-\big\langle {\dstar} \BrA{f}{x}  -\BrA{{\dstar} f}{x} +\BrA{f}{{\dstar}(x)}|\xi\big\rangle\eta,
\]
for all $x\in\secA,\xi,\eta\in\secAd,f\in \CinfM$.
It follows from $\mathrm{(VII)}$ that $L_{x\circ\xi}-\CBr{L_x}{L_\xi}$ is a $\CinfM$-linear endomorphism of $\OmegaA$.
			
Let $\{\xi^1,\ldots,\xi^n\}$ be a local basis of $\secA$ and $\{\theta_1,\ldots,\theta_n\}$ be the dual basis of $\secAd$. It follows that
	\begin{eqnarray*}
		\trace\big(L_{x\circ\xi}-\CBr{L_x}{L_\xi}\big) &=&\big\langle \xi^a|
				\big(L_{x\circ\xi}-\CBr{L_x}{L_\xi}\big)(\theta_a)\big\rangle\\
			&=& \big\langle \iota_{\theta_a}{\dstar}(x)|\iota_{\xi^a}\dd(\xi)\big\rangle
				+\oldpsi\big(x,{\threesecofA}(\xi,\theta_a),\xi^a\big)
				\qquad\mbox{by Lemma~\ref{Lemma 5.1}}\\
			&=& 2\big\langle {\dstar}(x)|\dd(\xi)\big\rangle - 2\big\langle \iota_x\oldpsi | \iota_\xi{\threesecofA}\big\rangle.
	\end{eqnarray*}
Here the last equality holds since
\begin{eqnarray*}
\oldpsi\big(x,{\threesecofA}(\xi,\theta_a),\xi^a\big) =-\oldpsi\big(x, \xi^a, {\threesecofA}(\xi,\theta_a)\big) =-\big\langle \iota_{\xi^a}\iota_x\oldpsi | \iota_{\theta_a} \iota_\xi{\threesecofA}\big\rangle = -2\big\langle \iota_x\oldpsi | \iota_\xi{\threesecofA}\big \rangle.
\end{eqnarray*}
In a similar manner, we see that $L_{\xi\circ x}-\CBr{L_\xi}{L_x} $ is $\CinfM$-linear and
\[
\trace\big(L_{\xi\circ x}-\CBr{L_\xi}{L_x}\big) =2\big\langle\dd(\xi) | {\dstar}(x)\big\rangle
-2\big\langle \iota_x\oldpsi | \iota_\xi{\threesecofA}\big\rangle .
\]
Thus, the relation $\mathrm{(III)}$ holds.
By Proposition \ref{Main-pre-thm-2}, we see that relations in $\mathrm{(IV)}$ hold, which mean that
\[
\Delta=\frac{1}{2}(L_{X_0}+L_{\xi_0}) +Q_1+Q_2 \colon \wedgeAdegree{0} \rightarrow \wedgeAdegree{0},
\]
and
\[
\Delta=\frac{1}{2}(L_{X_0}+L_{\xi_0}) +Q_1+Q_2 \colon \wedgeAdegree{1} \rightarrow \wedgeAdegree{1}.
\]
We now check that this relation holds as a map $\wedgeAdegree{2}\rightarrow\wedgeAdegree{2}$, i.e.,  for all $x,y\in \secA$, we have
\begin{equation}\label{Eqt:temp3-23}
\Delta(x\wedge y)  = (\frac{1}{2}L_{X_0} +\frac{1}{2}L_{\xi_0} +Q_1+Q_2)(x\wedge y).				
\end{equation}
In fact, we have
\begin{eqnarray*}
	\Delta(x\wedge y) &\equalbyreason{ \mbox{$\mathrm{(VIII)}$} }&(\Delta x)\wedge y
			+x\wedge(\Delta y) -{\threesecofA}\big(\oldpsi(x,y)\big)\\
			&\equalbyreason{ \mbox{$\mathrm{(IV)}$} }& \big(\frac{1}{2}(L_{X_0}+L_{\xi_0})(x)
			+\iota_{\iota_x\oldpsi}{\threesecofA}\big)\wedge y
+x\wedge\big(\frac{1}{2}(L_{X_0}+L_{\xi_0})(y)
			+\iota_{\iota_y\oldpsi}{\threesecofA}\big)
			-\iota_{\iota_y\iota_x\oldpsi}{\threesecofA}\\
			&=&\frac{1}{2}(L_{X_0}+L_{\xi_0})(x\wedge y)
			+(\iota_{\iota_x\oldpsi}{\threesecofA})\wedge y
			+x\wedge(\iota_{\iota_y\oldpsi}{\threesecofA})
			-\iota_{\iota_y\iota_x\oldpsi}{\threesecofA} \\
			&=&\big(\frac{1}{2}(L_{X_0}+L_{\xi_0})
			+Q_1+Q_2\big)(x\wedge y).
\end{eqnarray*}
This proves the claim~\eqref{Eqt:temp3-23}.
The cases for $k\geqslant 3$	follow from a standard induction argument and are omitted.
 \end{proof}

\subsection{Proofs of theorems}
\subsubsection{Proof of Theorem~\ref{MAIN THM}}
First of all, we need to compute the square of $\bigD$.
For all $x\in\secA, \xi\in\secAd, v\in\wedgeAdegree{\bullet}$ and $\oldpsi\in\OmegaAdegree{3}$, one has
\[
({\threesecofA}\wedge\iota_{\oldpsi} +\iota_{\oldpsi}\circ{\threesecofA})(v) = \langle{\threesecofA}|\oldpsi\rangle\, v -Q_1(v)-Q_2(v),
\]
and
\[
	\partial\circ\iota_{\oldpsi}
	+\iota_{\oldpsi}\circ\partial
	=\iota_{\dd(\oldpsi)}.
\]
Meanwhile, a simple computation that adapted from \cite{CS} yields that
\begin{align*}
\breve{{\dstar}}(r\otimes l) &=({\dstar}(r)+\frac{1}{2}X_0\wedge r)\otimes l, &
\breve{\partial}(r\otimes l) &=(-\partial(r)+\frac{1}{2}\iota_{\xi_0}r)\otimes l,
\end{align*}
for all $r\in\wedgeA$ and $l \in\GammaL$.
Thus, the operator $\bigD$ is related to the modular elements $X_0$ and $\xi_0$ by
\begin{eqnarray*}
\bigD=\breve{{\dstar}}+\breve{\partial}+{\threesecofA}-\iota_{\oldpsi}
={\dstar}-\partial+\frac{1}{2}(X_0\wedge+\iota_{\xi_0})+{\threesecofA}-\iota_{\oldpsi}.
\end{eqnarray*}
Combining these equations with the BV identity~\eqref{BV} for $\partial$,  one has  for all $v\in\wedgeAdegree{\bullet}$,
\begin{align*}
	\bigD^2(v)
	=&\big({\dstar^2}+L_{\threesecofA}
	+\frac{1}{2}{\dstar}(X_0)
	+\frac{1}{2}\iota_{\xi_0}{\threesecofA}
	-\partial({\threesecofA})\big)(v)
	& (\in \wedgeAdegree {\bullet+2})
	\\
	&\quad+\big(-{\dstar}\circ\iota_{\oldpsi}
	-\iota_{\oldpsi}\circ{\dstar}
	+\partial^2
	-\frac{1}{2}\iota_{\iota_{X_0}\oldpsi}
	-\frac{1}{2}\iota_{\dd(\xi_0)}\big)(v)
	& (\in \wedgeAdegree {\bullet-2})
	\\
	&\quad\quad+{\dstar}({\threesecofA})\wedge v
	& (\in \wedgeAdegree {\bullet+4})
	\\
	&\quad\quad\quad+\iota_{\dd(\oldpsi)}v
	& (\in \wedgeAdegree {\bullet-4})
	\\
	&\quad\quad\quad\quad+\big(\frac{1}{2}L_{X_0}
	+\frac{1}{2}L_{\xi_0}
	-\Delta
	+Q_1+Q_2\big)(v)
	& (\in \wedgeAdegree {\bullet})	
\\
	&\quad\quad\quad\quad\quad+\big(\frac{1}{4}\langle\xi_0|X_0\rangle
	-\frac{1}{2}\partial(X_0)-\langle{\threesecofA}|\oldpsi\rangle\big) v
	\qquad\qquad\qquad\qquad\qquad & (\in \CinfM\, v).
\end{align*}
\begin{equation}\label{Eqt:Dsquarecomputed}
\end{equation}
Note that the implication $(3)\Rightarrow (2)$ follows from the definition of Dirac generating operators.
The rest proof of Theorem \ref{MAIN THM} is divided into the following three steps.

\smallskip
\textbf{Step 1}: $(1)\Rightarrow (2)$ Namely, {$\bigD^2$ is smooth function on $M$, i.e., $\bigD^2\in \CinfM$ under the hypothesis that $(A,\BrA{\tobefilledin}{\tobefilledin}, \BrAd{\tobefilledin}{\tobefilledin}, a_A, a_\ast, {\threesecofA}, \oldpsi)$ is a proto-bialgebroid.
	
Recall that the Jacobiator of the bracket $[\cdot,\cdot]_\ast$ on $\Gamma(A^\ast)$ is controlled by the element $\tau$ and the derivation $d_A$, i.e.,
\[
{\dstar^2}+L_{\threesecofA}=0,
\]
where $L_\tau=\BrA{\threesecofA}{\cdot}$.
Combining with Equation~\eqref{Main-prop-1-1}, the first line on the right hand side of Equation \eqref{Eqt:Dsquarecomputed} vanishes;
by Equation \eqref{Eqt:Main-prop-2}, the second line vanishes;
by ${\dstar}({\threesecofA})=0$ and $\dd(\oldpsi)=0$, the third and fourth lines are zero; by $\mathrm{(V)}$ of Proposition \ref{Main-pre-Thm-3}, the fifth line is also zero.
In summary, the above terms combine to yield
\begin{eqnarray*}
\bigD^2(v) =\breve{f} v,
\end{eqnarray*}
where
\begin{equation*}
\breve{f} =\frac{1}{4}\langle\xi_0|X_0\rangle -\frac{1}{2}\partial(X_0) -\langle{\threesecofA}|\oldpsi\rangle,
\end{equation*}
which is exactly the characteristic function.
}

\smallskip \textbf{Step 2}: $(2)\Rightarrow (1)$
Suppose that there exists a smooth function $g \in \CinfM$ such that $\bigD^2(v)=g v$ for all $v\in\wedgeAdegree{\bullet}$.
To see that the septuple  $(A$, $\BrA{\tobefilledin}{\tobefilledin}$, $\BrAd{\tobefilledin}{\tobefilledin}$, $a_A$, $a_\ast$, ${\threesecofA}$, $\oldpsi)$ is a proto-bialgebroid, we need to check axioms in Definition \ref{Def:Proto-bialgebroids}.

Consider the constant function $v=1$ in Equation~\eqref{Eqt:Dsquarecomputed}.
We have
\begin{align}
\dd(\oldpsi) &=0, & {\dstar}({\threesecofA})&= 0, \nonumber\\
 \frac{1}{2}{\dstar}(X_0)+\frac{1}{2}\iota_{\xi_0}{\threesecofA}-\partial({\threesecofA}) &= 0, &
g&= \breve{f}, \notag \\
\label{AAA1}
{\dstar^2}+L_{\threesecofA} &=0, &  \frac{1}{2}L_{X_0} +\frac{1}{2}L_{\xi_0} -\Delta +Q_1 +Q_2 &= 0,
\end{align}
\begin{equation}\label{AAA2}
-{\dstar}\circ\iota_{\oldpsi} -\iota_{\oldpsi}\circ{\dstar} +\partial^2 -\frac{1}{2}\iota_{\iota_{X_0}\oldpsi} -\frac{1}{2}\iota_{\dd(\xi_0)} =0.
 \end{equation}

Axioms $(4)$ and $(5)$ are exactly the two equations in the first row.
The Equations in \eqref{AAA1} imply that axioms $(2)$ and $(3)$ are fulfilled, respectively.
By Lemma \ref{Lem:temp327} and Equation \eqref{AAA2}, we have
\[
\dd^2 +L_{\oldpsi} -\partial_\ast(\oldpsi) +\frac{1}{2}\iota_{X_0}\oldpsi +\frac{1}{2}\dd(\xi_0)
=0.
\]
Applying to the constant function $1$, we get $-\partial_\ast(\oldpsi) + \frac{1}{2}\iota_{X_0}\oldpsi +\frac{1}{2}\dd(\xi_0)=0.$
It follows that $\dd^2+L_{\oldpsi}=0$, which implies that axiom $(1)$ is also satisfied.
\smallskip

\textbf{Step 3}: $(1)\Rightarrow (3)$
We need to verify the operator $\bigD$ in~\eqref{expression of D} satisfies conditions $(a)\sim (c)$ in Definition \ref{Def:DGO}.
As condition $(c)$ follows from Step 1,
we only need to check conditions  $(a)$ and $(b)$, i.e.,
\begin{align}\label{Eq: a}
	\CBr{\bigD}{f} &\in \Gamma(A\oplus A^\ast), \\\label{Eq: b}
\CBr{\CBr{\bigD}{x+\xi}}{y+\eta} &\in\Gamma(A\oplus A^\ast),
\end{align}
for all $f\in\CinfM$, $x,y\in\secA$, and $\xi,\eta\in\secAd$,.
Firstly, a direct calculation shows that
\begin{eqnarray}\label{Tech-1}
~\CBr{{\threesecofA}-\iota_{\oldpsi}}{f}(v) &=&0,\\
\label{Tech-2}
\mbox{ and }~\CBr{\CBr{{\threesecofA}-\iota_{\oldpsi}}{x+\xi}}{y+\eta}(v) &=&-(\iota_\eta\iota_\xi{\threesecofA})\wedge v -\iota_{(\iota_y\iota_x\oldpsi)}v,
\end{eqnarray}
for all $x,y\in\secA$, $\xi,\eta\in\secAd$, $f\in \CinfM$, and $v\in\wedgeAdegree{\bullet}$.

For all $v\in\wedgeAdegree{\bullet}$, we have
\begin{eqnarray*}
	\CBr{\bigD}{f}(v) &\equalbyreason{\eqref{expression of D}} &
	\CBr{{\dstar}-\partial+\frac{1}{2}(X_0+\iota_{\xi_0})+{\threesecofA}-\iota_{\oldpsi}}{f}(v)\\
	&\equalbyreason{\eqref{Tech-1}}&
	\CBr{{\dstar}-\partial}{f}(v) =\big({\dstar}(f)+\dd(f)\big)\cdot v
	 \in \Gamma(A\oplus A^\ast)\cdot v.
\end{eqnarray*}
This proves~\eqref{Eq: a}.

For Equation~\eqref{Eq: b}, we compute, for all $x+\xi,y+\eta\in\Gamma(A\oplus A^*)$, $v\in\wedgeAdegree{\bullet}$,
\begin{eqnarray*}
\CBr{\CBr{\bigD}{x+\xi}}{y+\eta}(v) &\equalbyreason{\eqref{expression of D}} &
\CBr{\CBr{{\dstar}-\partial+\frac{1}{2}(X_0+\iota_{\xi_0})+{\threesecofA}-\iota_{\oldpsi}}{x+\xi}}{y+\eta}(v)\\
&\equalbyreason{\eqref{Tech-2}}&
\CBr{{\dstar}(x) +L_\xi +L_x-\partial(x) -\iota_{\dd(\xi)}}{y+\eta}(v)
-(\iota_\eta\iota_\xi{\threesecofA})\wedge v -\iota_{(\iota_y\iota_x\oldpsi)}v\\
&=&\big(\BrA{x}{y} +L_\xi y -\iota_\eta\big({\dstar}(x)\big) -\iota_\eta\iota_{\xi}{\threesecofA}\big)\wedge v \qquad\in\secA\wedge v \\
&&\qquad+\big(\BrAd{\xi}{\eta} +L_x\eta -\iota_y\big(\dd(\xi)\big) -\iota_y\iota_{x}\oldpsi \big)(v)
\qquad\in\secAd \cdot v \\
&\equalbyreason{\eqref{Eqn:Dorfmanbracket}}&
\big((x+\xi)\circ(y+\eta)\big)(v)\qquad\in\Gamma(A\oplus A^\ast)\cdot v.
\end{eqnarray*}

\subsubsection{Proof of Theorem~\ref{f-D2}}
In the proof of Theorem~\ref{MAIN THM}, we have obtained the expressions of the Dirac generating operator in~\eqref{expression of D} and the characteristic function in~\eqref{Eqt:brevefexplicitly}.
We now verify that the characteristic function
\[
\breve{f} =\frac{1}{4}\langle\xi_0|X_0\rangle -\frac{1}{2}\partial(X_0) - \langle{\threesecofA} | \oldpsi\rangle.
\]
is indeed an invariant of the proto-bialgebroid, i.e., it does not depend on the choices of the volume $s \in \Omega^m(M)$ of $M$, the top form $\Omega \in \Gamma(\wedge^n A^\ast)$ and its dual $V \in \Gamma(\wedge^n A)$.

Assume that $s^\prime = gs$ and that $\Omega^\prime = h\Omega, V^\prime = 1/h V$ for some nowhere vanishing smooth functions $g,h \in C^\infty(M)$.
Let $X_0^\prime \in \Gamma(A), \xi_0^\prime \in \Gamma(A^\ast)$ be the modular elements associated with $s^\prime, \Omega^\prime, V^\prime$.
Using Equations~\eqref{X_0}, we have for all $\xi \in \Gamma(A^\ast)$,
\begin{align*}
  \langle \xi | X^\prime_0 \rangle \Omega^\prime \otimes s^\prime &= L_\xi (\Omega^\prime \otimes s^\prime) = L_{\xi} (gh \Omega \otimes s) = a_\ast(\xi)(gh) (\Omega \otimes s) + gh L_{\xi}(\Omega \otimes s)\\
  &= \frac{a_\ast(\xi)(gh)}{gh}\Omega^\prime \otimes s^\prime + \langle \xi | X_0 \rangle \Omega^\prime \otimes s^\prime = \langle \xi | X_0 + d_\ast\log(gh) \rangle \Omega^\prime \otimes s^\prime.
\end{align*}
Thus, we have
\[
   X_0^\prime = X_0 + d_\ast (\log(gh)) = X_0 + d_\ast(\log g + \log h).
\]
By a similar computation and using~\eqref{xi_0}, we have
\[
 \xi_0^\prime = \xi_0 + d_A(\log g - \log h).
\]
Denote by $\partial^\prime$ the BV operator induced from $\Omega^\prime$ and $V^\prime$. Then we have for all $r \in \Gamma(\wedge^k A)$,
\begin{align*}
 \partial^\prime(r) &= -(-1)^{n(k+1)}(V^{\prime\sharp} \circ d_A \circ \Omega^{\prime\sharp})(r) =  -(-1)^{n(k+1)}1/h V^{\sharp} (d_A (h\Omega^{\sharp}(r))) \\
 &=  \partial(r) - (-1)^{n(k+1)}1/h V^{\sharp}(d_Ah \wedge \Omega^\sharp(r)) = \partial(r) - \iota_{d_A \log h} r.
\end{align*}
A direct computation shows that
\begin{align*}
  \breve{f}^\prime - \breve{f} &= \frac{1}{4}\left(\langle\xi^\prime_0|X^\prime_0\rangle - \langle \xi_0 | X_0 \rangle \right) - \frac{1}{2}(\partial^\prime(X^\prime_0) - \partial (X_0)) = 0.
  \end{align*}
Thus $\breve{f}$ is indeed an invariant of the proto-bialgebroid.

Finally, we check that $\breve{f}$ is completely determined by either statement $(a)$ or $(b)$.
Note that
\begin{eqnarray*}
L_{X_0}(\Omega\otimes s)\otimes V &=&(L_{X_0}\Omega)\otimes s\otimes V + \Omega\otimes (L_{X_0}s)\otimes V\\
&\equalbyreason{\eqref{LLL5}}& -\Omega\otimes s\otimes (L_{X_0}V) +\Omega\otimes (L_{X_0}s)\otimes V\\
&=& -2\Omega\otimes s\otimes (L_{X_0}V) +\Omega\otimes L_{X_0}(s\otimes V)\\
&\equalbyreason{\eqref{LLL2}\eqref{xi_0}}& \big(-2\partial(X_0)
+\langle \xi_0|X_0\rangle\big) \Omega\otimes s\otimes V\\
&\equalbyreason{\eqref{Eqt:brevefexplicitly}}& 4\big(\breve{f}+\langle{\threesecofA}|\oldpsi\rangle\big)
\Omega\otimes s\otimes V.
\end{eqnarray*}
The proves statement $(a)$.
The proof of Statement $(b)$ is similar and thus omitted.
\begin{Rem}
  Although the characteristic function $\breve{f} = \bigD^2$ is an invariant, the Dirac generating operator $\bigD$ does depend on the choices of $s, \Omega$ and $V$.
  In a sense, what curvatures are to connections, characteristic functions are to Dirac generating operators.
\end{Rem}

\section{Examples}\label{Sec:examples}
In this section, we consider some concrete proto-bialgebroids and compute the corresponding Dirac generating operators and characteristic functions.

\subsection{Proto-bialgebroids over Euclidean spaces}
We start with a proto-bialgebroid over an Euclidean space $\R^m$, which can be seen as a local model of proto-bialgebroid.

Let $(A,\BrA{\tobefilledin}{\tobefilledin},\BrAd{\tobefilledin}{\tobefilledin}, a_A, a_\ast, {\threesecofA}, \oldpsi)$ be a proto-bialgebroid, where $A= \R^m \times \R^n$ is a product vector bundle over $\R^m$ of rank $n$.
Choose a coordinate system $\{q^1,\ldots,q^m\}$ for base space $\R^m$. Suppose that  $\{e_1,\ldots,e_n\}$ is a basis of $\secA$ and that $\{e^1,\ldots,e^n\} $ is the dual basis of $\secAd$.
We assume that the two elements $\phi$ and $\tau$ are of the form
\begin{eqnarray*}
\oldpsi=\bar{\oldpsi}_{ijk}~ e^i\wedge e^j \wedge e^k
\in\OmegaAdegree{3}  \quad\mbox{ and }\quad
{\threesecofA}=\bar{{\threesecofA}}^{ijk}~ e_i \wedge e_j \wedge e_k
\in\wedgeAdegree{3}.
\end{eqnarray*}
And the skew-symmetric dull algebroid structures on $A$ and $A^\ast$ can be written as follow:
\begin{align*}
\BrA{e_i}{e_j} &=a_{ij}^k~ e_k, & a_A(e_i) &=A_i^\alpha \frac{\partial}{\partial q^\alpha},\\
\BrAd{e^i}{e^j} &=b_k^{ij}~ e^k, & a_\ast(e^i)&=B^{i\alpha}\frac{\partial}{\partial q^\alpha}.
\end{align*}
It follows that the two associated derivations $\dd$ and $\dstar$ are given by
\begin{align*}
\dstar(q^\alpha) &=B^{i \alpha} e_i, &
\dstar(e_i) &=-\frac{1}{2} b_i^{jk} e_j \wedge e_k,\\
\dd(q^\alpha) &=A_i^\alpha e^i, &
\dd(e^i) &= -\frac{1}{2} a^i_{jk} e^j \wedge e^k.
\end{align*}
Assume that
\begin{eqnarray*}
	\Omega&=&e^1\wedge \ldots \wedge e^n\in \OmegaAdegree{n},\\
	V&=&e_1\wedge \ldots \wedge e_n\in \wedgeAdegree{n},\\
	s&=&d q^1\wedge\ldots\wedge d q^m\in\Gamma(\wedge^m T^\ast \R^m).
\end{eqnarray*}
We now compute the associated BV operator $\partial$. It is clear that $\partial(q^\alpha)=0$.
And
\begin{align}\label{Eqn:ei}
\partial(e_i)
&= (-1)^{n-i}\partial\big(V^\sharp(e^1\wedge\ldots\wedge\widehat{e^i}\wedge\ldots\wedge e^n)\big) \notag \\
&\equalbyreason{\eqref{Eqn:V-d-partial}} (-1)^i V^\sharp\big(\dd (e^1\wedge\ldots\wedge\widehat{e^i}\wedge\ldots\wedge e^n)\big)
\notag \\
&=(-1)^i V^\sharp\big( \sum_{j=1}^{i-1}(-1)^{j-1} e^1\wedge\ldots\wedge e^{j-1}\wedge
 \dd(e^j)\wedge e^{j+1}\wedge\ldots\wedge\widehat{e^i}
\wedge\ldots\wedge e^n \notag\\
&\qquad+\sum_{k=i+1}^n(-1)^k  e^1\wedge\ldots\wedge\widehat{e^i}\wedge\ldots
\wedge e^{k-1}\wedge \dd(e^k)\wedge e^{k+1}\wedge\ldots\wedge e^n\big) \notag \\
&= (-1)^iV^\sharp\big(
\sum_{j=1}^{i-1}(-1)^{j-1} e^1\wedge\ldots\wedge
\big(-\frac{1}{2} a^j_{ij} e^i \wedge e^j
-\frac{1}{2} a^j_{ji} e^j \wedge e^i\big)
 \wedge \ldots\wedge\widehat{e^i}
\wedge\ldots\wedge e^n \notag \\
&\qquad+\sum_{k=i+1}^n(-1)^k e^1\wedge\ldots\wedge\widehat{e^i}\wedge\ldots\wedge
\big(-\frac{1}{2} a^k_{ik} e^i \wedge e^k
-\frac{1}{2} a^k_{ki} e^k \wedge e^i\big) \wedge \ldots\wedge e^n\big) \notag \\
&= (-1)^i V^\sharp\big((-1)^{j-1} (-1)^{i-j} a_{ji}^j \Omega
+(-1)^k (-1)^{k-i}a_{ik}^k \Omega\big) \notag \\
&=-\sum_{j=1}^{i-1} a_{ji}^j+\sum_{k=i+1}^n a_{ik}^k = a_{ij}^j.
\end{align}

Observe that
\begin{eqnarray*}
\langle e^i|X_0\rangle\Omega\otimes s
&\equalbyreason{\eqref{X_0}}&L_{e^i}(\Omega\otimes s)
=(L_{e^i}\Omega)\otimes s
+\Omega\otimes L_{a_*(e^i)}s\\
&=&\big(L_{e^i}(e^1\wedge\ldots\wedge e^n)\big)\otimes s
+\Omega\otimes L_{B^{i\alpha}\frac{\partial}{\partial q^\alpha}}
(\dM q^1\wedge\ldots\wedge\dM q^m)\\
&=&\sum_{j=1}^n e^1\wedge\ldots\wedge \BrAd{e^i}{e^j}\wedge\ldots\wedge e^n\otimes s\\
&&\qquad+\Omega\otimes \big(\sum_{\beta=1}^m\dM q^1\wedge\ldots
\wedge\dM(L_{B^{i\alpha}\frac{\partial}{\partial q^\alpha}}q^\beta)\wedge\ldots\wedge \dM q^m\big)\\
&=&(b_j^{ij}+\frac{\partial{B^{i\beta}}}{\partial q^\beta})\Omega\otimes s.
\end{eqnarray*}
Thus, we obtain
\begin{equation}
\label{Eqt:X0local}
X_0=(b_j^{ij}+\frac{\partial{B^{i\alpha}}}{\partial q^\alpha})e_i .
\end{equation}
In a similar approach, we have
\[
\xi_0=(\frac{\partial{A_i^{\alpha}}}{\partial q^\alpha}+a_{ij}^j)e^i .
\]
Using Equations~\eqref{Eqn:ei} and \eqref{Eqt:X0local}, we obtain
\begin{eqnarray*}
\partial(X_0)
=a_{ij}^j (b_k^{ik}+\frac{\partial{B^{i\alpha}}}{\partial q^\alpha})
- A_i^\beta\frac{\partial}{\partial q^\beta}(b_j^{ij}+\frac{\partial{B^{i\alpha}}}{\partial q^\alpha}).
\end{eqnarray*}

Hence, by Theorem \ref{f-D2},  the characteristic function in this case is given by
\begin{eqnarray*}
\bigD^2 &\equalbyreason{\eqref{Eqt:brevefexplicitly}}&
 \frac{1}{4}\langle\xi_0|X_0\rangle
-\frac{1}{2}\partial(X_0)
-\langle{\threesecofA}|\oldpsi\rangle\\
&=&\frac{1}{4}(\frac{\partial{A_i^{\alpha}}}{\partial q^\alpha}+a_{ij}^j)
(b_k^{ik}+\frac{\partial{B^{i\alpha}}}{\partial q^\alpha})
-\frac{1}{2}a_{ij}^j (b_k^{ik}+\frac{\partial{B^{i\alpha}}}{\partial q^\alpha})
+\frac{1}{2}A_i^\beta\frac{\partial}{\partial q^\beta}(b_j^{ij}+\frac{\partial{B^{i\alpha}}}{\partial q^\alpha})
-\bar{\oldpsi}_{ijk} \bar{{\threesecofA}}^{ijk}\\
&=&\frac{1}{4}(\frac{\partial{A_i^{\alpha}}}{\partial q^\alpha}-a_{ij}^j)
(b_k^{ik}+\frac{\partial{B^{i\alpha}}}{\partial q^\alpha})
+\frac{1}{2}A_i^\beta(\frac{\partial b_j^{ij}}{\partial q^\beta}
+\frac{\partial^2 B^{i \alpha}}{\partial q^\alpha \partial q^\beta})
-\bar{\oldpsi}_{ijk} \bar{{\threesecofA}}^{ijk}.
\end{eqnarray*}

\subsection{Proto-bialgebras}\label{Sec:ProtoLiebialgebra}
\subsubsection{$r$-dimensional proto-bialgebras}
We focus on proto-bialgebras $(\g,\g^\ast)$, i.e., proto-bialgebroids over a single point $M=\{\ast\}$. The corresponding Courant algebroid is denoted by $\g\oplus \g^\ast$, where $\g$ and $\g^\ast$ are $r$-dimensional vector spaces in duality.
\begin{Def}\label{Def:proto-bialgebra}
 A \textbf{proto-bialgebra} is a quadruple $(\g,\g^\ast,{\threesecofA},\oldpsi)$, where
 $(\g,\Brg{\tobefilledin}{\tobefilledin})$ and $(\g^\ast, \Brgd{\tobefilledin}{\tobefilledin})$ are two skew-symmetric dull algebras in duality, ${\threesecofA}  \in \wedge^3 \g  $, and $\oldpsi \in  \wedge^3 \g^\ast$, satisfying the following properties:
\begin{align}\label{Jacobi of g1}
\Brg{\Brg{x}{y}}{z}+\Brg{\Brg{y}{z}}{x}+\Brg{\Brg{z}{x}}{y}
		&=\iota_{\oldpsi}\big({\dstar} (x\wedge y\wedge z)\big), \\
\label{Jacobi of g^*}
\Brgd{\Brgd{\xi}{\eta}}{\chi} +\Brgd{\Brgd{\eta}{\chi}}{\xi} +\Brgd{\Brgd{\chi}{\xi}}{\eta}
		& = \iota_{{\threesecofA}}\big(\dd(\xi\wedge\eta\wedge \chi)\big), \\
\label{com:g-g}\Brg{{\dstar}(x)}{y}+\Brg{x}{{\dstar}(y)}+ \iota_{(\iota_y\iota_x\oldpsi)}{\threesecofA} &= {\dstar}\big(\Brg{x}{y}\big), \\
\label{dpsi-g}
		\dd(\oldpsi) &=0; \\
\label{dvarphi-g}
{\dstar}({\threesecofA}) &= 0,
	\end{align}
where the maps ${\dstar}\colon \wedge^{\bullet}\g \rightarrow \wedge^{\bullet+1}\g$ and $\dd \colon \wedge^{\bullet}\g^\ast \rightarrow \wedge^{\bullet+1}\g^\ast$  are the derivations arising from the skew-symmetric dull algebras $(\g^\ast, \Brgd{\tobefilledin}{\tobefilledin})$ and  $(\g,\Brg{\tobefilledin}{\tobefilledin})$, respectively.
\end{Def}
Let us choose a basis $\{e_1,e_2,\ldots,e_r\}$ of $\g$ with dual basis $\{e^1,e^2,\ldots,e^r\} $ of $\g^\ast$.  Assume that
\begin{eqnarray*}
\oldpsi=\bar{\oldpsi}_{ijk} e^i\wedge e^j \wedge e^k  \quad\mbox{ and }\quad
{\threesecofA}=\bar{{\threesecofA}}^{ijk} e_i \wedge e_j \wedge e_k.
\end{eqnarray*}
Let  $a_{ij}^k$ and $b_k^{ij}$ be the structure constants of the two brackets $[\cdot,\cdot]_\g$ and $[\cdot,\cdot]_{\g^\ast}$, that is,
\begin{eqnarray*}
\Brg{e_i}{e_j}=a_{ij}^k e_k  \quad\mbox{ and }\quad
\Brgd{e^i}{e^j}=b_k^{ij} e^k.
\end{eqnarray*}
Then the two operators ${\dstar}$ and $\partial$ are given by
\begin{eqnarray*}
\dstar(e_i)=-\frac{1}{2} b_i^{jk} e_j \wedge e_k, \qquad
\partial(e_i)=a_{ij}^j, \quad\mbox{and}\quad
\partial(e_i \wedge e_j) =-a_{ij}^k e_k +a_{ik}^k\, e_j -a_{jk}^k\, e_i.
\end{eqnarray*}
The Dorfman bracket of the corresponding Courant algebroid $\g\oplus \g^\ast$ reads
\[
(x+\xi)\circ(y+\eta) =(\Brg{x}{y} +\ad_\xi^\ast y -\ad^\ast_\eta x -\iota_\eta\iota_{\xi}{\threesecofA})
+(\Brgd{\xi}{\eta} +\ad^\ast_x\eta -\ad^\ast_y \xi -\iota_y\iota_{x}\oldpsi ),
\]
which is indeed \textit{skew-symmetric},  and satisfies the Jacobi identity.
Therefore, $(\g\oplus \g^\ast,\circ)$ is indeed a Lie algebra,  and $(\g\oplus \g^\ast,\circ,\langle \tobefilledin,\tobefilledin\rangle)$ is a \textit{quadratic Lie algebra}.

For expressions of modular elements $X_0\in \g$ and $\xi_0\in \g^\ast$,  we choose the particular volume forms $\Omega=  e^1\wedge \ldots \wedge e^r\in \wedge^{r}\g^\ast$ and $V=  e_1\wedge \ldots \wedge e_r\in \wedge^r \g$. Thus, we have
 \[
 \langle X_0|\xi\rangle=\trace(\ad_\xi), \qquad \langle \xi_0|x\rangle =\trace(\ad_x),\quad \forall \xi\in \g^\ast, x\in \g.
 \]
Hence, we have
\begin{eqnarray*}
	X_0=b_j^{ij} e_i \qquad\mbox{ and }\qquad \xi_0=a_{ij}^j e^i.
\end{eqnarray*}	
It follows that
\begin{eqnarray*}
\partial(X_0)=a_{ij}^j b_k^{ik}.
\end{eqnarray*}
The characteristic function $\bigD^2$, which is just a real number, is given by
\begin{eqnarray*}
\bigD^2 \equalbyreason{\eqref{Eqt:brevefexplicitly}} \frac{1}{4}\langle\xi_0|X_0\rangle -\frac{1}{2}\partial(X_0)
-\langle{\threesecofA}|\oldpsi\rangle =-\frac{1}{4} a_{ij}^j b_k^{ik} -\bar{\oldpsi}_{ijk} \bar{{\threesecofA}}^{ijk}.
\end{eqnarray*}

\subsubsection{$3$-dimensional case.}
Finally, we study $3$-dimensional proto-bialgebras in detail. In this case,
\begin{eqnarray}
\mbox{Equation}~\eqref{Jacobi of g1}&~\Leftrightarrow~&
\Brg{\Brg{x}{y}}{z}+\Brg{\Brg{y}{z}}{x}+\Brg{\Brg{z}{x}}{y}=0;\nonumber \\
\mbox{Equation}~\eqref{Jacobi of g^*}&~\Leftrightarrow~&
\Brgd{\Brgd{\xi}{\eta}}{\chi}
+\Brgd{\Brgd{\eta}{\chi}}{\xi}
+\Brgd{\Brgd{\chi}{\xi}}{\eta}=0;\nonumber \\
\mbox{Equation}~\eqref{com:g-g}&~\Leftrightarrow~&
\label{proto-detailcondition3}
{\dstar}(\Brg{x}{y})=\Brg{{\dstar}(x)}{y}+\Brg{x}{{\dstar}(y)}+\iota_{(\iota_{x\wedge y}\oldpsi)}{\threesecofA}.
\end{eqnarray}
Equations~\eqref{dpsi-g} and \eqref{dvarphi-g} hold by degree reasons.
Therefore, both $(\g=\mbox{span}\{e_1,e_2,e_3\},\Brg{\tobefilledin}{\tobefilledin})$ and $(\g^\ast =\mbox{span}\{e^1,e^2,e^3\},\Brgd{\tobefilledin}{\tobefilledin})$ are indeed Lie algebras.
However,  the pair $(\g,\g^\ast)$ is \textbf{not} a Lie bialgebra unless the term $\iota_{(\iota_{x\wedge y}\oldpsi)}{\threesecofA}$ in \eqref{proto-detailcondition3} vanishes, which holds if either the element ${\threesecofA}$ or $\oldpsi$ is zero. This special case has been well studied in \cite{HL}.

In what follows we assume that both ${\threesecofA}$ and $\oldpsi$ are \textbf{nontrivial}.
We assume that
\begin{eqnarray*}
		\oldpsi=\bar{\oldpsi} ~e^1\wedge e^2\wedge e^3 \in  \wedge^3\g^*  \quad
\mbox{ and }\quad
		{\threesecofA}=\bar{{\threesecofA}} ~e_1\wedge e_2\wedge e_3\in \wedge^3\g ,
	\end{eqnarray*}
for some nonzero numbers $\bar{\oldpsi}, \bar{{\threesecofA}}\in {\R}$.
Note that
 \begin{eqnarray*}
			{\dstar}(e_1)
				&=&-b_1^{12} e_1\wedge e_2
				-b_1^{23} e_2\wedge e_3
				-b_1^{31} e_3\wedge e_1,\\
				{\dstar}(e_2)
				&=&-b_2^{12} e_1\wedge e_2
				-b_2^{23} e_2\wedge e_3
				-b_2^{31} e_3\wedge e_1,\\
				{\dstar}(e_3)
				&=&-b_3^{12} e_1\wedge e_2
				-b_3^{23} e_2\wedge e_3
				-b_3^{31} e_3\wedge e_1.
	\end{eqnarray*}
 Substituting $(x,y)=(e_1,e_2), (e_2,e_3), (e_3,e_1)$ in Equation \eqref{proto-detailcondition3}, we obtain the following constraints on structure constants:
		\begin{equation}\label{detail-1}
			\begin{cases}
					a_{12}^3 b_3^{12}
					-a_{23}^1 b_1^{23}
					-a_{23}^2 b_1^{31}
					-a_{31}^1 b_2^{23}
					-a_{31}^2 b_2^{31}
					+\bar{\oldpsi} \bar{{\threesecofA}} =0  \\
					a_{12}^1 b_1^{23}
					+a_{12}^2  b_1^{31}
					+a_{12}^3 b_3^{23}
					+a_{12}^3 b_1^{12}
					+a_{23}^3 b_1^{23}
					+a_{31}^3 b_2^{23} =0  \\
					a_{12}^1 b_2^{23}
					+a_{12}^2 b_2^{31}
					+a_{12}^3 b_3^{31}
					+a_{12}^3 b_2^{12}
					+a_{23}^3 b_1^{31}
					+a_{31}^3 b_2^{31} =0;
			\end{cases}
		\end{equation}
		\begin{equation}\label{detail-2}
			\begin{cases}
					a_{12}^1 b_3^{12}
					+a_{23}^1 b_1^{12}
					+a_{23}^1 b_3^{23}
					+a_{23}^2 b_3^{31}
					+a_{23}^3 b_3^{12}
					+a_{31}^1 b_2^{12}
					=0
					\\
					-a_{12}^2 b_3^{31}
					-a_{12}^3 b_3^{12}
					+a_{23}^1 b_1^{23}
					-a_{31}^2 b_2^{31}
					-a_{31}^3 b_2^{12}
					+\bar{\oldpsi} \bar{{\threesecofA}}
					=0
					\\
					a_{12}^1 b_3^{31}
					+a_{23}^1 b_1^{31}
					+a_{23}^1 b_2^{23}
					+a_{23}^2 b_2^{31}
					+a_{23}^3 b_2^{12}
					+a_{31}^1 b_2^{31}
					=0;
			\end{cases}
		\end{equation}
		and		
\begin{eqnarray}\label{detail-3}
			\begin{cases}
					a_{12}^2 b_3^{12}
					+a_{23}^2 b_1^{12}
					+a_{31}^1 b_3^{23}
					+a_{31}^2 b_2^{12}
					+a_{31}^2 b_3^{31}
					+a_{31}^3 b_3^{12}
					=0
					\\
					a_{12}^2 b_3^{23}
					+a_{23}^2 b_1^{23}
					+a_{31}^1 b_1^{23}
					+a_{31}^2 b_1^{31}
					+a_{31}^2 b_2^{23}
					+a_{31}^3 b_1^{12}
					=0
					\\
					-a_{12}^1 b_3^{23}
					-a_{12}^3 b_3^{12}
					-a_{23}^1 b_1^{23}
					-a_{23}^3 b_1^{12}
					+a_{31}^2 b_2^{31}
					+\bar{\oldpsi} \bar{{\threesecofA}}
					=0.
			\end{cases}
\end{eqnarray} 		
 Moreover, the modular elements are
$X_0=\sum_{i,j=1}^3 b_j^{ij} e_i$ and $\xi_0=\sum_{i,j=1}^3 a_{ij}^j e^i$. We also have
\begin{eqnarray*}
\partial(X_0)
=(a_{12}^2+a_{13}^3)(b_2^{12}+b_3^{13})
+(a_{21}^1+a_{23}^3)(b_1^{21}+b_3^{23})
+(a_{31}^1+a_{32}^2)(b_1^{31}+b_2^{32}).
\end{eqnarray*}
Thus, the characteristic function  	 is given by the real number
\begin{eqnarray*}
\bigD^2
&=&\frac{1}{4}\langle\xi_0|X_0\rangle
-\frac{1}{2}\partial(X_0)
-\langle{\threesecofA}|\oldpsi\rangle\\
&=&-\frac{1}{4}(a_{12}^2+a_{13}^3)(b_2^{12}+b_3^{13})
-\frac{1}{4}(a_{21}^1+a_{23}^3)(b_1^{21}+b_3^{23})
-\frac{1}{4}(a_{31}^1+a_{32}^2)(b_1^{31}+b_2^{32})
-\bar{\oldpsi} \bar{{\threesecofA}}.
\end{eqnarray*}

\begin{Ex}
Consider $\g= \mathfrak{sl}(2;\R) = \mbox{span}\{e_1,e_2,e_3\}$ with the standard relations
\[
\Brg{e_1}{e_2}=2 e_2,\quad \Brg{e_1}{e_3}=-2 e_3,\quad \Brg{e_2}{e_3}=e_1.
\]
Suppose that $\g^\ast = \mbox{span}\{e^1, e^2, e^3\}$ is also endowed with a Lie bracket: $\Brgd{e^i}{e^j}=b_k^{ij} e^k, 1\leqslant i,j,k \leqslant 3$.
A well-known choice is given by	
\begin{eqnarray*}
		\Brgd{e^1}{e^2}=\frac{1}{4} e^2, \quad
		\Brgd{e^1}{e^3}=\frac{1}{4} e^3, \quad
		\Brgd{e^2}{e^3}=0,
	\end{eqnarray*}
which makes $(\g,\g^\ast)$ into a Lie bialgebra (see \cite{LuThesis}).

By Equations \eqref{detail-1}, \eqref{detail-2}, and \eqref{detail-3},  we see that $(\g,\g^\ast)$ is a proto-bialgebra if and only if there exists two real numbers $\bar{\phi}$ and $\bar{\tau}$  such that the following equations hold:
	\begin{eqnarray*}
		\begin{cases}
b_1^{2 3} (b_2^{1 2}-b_3^{3 1})=0, \\
b_1^{23} b_1^{3 1} =0, \\
b_1^{23} b_1^{1 2} =0,\\
b_1^{23} =b_2^{12}+b_3^{31} =\bar{\oldpsi} \bar{{\threesecofA}},\\
b_1^{31} + b_2^{23} =0, \\
b_1^{12} + b_3^{23} =0, \\
b_2^{31} =b_3^{12} =0.
		\end{cases}
	\end{eqnarray*}
Moreover, if $b_1^{23}=\bar{\oldpsi} \bar{{\threesecofA}}\neq 0$, then the above conditions reduce to
\begin{eqnarray*}
		\begin{cases}
     		b_1^{23} =2 b_2^{12} =2 b_3^{31} =\bar{\oldpsi} \bar{{\threesecofA}}, \\
			b_1^{12}=b_1^{31} =b_2^{23}=b_2^{31} =b_3^{12}=b_3^{23} =0.
		\end{cases}
	\end{eqnarray*}
For example, we can take $\bar{\oldpsi} \bar{{\threesecofA}}=b_1^{23}=1, b_2^{12}=b_3^{31}=\frac{1}{2}$, i.e.,
\[
\Brgd{e^1}{e^2}=\frac{1}{2} e^2,\quad \Brgd{e^1}{e^3}=-\frac{1}{2} e^3,\quad \Brgd{e^2}{e^3}=e^1.
\]
Since
		\begin{eqnarray*}
\xi_0&=&(a_{12}^2+a_{13}^3)e^1+(a_{21}^1+a_{23}^3) e^2+(a_{31}^1+a_{32}^2) e^3=0, \\
\mbox{ and } \quad
	X_0&=&(b_2^{12}+b_3^{13})e_1
		+(b_1^{21}+b_3^{23})e_2
		+(b_1^{31}+b_2^{32})e_3 =0,
		\end{eqnarray*}
it follows that the characteristic function reads
\begin{eqnarray*}
	\bigD^2=\breve{f}=-\bar{\oldpsi} \bar{{\threesecofA}}~(\neq 0).
\end{eqnarray*}

\end{Ex}

\begin{bibdiv}
  \begin{biblist}

 \bib{AX2001manu}{article}{
  author={Alekseev, A.},
  author={Xu, P.},
  title={Derived brackets and Courant algebroids},
  date={2001},
  note={Unpublished manuscript},
}

\bib{A-Costa2020}{article}{
   author={Antunes, P.},
   author={Nunes da Costa, J. M.},
   title={Split Courant algebroids as $L_\infty$-structures},
   journal={J. Geom. Phys.},
   volume={155},
   date={2020},
   pages={103790, 19},
   issn={0393-0440},
}

 \bib{BCSX}{article}{
  author={Bandiera, R.},
  author={Chen, Z.},
  author={Sti\'enon, M.},
  author={Xu, P.},
  title={Shifted derived Poisson manifolds associated with Lie pairs},
  journal={Comm. Math. Phys.},
  date={2020},
  volume={375},
  number={3},
  pages={1717--1760},
}

\bib{B-YKS1993}{article}{
   author={Bangoura, M.},
   author={Kosmann-Schwarzbach, Y.},
   title={The double of a Jacobian quasi-bialgebra},
   journal={Lett. Math. Phys.},
   volume={28},
   date={1993},
   number={1},
   pages={13--29},
   issn={0377-9017},
}
		
\bib{CabrasVinogradov}{article}{
   author={Cabras, A.},
   author={Vinogradov, A.M.},
   title={Extensions of the Poisson bracket to differential forms and
   multi-vector fields},
   journal={J. Geom. Phys.},
   volume={9},
   date={1992},
   number={1},
   pages={75--100},
   issn={0393-0440},
}
		
 \bib{CM}{article}{
  author={Crainic, M.},
  author={Moerdijk, I.},
  title={Deformations of Lie brackets: cohomological aspects},
  journal={J. Eur. Math. Soc.},
  date={2008},
  volume={10},
  number={4},
  pages={1037--1059},
}

 \bib{CS}{article}{
  author={Chen, Z.},
  author={Sti\'enon, M.},
  title={Dirac generating operators and Manin triples},
  journal={J. Lond. Math. Soc.},
  date={2009},
  volume={79},
  number={2},
  pages={399--421},
}

 \bib{CSX}{article}{
  author={Chen, Z.},
  author={Sti\'enon, M.},
  author={Xu, P.},
  title={On regular Courant algebroids},
  journal={J. Symplectic Geom.},
  date={2013},
  volume={11},
  number={1},
  pages={1--24},
}

\bib{DeserStasheff2014}{article}{
   author={Deser, A.},
   author={Stasheff, J.},
   title={Even symplectic supermanifolds and double field theory},
   journal={Comm. Math. Phys.},
   volume={339},
   date={2015},
   number={3},
   pages={1003--1020},
   issn={0010-3616},
}

\bib{Drinfeld1983}{article}{
   author={Drinfeld, V. G.},
   title={Hamiltonian structures on Lie groups, Lie bialgebras and the
   geometric meaning of classical Yang-Baxter equations},
   language={Russian},
   journal={Dokl. Akad. Nauk SSSR},
   volume={268},
   date={1983},
   number={2},
   pages={285--287},
   issn={0002-3264},
}
		
\bib{Drinfeld1986}{article}{
   author={Drinfeld, V. G.},
   title={Quantum groups},
   conference={
      title={Proceedings of the International Congress of Mathematicians,
      Vol. 1, 2},
      address={Berkeley, Calif.},
      date={1986},
   },
   book={
      publisher={Amer. Math. Soc., Providence, RI},
   },
   date={1987},
   pages={798--820},
}

\bib{Drinfeld1990}{article}{
   author={Drinfeld, V. G.},
   title={Quasi-Hopf algebras},
   language={Russian},
   journal={Algebra i Analiz},
   volume={1},
   date={1989},
   number={6},
   pages={114--148},
   issn={0234-0852},
   translation={
      journal={Leningrad Math. J.},
      volume={1},
      date={1990},
      number={6},
      pages={1419--1457},
      issn={1048-9924},
   },
}

 \bib{ELW}{article}{
  author={Evens, S.},
  author={Lu, J.},
  author={Weinstein, A.},
  title={Transverse measures, the modular class and a cohomology pairing for Lie algebroids},
  journal={Quart. J. Math. Oxford Ser.},
  date={1999},
  volume={50},
  number={200},
  pages={417--436},
}

\bib{FegierZambon2015}{article}{
   author={Fr\'{e}gier, Y.},
   author={Zambon, M.},
   title={Simultaneous deformations and Poisson geometry},
   journal={Compos. Math.},
   volume={151},
   date={2015},
   number={9},
   pages={1763--1790},
   issn={0010-437X},
}

 \bib{GG}{article}{
  author={Ginot, G.},
  author={Gr\"{u}tzmann, M.},
  title={Cohomology of Courant algbroids with split base},
  journal={J. Symplectic Geom.},
  date={2009},
  volume={7},
  number={3},
  pages={311--335},
}

 \bib{GMX}{article}{
  author={Gr\"{u}tzmann, M.},
  author={Michel, J.},
  author={Xu, P.},
  title={Weyl quantization of degree $2$ symplectic graded manifolds},
  journal={J. Math. Pures Appl.},
  date={2021},
  volume={154},
  number={2},
  pages={67--107},
}

\bib{Gualtieri2007}{article}{
   author={Gualtieri, M.},
   title={Generalized complex geometry},
   journal={Ann. of Math. (2)},
   volume={174},
   date={2011},
   number={1},
   pages={75--123},
   issn={0003-486X},
}

\bib{Hitchin1999}{article}{
   author={Hitchin, N.},
   title={Lectures on special Lagrangian submanifolds},
   conference={
      title={Winter School on Mirror Symmetry, Vector Bundles and Lagrangian
      Submanifolds},
      address={Cambridge, MA},
      date={1999},
   },
   book={
      series={AMS/IP Stud. Adv. Math.},
      volume={23},
      publisher={Amer. Math. Soc., Providence, RI},
   },
   date={2001},
   pages={151--182},
}

 \bib{HL}{article}{
  author={Hong, W.},
  author={Liu, Z.},
  title={Lie bialgebras on $k^3$ and Lagrange varieties},
  journal={J. Lie Theory},
  date={2009},
  volume={19},
  number={4},
  pages={639--659},
}

 \bib{MJ2018}{article}{
  author={Jotz Lean, M.},
   title={Dorfman connections and Courant algebroids},
  journal={J. Math. Pures Appl.},
  date={2018},
  volume={9},
  number={116},
  pages={1--39},
}

\bib{YKS1991}{article}{
   author={Kosmann-Schwarzbach, Y.},
   title={From ``quantum groups'' to ``quasi-quantum groups''},
   conference={
      title={Symmetries in science, V},
      address={Lochau},
      date={1990},
   },
   book={
      publisher={Plenum, New York},
   },
   date={1991},
   pages={369--393},
}

\bib{YKS1992}{article}{
   author={Kosmann-Schwarzbach, Y.},
   title={Jacobian quasi-bialgebras and quasi-Poisson Lie groups},
   conference={
      title={Mathematical aspects of classical field theory},
      address={Seattle, WA},
      date={1991},
   },
   book={
      series={Contemp. Math.},
      volume={132},
      publisher={Amer. Math. Soc., Providence, RI},
   },
   date={1992},
   pages={459--489},
}

 \bib{KS2005}{article}{
   author={Kosmann-Schwarzbach, Y.},
   title={Quasi, twisted, and all that$\ldots$in Poisson geometry and Lie
   algebroid theory},
   conference={
      title={The breadth of symplectic and Poisson geometry},
   },
   book={
      series={Progr. Math.},
      volume={232},
      publisher={Birkh\"{a}user Boston, Boston, MA},
   },
   date={2005},
   pages={363--389},
}

\bib{Kostant1999}{article}{
   author={Kostant, B.},
   title={A cubic Dirac operator and the emergence of Euler number
   multiplets of representations for equal rank subgroups},
   journal={Duke Math. J.},
   volume={100},
   date={1999},
   number={3},
   pages={447--501},
   issn={0012-7094},
}

 \bib{LWX}{article}{
  author={Liu, Z.},
   author={Weinstein, A.},
    author={Xu, P.},
   title={Manin triples for Lie bialgebroids},
  journal={J. differential geom.},
  date={1997},
  volume={45},
  number={4},
  pages={547--574},
}

\bib{LuThesis}{book}{
   author={Lu, J.},
   title={Multiplicative and affine Poisson structures on Lie groups},
   note={Thesis (Ph.D.)--University of California, Berkeley},
   publisher={ProQuest LLC, Ann Arbor, MI},
   date={1990},
   pages={74},
}

\bib{MX1994}{article}{
   author={Mackenzie, K.C.H.},
    author={Xu, P.},
   title={Lie bialgebroids and Poisson groupoids},
  journal={Duke  Math. J.},
  date={1994},
  volume={18},
  number={4},
  pages={415--452},
}

\bib{DFT-MSS}{article}{
   author={Mori, H.},
   author={Sasaki, S.},
   author={Shiozawa, K.},
   title={Doubled aspects of Vaisman algebroid and gauge symmetry in double
   field theory},
   journal={J. Math. Phys.},
   volume={61},
   date={2020},
   number={1},
   pages={013505, 29},
   issn={0022-2488},
}

 \bib{Roy1999}{article}{
  author={Roytenberg, D.},
  title={Courant algebroids, derived brackets and even symplectic supermanifolds},
  school={University of California, Berkeley},
  journal={ PhD Thesis,
  	University of California, Berkeley,  arXiv:math/9910078},
  date={1999},
  pages={103 pp},
}

\bib{Roy2001}{article}{
   author={Roytenberg, D.},
   title={On the structure of graded symplectic supermanifolds and Courant
   algebroids},
   conference={
      title={Quantization, Poisson brackets and beyond},
      address={Manchester},
      date={2001},
   },
   book={
      series={Contemp. Math.},
      volume={315},
      publisher={Amer. Math. Soc., Providence, RI},
   },
   date={2002},
   pages={169--185},
}
		
\bib{Roy2002}{article}{
   author={Roytenberg, D.},
   title={Quasi-Lie bialgebroids and twisted Poisson manifolds},
   journal={Lett. Math. Phys.},
   volume={61},
   date={2002},
   number={2},
   pages={123--137},
   issn={0377-9017},
}

\bib{Severaletter}{article}{
author = {\u{S}evera, P.},
title = {Letters to Alan Weinstein about Courant algebroids},
eprint = {1707.00265},
date = {2017},
}

\bib{Vaisman2012}{article}{
   author={Vaisman, I.},
   title={On the geometry of double field theory},
   journal={J. Math. Phys.},
   volume={53},
   date={2012},
   number={3},
   pages={033509, 21},
   issn={0022-2488},
}
		
  \end{biblist}
\end{bibdiv}
\end{document}